\definecolor{darkblue}{rgb}{0.0,0,0.7} 
\newcommand{\darkblue}{\color{darkblue}} 
\definecolor{lightblue}{rgb}{0,135,147}
\newcommand{\lightblue}{\color{lightblue}}
\definecolor{red}{rgb}{1,0,0}
\definecolor{darkred}{rgb}{0.7,0,0} 
\newcommand{\darkred}{\color{darkred}} 
\definecolor{lightgrey}{rgb}{0.7,0.7,0.7} 
\newcommand{\stepl}{ -- ++(-1,0)}
\newcommand{\stepu}{ -- ++(0,1)}
\newcommand{\stepd}{ -- ++(0,-1)}
\newdimen\squaresize \squaresize=12pt
\def\lb{\lightblue{\vrule height\squaresize width\squaresize}}
\newtheorem{theorem}{Theorem}[section]
\newtheorem{proposition}[theorem]{Proposition}
\newtheorem{corollary}[theorem]{Corollary}
\newtheorem{lemma}[theorem]{Lemma}
\theoremstyle{definition}
\newtheorem{definition}[theorem]{Definition}
\newtheorem{example}[theorem]{Example}
\newtheorem{openproblem}[theorem]{Open Problem}
\newtheorem{remark}[theorem]{Remark}
\numberwithin{equation}{section}
\renewcommand{\L}{\mathcal{L}}
\newcommand{\R}{\mathcal{R}}
\newcommand{\defn}[1]{\emph{\darkred #1}} 
\newcommand{\des}{{\sf des}}
\newcommand{\Red}{{{\sf Red}}}
\newcommand{\ShSYT}{{{\sf ShSYT}}}
\newcommand{\rSYT}{{{\sf rSYT}}}
\newcommand{\hrSYT}{{{\sf hrSYT}}}
\newcommand{\rW}{{{\sf rW}}}
\newcommand{\w}{\mathbf{w}}
\title[Braid Moves in Commutation Classes]
  {Braid Moves in Commutation Classes of the Symmetric Group}
\author[A.~Schilling]{Anne Schilling}
\address[A. Schilling]{Department of Mathematics, UC Davis, One Shields Ave., Davis, CA 95616-8633, U.S.A.}
\email{anne@math.ucdavis.edu}
\author[N.~Thi\'ery]{Nicolas M. Thi\'ery}
\address[N. Thi\'ery]{Univ Paris-Sud, Laboratoire de Recherche en Informatique,
  Orsay, F-91405; CNRS, Orsay, F-91405, France}
\email{Nicolas.Thiery@u-psud.fr}
\author[G.~White]{Graham White}
\address[G. White]{Department of Mathematics, Stanford University, 450 Serra Mall, Bldg. 380, 
  Stanford, CA 94305-2125, U.S.A.}
\email{grwhite@math.stanford.edu}
\author[N.~Williams]{Nathan Williams}
\address[N. Williams]{LaCIM, Universit\'e de Qu\'ebec \`a Montr\'eal, Montr\'eal (Qu\'ebec), Canada; New address: Department of Mathematics, UC Santa Barbara, Santa Barbara, CA 93106, U.S.A.}
\email{nathan.f.williams@gmail.com}
\date{\today}
\keywords{}
\subjclass[2000]{Primary 05E45; Secondary 20F55, 13F60}
\begin{document}

\begin{abstract}
We prove that the expected number of braid moves in the commutation class of the reduced word 
$(s_1 s_2 \cdots s_{n-1})(s_1 s_2 \cdots s_{n-2}) \cdots (s_1 s_2)(s_1)$ for the long element in the 
symmetric group $\mathfrak{S}_n$ is one. This is a variant of a similar result by V.~Reiner, who proved 
that the expected number of braid moves in a random reduced word for the long element is one. The proof
is bijective and uses X.~Viennot's theory of heaps and variants of the promotion operator. In addition, we provide
a refinement of this result on orbits under the action of even and odd promotion operators. This gives an example 
of a homomesy for a nonabelian (dihedral) group that is not induced by an abelian subgroup. Our techniques 
extend to more general posets and to other statistics.
\end{abstract}

\maketitle

\section{Introduction}

\subsection{Reduced Words and Standard Tableaux}
Fix the \defn{symmetric group} $\mathfrak{S}_n$ and its generating set of \defn{simple transpositions} 
$S := \{s_i \mid 1\le i <n \}$. The simple transpositions $s_i := (i,i+1)$ satisfy the quadratic relations $s_i^2 = 1$, 
the \defn{commutations} $s_i s_j = s_j s_i$ for $|i-j|>1$, and the \defn{braid moves}
\[
	s_i s_{i+1} s_i = s_{i+1} s_i s_{i+1} \quad \text{for $1\le i\le n-2$.}
\]
The \defn{length} $\ell(w)$ of an element $w \in \mathfrak{S}_n$ is the smallest nonnegative integer $\ell$ for 
which there exists an expression $w = s_{i_1}s_{i_2}\cdots s_{i_\ell}$.  The symmetric group $\mathfrak{S}_n$ 
has a \defn{longest element} $w_0$, whose length is $\ell(w_0) = N := \frac{n(n-1)}{2}.$

If $w \in \mathfrak{S}_n$ can be written as a product of generators $w=s_{i_1}s_{i_2}\cdots s_{i_\ell}$, then 
$\mathbf{w} = i_1 \; i_2 \; \ldots \; i_\ell$ is a \defn{word} for $w$. If the length $\ell$ of the word $\mathbf{w}$ is 
equal to $\ell(w)$, then $\mathbf{w}$ is a \defn{reduced word}.
We may refer to the product of generators $s_{i_1}s_{i_2}\cdots s_{i_{\ell(w)}}$ as a reduced expression or reduced 
word for $w$. By Matsumoto's theorem, the set of reduced words for $w$ form a connected graph $\Red(w)$ with 
edges given by commutation and braid moves. Figure~\ref{fig:matsumoto_graph_S4} illustrates the graph $\Red(w_0)$ 
for $\mathfrak{S}_4$. 

\begin{figure}[t]
	\begin{center}
		\includegraphics[height=3in]{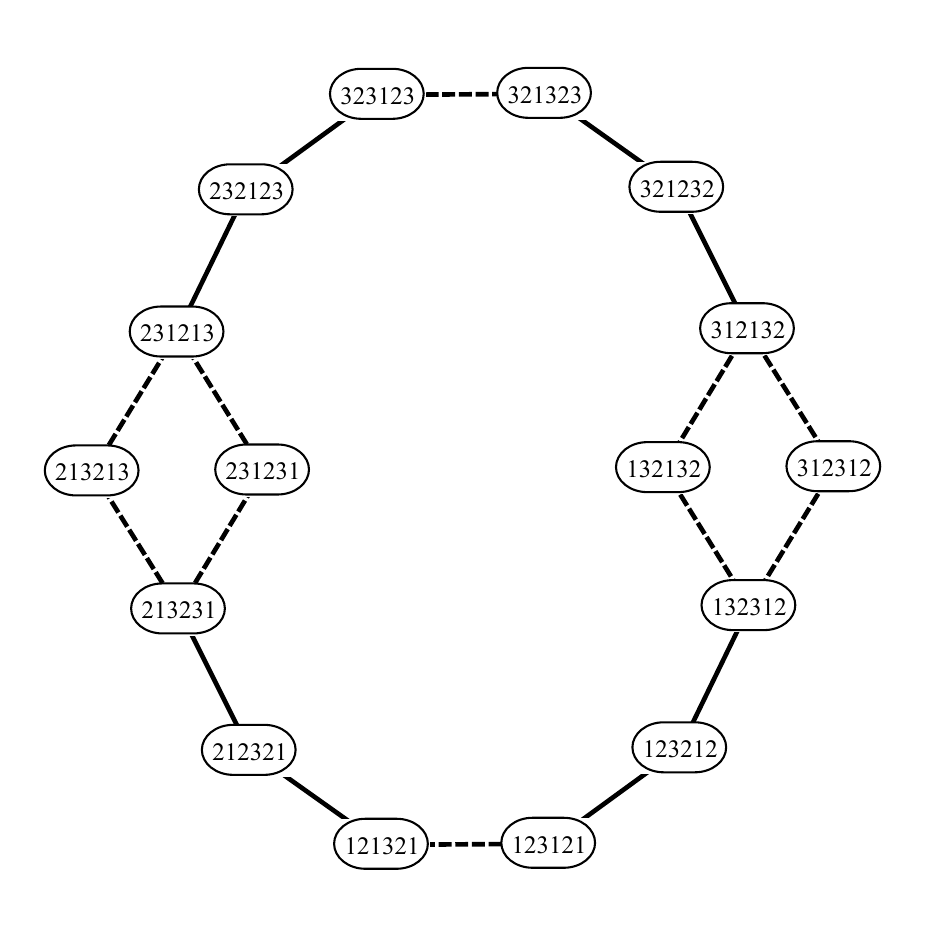}
	\end{center}
\caption{The 16 reduced words in $\Red(w_0)$ for $\mathfrak{S}_4$.  Solid lines denote braid relations, 
while dotted lines indicate commutation relations.}
\label{fig:matsumoto_graph_S4}
\end{figure}

It is natural to ask how many edges in this graph correspond to braid moves.  In~\cite{reiner.2005}, V.~Reiner 
proved the following striking theorem, relating the number of such edges to the number of vertices.

\begin{theorem}[V. Reiner~\cite{reiner.2005}]
	The expected number of braid moves for a reduced word for $w_0 \in \mathfrak{S}_n$ is one.
\label{thm:braid_moves_reiner}
\end{theorem}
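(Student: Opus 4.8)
The plan is to evaluate the expectation by linearity and then reduce the theorem to a clean combinatorial identity. Write $b(\mathbf{w})$ for the number of positions of a reduced word $\mathbf{w}$ at which a braid move applies, and set $V = |\Red(w_0)|$. Since every braid move is an edge of $\Red(w_0)$ joining two reduced words and accounts for exactly one braid position in each of its two endpoints, the pairs (reduced word, braid position) are in bijection with (braid edge, endpoint), so $\sum_{\mathbf{w}} b(\mathbf{w}) = 2E$ where $E$ is the number of braid edges. The expected value is $\frac{1}{V}\sum_{\mathbf{w}} b(\mathbf{w}) = \frac{2E}{V}$, so the theorem is equivalent to the statement that the number of braid edges is exactly half the number of reduced words. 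First I would fix the classical bijection between $\Red(w_0)$ and the standard Young tableaux of staircase shape $(n-1,n-2,\dots,1)$, so that $V$ and, eventually, $E$ may be handled by the hook-length formula.

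To organize the count I would pass to the wiring-diagram (sorting-network) model, in which a reduced word for $w_0$ becomes an arrangement of $n$ wires that pairwise cross exactly once, and a braid move is precisely a triangular face of the arrangement, bounded by the three mutual crossings of some triple of wires. Linearity of expectation then reads
\[
	\frac{1}{V}\sum_{\mathbf{w}} b(\mathbf{w}) = \sum_{1 \le a < b < c \le n} \Pr\bigl[\{a,b,c\}\ \text{bounds a triangular face}\bigr],
\]
a sum over the $\binom{n}{3}$ triples of wires. The key observation is that any three wires, taken in isolation, reverse their relative order and so realize one of the two reduced words of the longest element of $\mathfrak{S}_3$, each of which is a single braid triangle; hence a triple fails to bound a triangular face exactly when crossings of the remaining $n-3$ wires are interleaved among its three mutual crossings. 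The problem is thus reduced to computing, for each triple, the probability that no such interleaving occurs.

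The hard part will be summing these probabilities to exactly one. They are not individually equal — under the diagram symmetry $s_i \mapsto s_{n-i}$ together with word reversal the triples split into several classes with genuinely different probabilities — so I do not expect a term-by-term evaluation to succeed. Instead I would seek a global collapse: either a sign-reversing involution matching the interleaved configurations of one triple against the triangular configurations of another, or an algebraic identity got by writing each probability in terms of counts of reduced factorizations $w_0 = u\,(s_i s_{i+1} s_i)\,v$ and telescoping the resulting sum against the staircase hook-length formula. The route most in keeping with the present paper is to transport the triangular-face statistic across the bijection to staircase tableaux and to exploit the promotion and evacuation symmetries there, under which the total is forced to collapse to the single value $1$; making that collapse precise is the crux of the whole argument.
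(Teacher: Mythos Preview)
Your proposal sets up the problem correctly but leaves the decisive step undone: you explicitly say that ``making that collapse precise is the crux of the whole argument'' without carrying it out. Everything before that point is a reformulation, not a proof. Moreover, the decomposition you choose---summing over triples of wires---actively works against you: as you note, the triangle probabilities genuinely depend on the triple, so there is no symmetry to exploit at that level, and you are left hoping for a cancellation you cannot produce.

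Reiner's argument, as sketched in the paper, decomposes by \emph{position} rather than by triple. The cyclic action $s_{i_N}\cdots s_{i_2}s_{i_1}\mapsto s_{n-i_1}s_{i_N}\cdots s_{i_2}$ is a bijection on $\Red(w_0)$ that shifts a braid at positions $(j,j+1,j+2)$ to one at $(j-1,j,j+1)$; hence the number of reduced words with a braid at a given position is independent of the position, and $\sum_{\mathbf{w}} b(\mathbf{w}) = N \cdot \#\{\mathbf{w}: \text{braid at positions }1,2,3\}$. Under the Edelman--Greene bijection this cyclic action becomes promotion on staircase SYT, and a braid at positions $1,2,3$ becomes a \emph{standard braid hook}: the entries $1,2,3$ occupying a $(2,1)$ shape touching the diagonal. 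Excising those three cells leaves a smaller staircase shape, so the count is an explicit sum of hook-length quotients that evaluates to $V/N$. This gives $\sum_{\mathbf{w}} b(\mathbf{w}) = V$ directly. The step you are missing is precisely this use of cyclic symmetry to collapse $N$ positions to one, followed by the excision computation; your wiring-diagram setup, while valid, discards the symmetry that makes the sum tractable.
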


In other words, there are $\frac{1}{2}|\Red(w_0)|$ edges that correspond to braid moves in the graph $\Red(w_0)$.  

V.~Reiner's proof relies on P.~Edelman and C.~Greene's equivariant bijection~\cite{edelmann.greene.1987} between 
reduced words for $w_0$ under the action
\[
	s_{i_N}\cdots s_{i_2} s_{i_1} \mapsto s_{n-i_1} s_{i_N}\cdots s_{i_2}
\]
and standard Young tableaux (SYT) of staircase shape $(n-1,n-2,\ldots,1)$ under promotion.  Briefly, he rotates 
the desired braid move to the beginning of the reduced word, so that under the bijection to SYT the braid move 
is sent to a \defn{standard braid hook}---three cells arranged in the shape $(2,1)$, touching the diagonal, and
labeled by consecutive numbers $i-1,i,i+1$.  By excising these three cells, it is possible to compute the 
desired quantity as an explicit summation of a quotient of hook-length formulas.

\subsection{Commutation Classes and Right-Justified Tableaux}
Given a reduced word $\mathbf{w}$ for $w\in \mathfrak{S}_n$, we can form the subgraph $\Red(\mathbf{w})$ 
of $\Red(w)$ containing $\mathbf{w}$ and all reduced words connected to $\mathbf{w}$ using only commutations;
 this is called the \defn{commutation class} of $\mathbf{w}.$  We may now ask for the number of edges emanating 
from this subgraph (which, by construction, necessarily correspond to braid moves).  
Figure~\ref{fig:matsumoto_graph_commutation_class_S5} illustrates an example of such a subgraph for 
$\mathfrak{S}_5$.

In general, it is unreasonable to expect as tidy an answer as the one given in Theorem~\ref{thm:braid_moves_reiner}.  
In fact, the expected number of braid moves is not equal to one on arbitrary commutation
classes (this is already evident in Figure~\ref{fig:matsumoto_graph_S4}). However, there is a special
commutation class where this is true, as stated in the following attractive specialization of our main result.

\begin{figure}[htbp]
	\begin{center}
		\includegraphics[width=\textwidth]{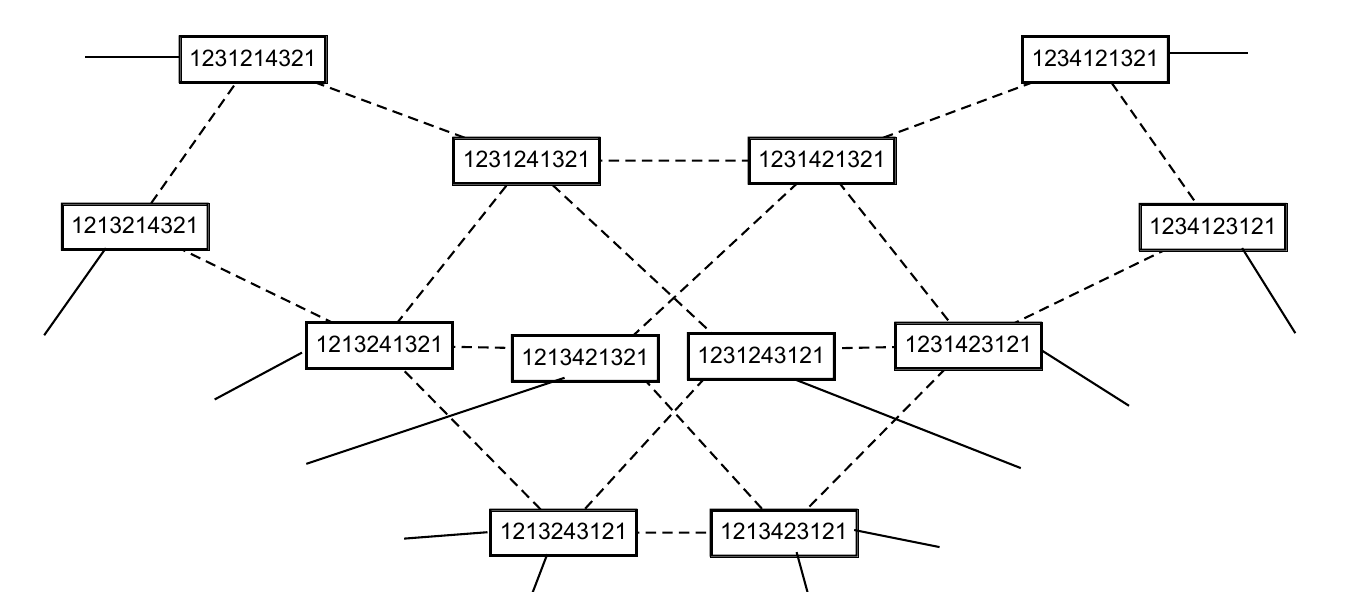}
	\end{center}
\caption{The 12 reduced words in the commutation class $\Red(\mathbf{w_0})$ for $\mathbf{w_0} = (s_1s_2s_3s_4)(s_1s_2s_3)(s_1s_2)(s_1).$  Solid lines denote braid relations leaving the commutation class.}
\label{fig:matsumoto_graph_commutation_class_S5}
\end{figure}

\begin{theorem}
	The expected number of braid moves for a reduced word in the commutation class of the word 
	$\mathbf{w_0} := (s_1 s_2 \cdots s_{n-1})(s_1 s_2 \cdots s_{n-2})\cdots(s_1 s_2)(s_1)$ 
	in $\mathfrak{S}_n$ is one.
\label{thm:braid_moves_commutation_class}
\end{theorem}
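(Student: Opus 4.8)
The plan is to translate the statement into X.~Viennot's language of heaps and reduce it to a counting identity. Reading a reduced word from left to right and dropping a box labeled $s_i$ into column $i$ (stacking it above every earlier box in columns $i-1,i,i+1$) produces a labeled poset, and Viennot's theory says two reduced words are commutation-equivalent precisely when they yield the same heap; thus I would identify $\Red(\mathbf{w_0})$ with the set $\mathcal{L}(P_n)$ of linear extensions of a single heap poset $P_n$, commutation moves becoming transpositions of incomparable neighbors. A direct computation of the covering relations shows $P_n$ is the ``zipped staircase'': column $i$ is a chain of $n-i$ boxes labeled $s_i$, consecutive columns interleave, and the rank sizes are palindromic ($1,1,2,2,\dots,2,1,1$), mirroring the diagram symmetry $s_i \mapsto s_{n-i}$ of $\mathbf{w_0}$. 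Because commutations preserve the multiset of generators while a braid move changes it (two $s_i$'s and one $s_{i+1}$ become one $s_i$ and two $s_{i+1}$'s), every braid move leaves the commutation class; hence, writing $\beta(L)$ for the number of positions $j$ at which the word $L$ reads a braid triple $s_a s_{a\pm 1} s_a$, the quantity $\sum_{L}\beta(L)$ counts exactly the braid edges emanating from the class, and the theorem becomes the assertion that
\[
  \sum_{L \in \mathcal{L}(P_n)} \beta(L) \;=\; |\mathcal{L}(P_n)|,
\]
that is, that $\beta$ has average one. A short argument shows each such $\beta$-position is a saturated $3$-chain $x \lessdot y \lessdot z$ of $P_n$ occupying three consecutive ranks of $L$, so $\beta$ is genuinely a statistic on $\mathcal{L}(P_n)$.

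To make this average visible I would follow the spirit of V.~Reiner's proof of Theorem~\ref{thm:braid_moves_reiner}, but work intrinsically on $\mathcal{L}(P_n)$ using the Bender--Knuth-type toggles $\tau_1,\dots,\tau_{N-1}$, where $\tau_j$ interchanges the boxes of ranks $j$ and $j+1$ when they are incomparable in $P_n$ and fixes $L$ otherwise. Since $\tau_j$ and $\tau_k$ commute whenever $|j-k|\ge 2$, the products $\partial_{\mathrm{odd}} = \prod_{j \text{ odd}}\tau_j$ and $\partial_{\mathrm{even}} = \prod_{j \text{ even}}\tau_j$ are well-defined involutions; these will serve as the even and odd promotion operators of the statement, their composite $g = \partial_{\mathrm{even}}\partial_{\mathrm{odd}}$ is gyration, and together they generate a dihedral group $D$ acting on $\mathcal{L}(P_n)$. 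The target is then the sharper homomesy statement that $\beta$ averages to one on every $D$-orbit $O$, i.e. $\sum_{L \in O}\beta(L) = |O|$; summing over orbits recovers the displayed identity, while the orbit-wise refinement is precisely the promised instance of homomesy for a nonabelian group.

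The engine of the proof is a local analysis of how a single toggle changes $\beta$. Since a $\beta$-position is a saturated $3$-chain sitting at three consecutive ranks, the toggle $\tau_j$ can create or destroy only those braid configurations meeting ranks $j-1,\dots,j+2$, and away from this window every configuration is preserved; I would classify the finitely many local rank patterns to read off $\beta(\tau_j L) - \beta(L)$ exactly. The aim is to package these local contributions either as a family of toggleability statistics (each homomesic with a computable constant, in the style of Striker) or, equivalently, to realize $\beta$ as one plus a coboundary $h - h\circ g$ for the gyration $g$ and an explicit potential $h$ on $\mathcal{L}(P_n)$; telescoping around a $g$-orbit, and hence around a $D$-orbit, then yields the constant average one.

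The main obstacle is the boundary bookkeeping. Toggles acting in the interior merely permute braid configurations and contribute nothing to the average, so the entire content lies in the creation and destruction of braid configurations touching the extreme ranks under $\partial_{\mathrm{odd}}$ and $\partial_{\mathrm{even}}$; proving that these exactly balance to give the constant one on every orbit---rather than only in the global sum---is the delicate step, and I expect the palindromic rank structure of $P_n$ and the self-duality coming from $s_i \mapsto s_{n-i}$ to be what forces the local cancellation to close up. Two subsidiary points must also be secured: that $P_n$ carries enough symmetry for gyration and promotion to be honest bijections of $\mathcal{L}(P_n)$, and that the even/odd pair generates a genuinely dihedral (nonabelian) rather than merely cyclic action, so that the refinement is truly a nonabelian homomesy and not a repackaged cyclic one.
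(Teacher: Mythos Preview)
Your high-level framework---pass to linear extensions of the heap poset via Viennot, act by the even/odd toggle involutions $\tau_e,\tau_o$, and aim for homomesy of the braid statistic under the dihedral group $\langle \tau_o,\tau_e\rangle$---is exactly the setup of the paper's Section~\ref{sec:even_odd_homomesy}, and the paper does prove the theorem this way (Theorem~\ref{theorem.homomesy words}). But the mechanism you sketch is not the one that makes it go, and a couple of your structural claims are off.

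First, you never isolate the key structural fact, proved in the paper as Lemma~\ref{lemma.up down braid}: in this commutation class the \emph{only} possible braid move is $s_1 s_2 s_1$, so braid configurations live only on the diagonal of the shifted staircase. Your description of a braid as ``a saturated $3$-chain $x\lessdot y\lessdot z$ occupying three consecutive ranks'' is far too permissive; most such chains in $P_n$ sit in the interior and can never be braids, because the missing fourth box of the $2\times 2$ square forces a contradiction with standardness. Without this lemma your local toggle analysis would have to track phantom interior braids that do not exist, and your heuristic that ``toggles acting in the interior merely permute braid configurations'' has no content. Relatedly, the ``diagram symmetry $s_i\mapsto s_{n-i}$'' you invoke is not a symmetry of $\mathbf{w_0}$ or of its heap and plays no role; what the paper uses is the boundary condition $\lambda_1>\lambda_2$ and $\lambda_\ell=1$ (equivalently $\w_1\le\w_3$ and $\w_{N-2}\ge\w_N$), not any palindromic self-duality.

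Second, the paper does \emph{not} realize the braid statistic as ``one plus a coboundary $h-h\circ g$'' for gyration, nor via Striker-style toggleability. In fact the paper remarks that the statistic is in general not homomesic for the cyclic subgroup generated by $g=\tau_o\tau_e$, so a telescoping argument along $g$-orbits alone cannot work. Instead the paper builds an explicit orbit-preserving bijection
\[
  \Phi(k,\w)=\w.\tau_{o(k-2)}\cdots\tau_{o(1)}
\]
from (braid, word) pairs to words, and proves injectivity/surjectivity by a ``moving window'' argument (Lemmas~\ref{lemma.comparison} and~\ref{lemma.unique}): one tracks the sign of $c_i-a_i:=\w^{(i-2)}_{i+1}-\w^{(i-2)}_{i-1}$ as $i$ increases and shows it changes sign exactly once, at a braid. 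That monotonicity lemma is the real engine, and nothing in your proposal supplies it. (The paper's other, more elementary, proof in Section~\ref{section.right-justified} avoids homomesy entirely and instead shows that the promotion and inverse promotion paths $\mathcal L,\mathcal R$ on the tableau cross exactly once; this too hinges on the diagonal-only location of braids.)
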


We prove Theorem~\ref{thm:braid_moves_commutation_class} by providing a bijection from $\Red(\mathbf{w_0})$ 
to the set of all braid moves in elements of $\Red(\mathbf{w_0})$. 

In a similar spirit to V.~Reiner's translation of Theorem~\ref{thm:braid_moves_reiner} to a statement on standard 
tableaux, in Section~\ref{section.heaps} we use X.~Viennot's theory of heaps~\cite{viennot.1986}
to rephrase Theorem~\ref{thm:braid_moves_commutation_class} as a statement on \defn{shifted tableaux}.  This 
bijection is illustrated in Figure~\ref{fig:matsumoto_graph_shifted_tableaux_S5}.
We define a \defn{braid hook} to be a collection of three boundary cells arranged in the shifted shape $(2,1)$, labeled 
by consecutive numbers $i-1,i,i+1$ (see Definition~\ref{definition.braid hook}).  In this language, 
Theorem~\ref{thm:braid_moves_commutation_class} becomes the following statement.

\begin{theorem}
	The expected number of braid hooks in a shifted SYT of staircase shape is one.
\label{thm:braid_hooks}
\end{theorem}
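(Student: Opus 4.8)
The plan is to prove the equivalent counting identity that the total number of braid hooks, summed over all shifted SYT of staircase shape $(n-1,n-2,\dots,1)$, equals the number of such tableaux. By the heap correspondence of Section~\ref{section.heaps}, which matches $\Red(\mathbf{w_0})$ with shifted staircase SYT and braid moves leaving the commutation class with braid hooks (Definition~\ref{definition.braid hook}), this is exactly the bijection promised after Theorem~\ref{thm:braid_moves_commutation_class}. Following the template of Reiner's proof of Theorem~\ref{thm:braid_moves_reiner}, the engine should be a promotion operator $\rho$ on shifted staircase SYT, transported through the heap from a cyclic rotation $s_{i_N}\cdots s_{i_2}s_{i_1}\mapsto s_{n-i_1}s_{i_N}\cdots s_{i_2}$ of reduced words inside the commutation class. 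First I would realize $\rho$ explicitly as a composition of shifted jeu-de-taquin (toggle) moves, verify that it maps shifted staircase SYT to shifted staircase SYT, and determine its orbit structure. The shifted diagonal forces a parity distinction in how the slides terminate, and this is exactly what should produce the even and odd promotion operators and the resulting dihedral action underlying the homomesy refinement.

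The heart of the argument is to understand how a braid hook behaves under $\rho$. The key lemma I would aim for says that $\rho$ carries braid hooks to braid hooks except for a single controlled boundary event, so that every braid hook $h$ of a tableau $T$ can be rotated, after a uniquely determined number of steps, into a \emph{standard} braid hook: the three cells nearest the top of the diagonal, bearing the smallest labels. This reduces the count of all (tableau, braid hook) pairs to the count of standard braid hooks seen across promotion orbits, which is precisely Reiner's reduction carried into the shifted setting. Rather than finishing with his summation of shifted hook-length quotients, I would try to build the correspondence directly: use promotion to locate a canonical braid move associated to each shifted staircase SYT $T$, and conversely rotate the braid hook of a given braid move back to standard position to recover the tableau it came from, with the number of promotions recorded by the configuration. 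Assembling this should yield a bijection from $\Red(\mathbf{w_0})$ to the set of braid moves in $\Red(\mathbf{w_0})$, hence Theorem~\ref{thm:braid_hooks}. The same bookkeeping, split according to the even and odd promotion operators, should give the orbit-wise statement that the number of braid hooks appearing over a $\rho$-orbit equals the size of that orbit, i.e.\ that the braid-hook statistic is homomesic with average one.

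I expect the main obstacle to be the key lemma of the second paragraph: precisely controlling the creation and destruction of braid hooks under promotion, especially where a braid hook touches the shifted diagonal, since there the jeu-de-taquin slides and the even/odd parity interact most delicately. The bulk of the work should be in showing that exactly one standard braid hook is produced per rotation through the canonical position, and that the resulting assignment is a genuine bijection rather than merely a size-preserving correspondence. The heap language should make these local moves transparent enough to verify case by case, and as an independent consistency check one can always fall back on Reiner's computation, carrying out the excision at the level of the shifted hook-length formula and confirming that the expected number of braid hooks telescopes to one.
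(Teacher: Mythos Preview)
Your plan has a genuine gap at its foundation. The cyclic rotation $s_{i_N}\cdots s_{i_2}s_{i_1}\mapsto s_{n-i_1}s_{i_N}\cdots s_{i_2}$ is the Edelman--Greene action on \emph{all} reduced words for $w_0$, and it does \emph{not} preserve the commutation class $\Red(\mathbf{w_0})$. Already for $n=4$: rotating $\mathbf{w_0}=123121$ gives $312312$, whose letter multiset $\{1,1,2,2,3,3\}$ differs from that of $\mathbf{w_0}$, so the two words cannot lie in the same commutation class. Consequently there is no operator on shifted staircase SYT that is ``transported through the heap'' from that rotation, and no mechanism that moves a braid hook one step closer to a standard position with each application. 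The promotion operator $\partial=\tau_1\cdots\tau_{N-1}$ \emph{is} well-defined on $\ShSYT(\Delta_n)$ as linear extensions, but it does not act by cyclic shift on the underlying word, does not have order $N$, and does not carry a braid hook at value $k$ to one at value $k-1$; Reiner's reduction to a single standard braid hook simply does not survive the passage to a fixed heap.

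The paper's argument avoids this obstacle entirely. Rather than rotating a braid hook to the top, it assigns to a braid hook $k$ in $t$ the tableau $\varphi(k,t)=t.\partial^*_k\partial_k$, using only \emph{partial} promotion and inverse promotion. The inverse of $\varphi$ is read off from the single tableau $t$ by superimposing its promotion path $\mathcal{L}$ and inverse promotion path $\mathcal{R}$: two forbidden $2\times2$ configurations force these paths to cross exactly once, at a left inner corner, and that corner locates the unique $(k,t')$ with $\varphi(k,t')=t$. No orbit structure, no rotation to a canonical position, and no excision are needed. If you want to salvage your approach, the right starting point is not Reiner's rotation but the paper's pair of sliding paths; the homomesy refinement under $\langle\tau_o,\tau_e\rangle$ (Section~\ref{sec:even_odd_homomesy}) is proved separately, again without the cyclic picture you are envisioning.
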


\begin{figure}[htbp]
	\begin{center}
		\includegraphics[width=\textwidth]{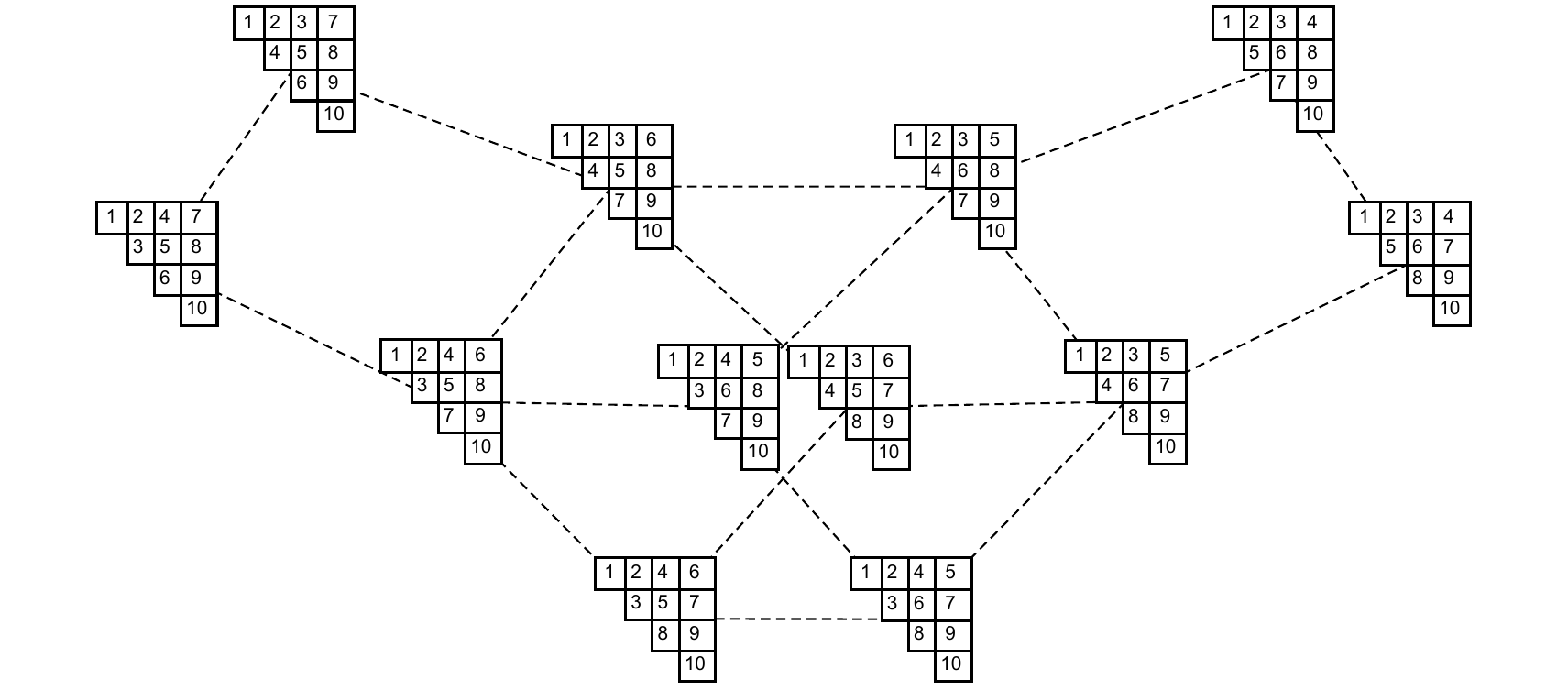}
	\end{center}
\caption{The 12 shifted SYT of staircase shape $(4,3,2,1)$.  Compare with Figure~\ref{fig:matsumoto_graph_commutation_class_S5}.}
\label{fig:matsumoto_graph_shifted_tableaux_S5}
\end{figure}

In fact, we conclude Theorem~\ref{thm:braid_hooks} as a corollary of the much more general 
Theorem~\ref{thm:braid_moves_young_tableaux}, which applies to a certain class of \defn{right-justified} tableaux 
that contain the shifted staircases as a special case.

\subsection{Half-Right-Justified Tableaux}
Recall that the \defn{hyperoctahedral group} $B_n$ is the group generated by $\{s_i\}_{i=1}^{n-1}$ 
(where now $s_i = (i,i+1)(-i,-i-1)$), along with the generator $s_0 := (1,-1)$.  In addition to commutations and 
braid moves, the hyperoctahedral group also satisfies the \defn{long braid move}
\[ 
	s_0 s_1 s_0 s_1 = s_1 s_0 s_1 s_0\;.
\]
Elements in $B_n$ can be represented as signed permutations. Note that the reduced words of the signed
permutation $(-(n-1),-(n-2),\ldots, -2,-1) \in B_{n-1}$ are precisely the same (up to a shift by 1) as the reduced words
in the commutation class of $\mathbf{w_0}$ in Theorem~\ref{thm:braid_moves_commutation_class} (this follows
from Lemma~\ref{lemma.up down braid} below).

M.~Haiman~\cite{haiman.1992} proved that reduced words for the longest element $w_0$ in type $B_n$ are 
equinumerous with SYT of shifted trapezoidal shape.  W.~Kra\'skiewicz~\cite{kraskiewicz.1989} gave an explicit 
insertion procedure, which was used by S.~Billey and T.K.~Lam~\cite{billey_lam.1998} to give an interpretation 
in terms of pattern avoidance and a link to Stanley symmetric functions. Similarly to the case of $\mathfrak{S}_n$ 
and SYT of staircase shape, promotion on shifted trapezoids corresponds to the action
\[  
	s_{i_1}s_{i_2}\cdots s_{i_N} \mapsto s_{i_2}\cdots s_{i_N}s_{i_1}
\]
on $\Red(w_0)$, where $N$ is now the length of the longest element $w_0$ in type $B_n$.  
Using this technology and a similar method to that in~\cite{reiner.2005}, B.~Tenner~\cite{tenner.2007}
proved a type $B$ analogue of Theorem~\ref{thm:braid_moves_reiner}.

\begin{theorem}[B.~Tenner~\cite{tenner.2007}]
	The expected number of braid moves in $\Red(w_0)$ in type $B_n$ is $2-4/n$.  The expected number 
	of long braid moves is $\frac{2}{n^2-2}$.
\label{thm:tenner}
\end{theorem}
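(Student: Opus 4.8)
The plan is to run V.~Reiner's promotion argument in type $B$, substituting M.~Haiman's bijection for the Edelman--Greene correspondence. Write $N=\ell(w_0)=n^2$ and $R=|\Red(w_0)|$. The decisive structural fact is that in type $B_n$ the longest element $w_0=-\mathrm{id}$ is \emph{central}, so (unlike type $A$, where one must twist by $i\mapsto n-i$) the pure cyclic rotation $\rho\colon s_{i_1}\cdots s_{i_N}\mapsto s_{i_2}\cdots s_{i_N}s_{i_1}$ sends reduced words of $w_0$ to reduced words of $w_0$ and is a bijection of $\Red(w_0)$. For each $j$, the power $\rho^{\,j-1}$ moves the letters in positions $j,j+1,j+2$ to the front without wrap-around, so it restricts to a bijection between the words having an ordinary braid move at position $j$ and those having one at the front; summing the resulting equalities $\#\{\text{braid at }j\}=\#\{\text{front braid}\}$ over $j=1,\dots,N-2$ gives
\[
  \mathbb{E}[\#\,\text{braid moves}]=\frac{N-2}{R}\,\#\{\mathbf{w}:(i_1,i_2,i_3)\text{ is an ordinary braid}\},
\]
and identically $\mathbb{E}[\#\,\text{long braid moves}]=\tfrac{N-3}{R}\,\#\{\mathbf{w}:(i_1,i_2,i_3,i_4)\text{ is a long braid}\}$, the length-four relation forcing the range $j\le N-3$. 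Thus the whole problem reduces to counting reduced words with a prescribed pattern in their first letters.

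Next I would transport the two front-pattern counts to shifted tableaux. By Haiman's theorem $\Red(w_0)$ is in bijection with the shifted standard Young tableaux $\ShSYT(\delta)$ of the trapezoidal shape $\delta$ with $|\delta|=N$, and by the displayed equivariance $\rho$ corresponds to promotion $\partial$. Under this dictionary a front ordinary braid $s_i s_{i\pm1}s_i$ with $i\ge 1$ should be carried to a shifted \emph{braid hook}---three boundary cells of shape $(2,1)$ with consecutive labels, anchored at the position determined by $i$---exactly as Reiner's front braids become standard braid hooks on the diagonal. A front long braid, being the length-four relation $s_0 s_1 s_0 s_1=s_1 s_0 s_1 s_0$ built from the special generator $s_0$, should be carried to a distinguished four-cell configuration pinned to the first column, recording the interaction of $s_0$ and $s_1$. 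Making these two correspondences precise---and in particular verifying that promotion sends each pattern to a single, well-localized subshape---is most naturally done through X.~Viennot's heaps or a direct reading of W.~Kraskiewicz's insertion.

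With each expectation written as $\tfrac{1}{R}\sum_{T}(\text{number of admissible configurations of }T)$, I would evaluate the sums by excision and the shifted hook-length formula. For a fixed admissible location, deleting the three (respectively four) cells of the configuration together with their forced entries leaves a shifted (possibly skew) shape whose standard fillings are again enumerated by a product of hook lengths; the contribution of that location is the ratio of this count to $|\ShSYT(\delta)|=R$. Summing the ratios over all admissible locations and simplifying should produce the closed forms $2-4/n$ for ordinary braid moves and $\tfrac{2}{n^2-2}$ for long braid moves.

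The step I expect to be the main obstacle is this last one. Shifted excision interacts with the doubled hook lengths along the main diagonal, so the product formula obtained after removing a braid hook is markedly less clean than in the straight-shape type $A$ computation, and the sum over locations does not telescope as transparently. Moreover the long braid move has no type $A$ analogue, so there is no prior computation to imitate: one must first settle exactly which cells it excises and then perform a second, genuinely different hook-length summation. I expect that organizing both summations---perhaps by grouping the hook-length ratios according to which diagonal cell each configuration touches---is where the real work lies, whereas the reduction to a front-pattern count and the translation to tableaux are comparatively formal.
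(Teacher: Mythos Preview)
The paper does not prove this theorem; it is stated as a result of B.~Tenner and cited to~\cite{tenner.2007}, with only the remark that Tenner used ``a similar method to that in~\cite{reiner.2005}''. There is therefore no proof in the paper to compare your proposal against.

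That said, your outline is precisely the Reiner-style argument the paper is alluding to: exploit that $w_0$ is central in $B_n$ so that pure cyclic rotation acts on $\Red(w_0)$, reduce to a front-pattern count, transport via Haiman's bijection to shifted trapezoidal tableaux (with rotation becoming promotion, as the paper notes just before the theorem), identify the front pattern with a localized cell configuration, excise, and sum hook-length ratios. Your diagnosis of where the real work lies---the shifted hook-length bookkeeping after excision, and the genuinely new long-braid case---is accurate.

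One caution: your suggestion that the translation of a front braid to a tableau configuration ``is most naturally done through X.~Viennot's heaps'' is misleading. The heap construction in Section~\ref{section.heaps} describes a single commutation class, not all of $\Red(w_0)$; Haiman's bijection to shifted trapezoids goes through Kraskiewicz insertion (or Haiman's mixed insertion), which is considerably more delicate than the heap picture. Pinning down exactly which cells a front $s_is_{i\pm1}s_i$ or $s_0s_1s_0s_1$ occupies requires working directly with that insertion and promotion, not with heaps.
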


By considering \defn{half-right-justified tableaux}, which are certain tableaux that can be paired with themselves to 
produce right-justified tableaux (see Figure~\ref{figure.half right}) and which include shifted trapezoidal shapes,
we provide a complementary result to Theorem~\ref{thm:tenner} in Section~\ref{sec:half_right_justified}.

\begin{theorem}
	 The expected number of braid hooks in a shifted SYT of trapezoidal shape is one half.
\label{thm:braid_moves_half_young_tableaux}
\end{theorem}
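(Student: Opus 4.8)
The plan is to deduce Theorem~\ref{thm:braid_moves_half_young_tableaux} from the general right-justified result Theorem~\ref{thm:braid_moves_young_tableaux} (whose staircase specialization is Theorem~\ref{thm:braid_hooks}) by exploiting the defining feature of half-right-justified tableaux: a shifted trapezoidal SYT can be glued to a reflected copy of itself to produce a right-justified SYT. Concretely, I would first make precise the doubling map $D$ that sends a shifted trapezoidal SYT $T$ to a right-justified SYT $D(T)$ on the doubled shape, where the trapezoid and its mirror image overlap along a fold axis; the doubled shape is a shifted staircase, hence is right-justified and covered by Theorem~\ref{thm:braid_moves_young_tableaux}. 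The target statement, that the expected number of braid hooks is $\tfrac12$, is equivalent to the counting identity
\[
  \sum_{T} \#\{\text{braid hooks in } T\} = \tfrac12\, \#\{T\},
\]
the sum ranging over all shifted trapezoidal SYT $T$; equivalently, I want a $2$-to-$1$ correspondence from trapezoidal SYT onto the braid hooks occurring among them.

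The crux is the interaction between $D$ and braid hooks. I would show that $D$ is a bijection from shifted trapezoidal SYT onto the reflection-symmetric right-justified SYT, so that every $D(T)$ carries the reflection involution $\iota$. Since $\iota$ acts on $D(T)$, its braid hooks are permuted by $\iota$: a braid hook lying off the fold axis is carried to a distinct mirror braid hook, while a braid hook straddling the axis is $\iota$-fixed. The factor of $\tfrac12$ is then forced by comparing the braid-hook count of $T$ with that of $D(T)$: a single braid hook of $T$ lifts to a two-element $\iota$-orbit in $D(T)$, so that $\#\{\text{braid hooks in } D(T)\}$ is, up to axis corrections, twice $\#\{\text{braid hooks in } T\}$. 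Evaluating the doubled-shape counts by Theorem~\ref{thm:braid_moves_young_tableaux} and transporting back through $D$ then yields the desired one half.

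The main obstacle is the bookkeeping along the fold axis, and it has two facets. First, Theorem~\ref{thm:braid_moves_young_tableaux} controls the total braid-hook count over \emph{all} right-justified SYT of the doubled shape, whereas $D$ only reaches the $\iota$-symmetric ones; I therefore need the bijection underlying Theorem~\ref{thm:braid_moves_young_tableaux} to be $\iota$-equivariant, so that it restricts to symmetric SYT and to $\iota$-fixed (axis) braid hooks. Second, the doubling can fix or create braid hooks lying exactly on the axis, and these do not come in mirror pairs; isolating their contribution and verifying that it is accounted for correctly—rather than spoiling the clean factor of two—is the delicate step. I expect that re-running the heap and promotion construction of Section~\ref{section.heaps} directly in the symmetric (folded) setting, tracking cells through the fold, will make both the equivariance and the axis analysis transparent and will pin down the constant as exactly $\tfrac12$.
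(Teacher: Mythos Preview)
Your plan has a genuine gap at its foundation: the reflection involution $\iota$ you need does not exist on the set of right-justified SYT of the doubled shape. Concretely, for the trapezoid $\Delta_2^t=(5,3,1)$ the doubled shape (as in Figure~\ref{figure.half right} and Example~\ref{example.staircase pair}) is the right-justified partition $(6,5,4,2,1)$, which is \emph{not} a shifted staircase and is \emph{not} self-conjugate under anti-diagonal reflection (its row lengths $(6,5,4,2,1)$ differ from its column lengths $(5,4,3,3,2,1)$). Thus $\dagger$ does not act on $\rSYT$ of the doubled shape, and there is no global involution swapping the two halves against which $\varphi$ could be equivariant. Your subsequent counting argument (pairing off braid hooks in $\iota$-orbits, restricting Theorem~\ref{thm:braid_moves_young_tableaux} to the symmetric locus) therefore cannot get started. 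Note too that even in a hypothetical self-conjugate doubling with $N=2|\lambda|$ cells, an $\iota$-fixed braid hook would sit at position $k=(N+1)/2$, which is not an integer; so fixed-point counting would force the symmetric locus to be empty, contradicting the existence of $(T,T^\dagger)$.

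The paper's proof shares your instinct to double but executes it very differently. It works entirely inside $\hrSYT(\lambda)$ with the injection $\psi(k,t)=t.\partial_k^*\partial_k$, and shows that $t$ lies in the image of $\psi$ exactly when the paths $\mathcal{L}$ and $\mathcal{R}$ cross in $t$. The halving then comes from \emph{evacuation}: one forms the staircase pair $(t,(t.\epsilon)^\dagger)$ --- crucially with the evacuation $\epsilon$, not the naive $(t,t^\dagger)$ --- and proves (Lemmas~\ref{lemma.pw1}--\ref{lemma.mid overlap}) that the promotion and inverse promotion paths of the staircase pair are the concatenations of those in the two halves. Since the pair is right-justified with $\lambda_1\ge\lambda_2+2$ and last part $1$, its paths cross exactly once and the crossing sits entirely in one half; translating back via $\dagger$, this says the paths cross in exactly one of $t$ and $t.\epsilon$. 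As $\epsilon$ is an involution on $\hrSYT(\lambda)$, this pairs the tableaux so that precisely half lie in the image of $\psi$. The missing ingredient in your outline is this use of evacuation to create the pairing; reflection alone does not do it.
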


We are not aware of an interpretation of braid hooks in shifted SYT of trapezoidal shapes in terms of the corresponding
reduced words.

\subsection{Homomesy}

In Section~\ref{sec:even_odd_homomesy}, we provide an independent bijective proof of 
Theorem~\ref{thm:braid_hooks} and its generalization Theorem~\ref{thm:braid_moves_young_tableaux} 
by refining the previous statements using \defn{homomesy}.
Homomesy was introduced by Panyushev~\cite{panyushev.2009} and later Propp and
Roby~\cite{propp.roby.2013}. It involves partitioning the underlying set into orbits 
under some group action, and proving that the
averaging property still holds on each orbit. Formally, let $S$ be a
set, $s$ a statistic on $S$, and $G$ a group acting on $S$. Then $s$ is
\defn{homomesic} with respect to the action of $G$ if the average of
$s$ on orbits is constant.

In our case, $G$ is the dihedral group generated by a ``bipartite'' version of
promotion, namely the odd and even operators $\tau_o$ and $\tau_e$.
\begin{theorem}
  \label{thm:braid_hooks_homomesy_even_odd}
  The number of braid hooks is homomesic with respect to the action of the
  group $\langle \tau_o,\tau_e\rangle$ on shifted SYT of staircase shape.
\end{theorem}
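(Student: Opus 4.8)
The plan is to refine the bijective proof of Theorem~\ref{thm:braid_hooks} to the level of individual $G$-orbits, where $G = \langle \tau_o, \tau_e \rangle$. Since that theorem already establishes a global average of one, it suffices to show that the average of the braid-hook statistic is \emph{constant} across orbits; the common value is then forced to be one. Writing $\mathcal{O}$ for a $G$-orbit of shifted staircase SYT, the target is the orbit-wise identity
\[
	\sum_{T \in \mathcal{O}} \#\{\text{braid hooks of } T\} = |\mathcal{O}|,
\]
which is the orbit-wise incarnation of homomesy with constant $1$.

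First I would make $\tau_o$ and $\tau_e$ explicit on the heap/shifted-tableau model of Section~\ref{section.heaps}, realizing each as a product of commuting (odd, resp.\ even) toggles, so that $\tau_o^2 = \tau_e^2 = 1$ and the rotation $\tau_e \tau_o$ plays the role of promotion. This exhibits every orbit as a dihedral orbit and reduces the count to understanding how braid hooks (Definition~\ref{definition.braid hook}) are created and destroyed as one applies the two generators. The key local step is to analyze, toggle by toggle, when the boundary configuration of three cells in shifted shape $(2,1)$ with consecutive labels $i-1,i,i+1$ appears or disappears, and to package this into a correspondence between the incidences $(T,h)$ --- with $T \in \mathcal{O}$ and $h$ a braid hook of $T$ --- and the tableaux of $\mathcal{O}$ itself.

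The main obstacle I anticipate, and the feature that makes the statement genuinely new, is the dihedral (nonabelian) structure. For the cyclic group $\langle \tau_e \tau_o \rangle$ one could hope to write the statistic as a constant plus a coboundary $h - h\circ(\tau_e\tau_o)$ and telescope around each rotation-cycle; but a dihedral orbit generally fuses two such cycles that the reflections interchange, so cyclic homomesy alone would not suffice --- indeed the point is precisely that the constant survives this fusion, which is why the homomesy is not induced by the abelian subgroup $\langle \tau_e \tau_o \rangle$. I would therefore need the reflections $\tau_o$ and $\tau_e$ to act compatibly on braid-hook incidences, for instance via an involution on incidences within $\mathcal{O}$ that pairs a hook of $T$ with a hook of $\tau_o T$, so that the accounting balances across the whole dihedral orbit rather than merely around a single rotation-cycle. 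Controlling the toggle fixed points at the boundary of the staircase, and the braid hooks that straddle the odd/even division, is where I expect the argument to demand the most care.
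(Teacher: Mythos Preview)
Your proposal identifies the right target---an orbit-preserving bijection between braid-hook incidences $(k,T)$ and tableaux within each $\langle\tau_o,\tau_e\rangle$-orbit---but it does not supply the bijection, and the heuristics you offer in its place do not lead there. Two concrete gaps:

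\textbf{The missing construction.} The paper's proof (in the reduced-word model) defines the map explicitly: if $k$ is the position of the braid in $\mathbf{w}$, set $\Phi(k,\mathbf{w}) := \mathbf{w}.\tau_{o(k-2)}\cdots\tau_{o(1)}$. This is manifestly orbit-preserving. Injectivity and surjectivity are then proved by a ``moving window'' argument: for a target word $\mathbf{w}$, form $\mathbf{w}^{(j)}=\mathbf{w}.\tau_{o(1)}\cdots\tau_{o(j)}$ and track the sign of $\mathbf{w}^{(i-2)}_{i+1}-\mathbf{w}^{(i-2)}_{i-1}$ as $i$ runs from $2$ to $N-1$. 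A local lemma (comparing adjacent windows after one application of $\tau_{o(i+1)}$, using only the quadratic and braid rules) shows this quantity can change sign at most once, and the boundary conditions $\lambda_1>\lambda_2$, $\lambda_\ell=1$ force it to change sign exactly once; the unique zero-crossing is the braid giving the preimage. None of this mechanism is present in your outline, and your alternative---an involution pairing a braid hook of $T$ with one of $\tau_o T$---does not by itself produce a bijection with $\mathcal{O}$; it would at best show the braid-hook count is $\tau_o$-invariant, which is neither what you need nor obviously true.

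\textbf{Two misidentifications.} First, $\tau_e\tau_o$ is \emph{not} promotion $\partial=\tau_1\tau_2\cdots\tau_{N-1}$; it is a gyration-type operator, and conflating the two will mislead any local analysis. Second, your telescoping/coboundary plan along $\langle\tau_e\tau_o\rangle$-cycles is a dead end: the paper notes (just after Theorem~\ref{theorem.homomesy words}) that the braid-hook statistic is \emph{not} homomesic for the cyclic subgroup already at $n=7$, so one cannot hope to prove the dihedral statement by first establishing cyclic homomesy and then checking compatibility with reflections.
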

This statement admits the same generalization, stated in Theorem~\ref{theorem.homomesy},
to right justified tableaux as in Theorem~\ref{thm:braid_moves_young_tableaux}.
Section~\ref{sec:even_odd_homomesy} provides a self-contained bijective proof
of Theorem~\ref{theorem.homomesy words} (which is a reformulation of 
Theorem~\ref{theorem.homomesy}) in the terms of reduced words. It is similar in spirit
to the proof of Theorem~\ref{thm:braid_moves_young_tableaux}  in that it uses certain toggle
operators which admit inverses when a braid is present.

It turns out that in general the number of braid hooks is \emph{not} homomesic with respect to 
the abelian subgroups of our dihedral group $\langle \tau_o, \tau_e \rangle$. Hence 
Theorem~\ref{thm:braid_hooks_homomesy_even_odd} provides an example 
of homomesy with respect to a nonabelian group that is not implied by a homomesy of an 
abelian subgroup (see~\cite[Section~2]{roby.2015} for a discussion about this).
In fact this is one of the very first examples of dihedral homomesy.
To the best of our knowledge, the only other known examples have appeared 
in~\cite{hohlweg.lortie.raymond.2010}, where it is proven that the barycenter of any associahedron
coincides with that of the permutahedron by using homomesy with respect to
dihedral subgroups, and in~\cite{pilaud.stump.2015}, where this statement is
generalized to Coxeter groups.

In Section~\ref{subsection.homomesy poset} we give a homomesy result for more general posets,
where the statistic is given by descents.

\begin{openproblem}
It would be interesting to extend the methods developed in this paper to the full set of reduced words
for $w_0$, that is, to study Reiner's original problem~\cite{reiner.2005} with these new techniques.
\end{openproblem}

\section*{Acknowledgements}

This project began in March 2015 at the workshop ``Dynamical algebraic combinatorics'' at the  American Institute 
of Mathematics (AIM).  We are indebted to Z.~Hamaker and V.~Reiner, who were part of our working group at AIM, for
many invaluable discussions and suggestions, and to H.~Thomas for his suggestions regarding
Section~\ref{subsection.homomesy poset}! We thank AIM for financial support and a stimulating environment for
collaboration, and the other organizers J.~Propp, T.~Roby, and J.~Striker for helping to organize the event.
The last author would like to thank G.~Panova for useful conversations. We also thank the anonymous referee
for helpful comments.

AS is partially supported by NSF grants OCI--1147247 and DMS--1500050,
and a short visit to Orsay was partially funded by DIGITEO/GT STIC
No 2015-XXD.

This research was driven by computer exploration using the open-source
mathematical software \texttt{Sage}~\cite{sage} and its algebraic
combinatorics features developed by the \texttt{Sage-Combinat}
community~\cite{Sage-Combinat}.

\section{Reduced words and Heaps}
\label{section.heaps}

In this section, we explain the bijection between reduced words in the commutation class of
$\mathbf{w_0} := (s_1 s_2 \cdots s_{n-1})(s_1 s_2 \cdots
s_{n-2})\cdots(s_1 s_2)(s_1)$ and shifted standard staircase tableaux.
This uses X.~Viennot's \defn{heap model}~\cite{viennot.1986} to construct a poset whose linear extensions are
in bijection with the reduced words in the commutation class. The linear extensions of the poset
can then be interpreted as tableaux.

To construct the poset for a reduced word $\mathbf{w} = s_{i_\ell} \cdots s_{i_1}$ of $w\in \mathfrak{S}_n$, associate 
a column to each simple transposition $s_i$ ($1\le i<n$) of $\mathfrak{S}_n$. We order the columns from left
to right with increasing $i$, so that the column for $s_i$ is adjacent to the columns of $s_{i-1}$ and $s_{i+1}$ 
(whenever they exist). Starting with the rightmost generator $s_{i_1}$ in $\mathbf{w}$ and moving left generator 
by generator in $\mathbf{w}$, successively drop a ``heap'' in column $i$ for each $s_i$ encountered. These heaps 
are wide enough such that two heaps in adjacent columns overlap. Note that a heap gets stuck above another heap
when the two heaps are in adjacent columns, which coincides with the case that the corresponding simple transpositions 
do not commute. The vertices of the poset $P_{\mathbf{w}}$ are precisely the heaps, and the covering relations are 
given by $v_2 \lessdot v_1$ if and only if $v_2$ is the lowest vertex above $v_1$ in a column adjacent to $v_1$.

\begin{example}
  Figure~\ref{figure.heaps} shows the construction of the poset
  $P_{\mathbf{w}}$ and its linear extension for three reduced words in
  $\Red(w_0)$ in $\mathfrak{S}_5$. The first word has no particular significance, the
  second word is $\mathbf{w_0}$, and the third one is in the commutation class
  $\Red(\mathbf{w_0})$.
\end{example}

\begin{figure}[t]
\newcommand{\fig}[1]{\includegraphics[height=3.5cm,trim=2cm 0cm 2cm 0cm,clip]{#1}}
\newcommand{\separator}{}
\fig{viennot-random-columns}\separator
\fig{viennot-random-boxes}\separator
\fig{viennot-random-falling}\separator
\fig{viennot-random-heap}\separator
\phantom{\fig{viennot-random-tableau}}

\fig{viennot-min-columns}\separator
\fig{viennot-min-boxes}\separator
\fig{viennot-min-falling}\separator
\fig{viennot-min-heap}\separator
\fig{viennot-min-tableau}

\fig{viennot-shifted-columns}\separator
\fig{viennot-shifted-boxes}\separator
\fig{viennot-shifted-falling}\separator
\fig{viennot-shifted-heap}\separator
\fig{viennot-shifted-tableau}

\caption{Incrementally building the heap posets (and shifted staircase
  tableaux when relevant) for the reduced words $s_3 s_2 s_3 s_1 s_4 s_3 s_2 s_3 s_1 s_4$, 
  $(s_1 s_2 s_3 s_4)(s_1 s_2 s_3)( s_1 s_2)(s_1)$, and
  $s_1 s_2 s_1 s_3 s_4 s_2 s_3 s_1 s_2 s_1$ of $w_0$ in $\mathfrak{S}_5$.
  \label{figure.heaps}}
\end{figure}

As suggested by the above example, any reduced word in the same
commutation class as $\mathbf{w}$ yields the same poset
$P_{\mathbf{w}}$. In fact, keeping track of the order in which each
heap (or vertex) is added gives a linear extension of this poset; it is not 
hard to see that the elements of $\Red(\mathbf{w})$ are in
bijection with such linear extensions.

Let $\mathbf{w}$ be any reduced word in the commutation class of
$\mathbf{w_0}$. The poset $P_{\mathbf{w}}$ has $\Delta_n:=(n,n-1,\ldots,1)$ elements on the
NE-diagonals. Rotating this poset (resp. linear extension of the poset)
counterclockwise by $45^o$ yields a \defn{shifted staircase partition}
(resp. \defn{standard shifted staircase tableau}). A shifted staircase tableau is characterized as increasing along rows from left to right and increasing along columns
from top to bottom. We denote the set of all standard shifted staircase tableaux
of shape $\Delta_n$ by $\ShSYT(\Delta_n)$.

From the bijection
\[
	\nu \colon \Red(\mathbf{w}_0) \to \ShSYT(\Delta_n)
\] 
we obtain the following result.
\begin{lemma}
\label{lemma.up down braid}
The only possible braid moves in elements of $\Red(\mathbf{w}_0)$ are those of the form $s_1 s_2 s_1$.
\end{lemma}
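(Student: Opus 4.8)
The plan is to work entirely inside the heap poset $P_{\mathbf{w_0}}$, whose linear extensions are the words of $\Red(\mathbf{w_0})$, and to classify which braid moves can be \emph{extracted}. A braid move available to the class is exactly an occurrence of $s_k s_{k+1} s_k$ or of $s_{k+1} s_k s_{k+1}$ as three consecutive letters in some word commutation-equivalent to $\mathbf{w_0}$, for some $1 \le k \le n-2$. I would use the standard extraction criterion: letters $a,b,c$ (in this order) can be made consecutive in a linear extension of a poset precisely when the closed interval $[a,c]$ equals $\{a,b,c\}$, for if a further element lay strictly between $a$ and $c$ it would be forced between them in every linear extension. The goal is then to show that the only realizable configuration has $a,c$ in column $1$ and $b$ in column $2$.

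First I would record the local structure of $P_{\mathbf{w_0}}$ on two adjacent columns. Write $(i,m)$ for the occurrence of $s_i$ in the $m$-th block $s_1 s_2 \cdots s_{n-m}$ of $\mathbf{w_0}$, where blocks are numbered $1,\dots,n-1$ from the left; then column $i$ consists of the $n-i$ elements $(i,1),\dots,(i,n-i)$. By the defining property of heaps, pieces in adjacent columns always overlap and are therefore comparable, as are pieces in a common column; hence the restriction of $P_{\mathbf{w_0}}$ to columns $k$ and $k+1$ is a total order, and reading off $\mathbf{w_0}$ shows it to be
\[
(k,1) < (k+1,1) < (k,2) < (k+1,2) < \cdots < (k,n-k-1) < (k+1,n-k-1) < (k,n-k).
\]
If $s_k s_{k+1} s_k$ is extractable, its three letters are consecutive in this total order, which forces $a=(k,j)$, $b=(k+1,j)$, $c=(k,j+1)$ for some $j$; likewise, extractability of $s_{k+1} s_k s_{k+1}$ forces $a=(k+1,j)$, $b=(k,j+1)$, $c=(k+1,j+1)$.

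The heart of the argument is then to exhibit, in each unwanted case, a fourth element strictly inside $[a,c]$. For $s_k s_{k+1} s_k$ with $k \ge 2$, the element $(k-1,j+1)$ serves: it exists because $c=(k,j+1)$ does (so $j+1 \le n-k \le n-(k-1)$) and $k-1 \ge 1$, and comparing blocks and positions in the adjacent columns $k-1,k$ gives $(k,j) < (k-1,j+1) < (k,j+1)$. For $s_{k+1} s_k s_{k+1}$ and any $k$, the existence of $c=(k+1,j+1)$ forces $j \le n-k-2$; since $j \ge 1$ this gives $k \le n-3$, so column $k+2$ is nonempty and $(k+2,j)$ satisfies $(k+1,j) < (k+2,j) < (k+1,j+1)$ (the degenerate case $k=n-2$ cannot even arise, as column $k+1=n-1$ is then a singleton and cannot hold the two required letters). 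In either case $[a,c] \ne \{a,b,c\}$, so the move is not realizable. The sole surviving pattern is $s_1 s_2 s_1$, for which column $0$ does not exist and no such obstruction is available; this is the claim.

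I expect the main obstacle to be the careful handling of the extraction criterion rather than any deep difficulty. The crucial subtlety is that the obstructing element must lie in the \emph{order interval} $[a,c]$ and not merely be incomparable to $a,b,c$ (an incomparable element could be relocated within a linear extension and would not block the move); this is exactly why I choose each obstruction in a column adjacent to the two extreme letters, making comparability automatic. One should also verify once and for all that two adjacent columns induce a total order — which rests on the overlap property of heaps in adjacent columns, equivalently on the non-commutation of $s_k$ and $s_{k+1}$.
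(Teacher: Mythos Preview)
Your argument is correct. The extraction criterion you invoke is the right tool, your description of the total order on two adjacent columns of the heap for $\mathbf{w_0}$ is accurate, and the obstructing elements $(k-1,j+1)$ and $(k+2,j)$ do lie strictly in the interval $[a,c]$ in each case, with the existence checks handled properly.

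The paper's proof is the same combinatorial observation in a different dress: it passes through the bijection $\nu$ to shifted staircase tableaux and notes that a braid $s_i s_{i+1} s_i$ or $s_{i+1} s_i s_{i+1}$ becomes a hook of shape $(2,1)$ filled with consecutive entries $k-1,k,k+1$; unless this hook sits on the diagonal, the fourth cell completing a $2\times 2$ square is present and its entry $a$ is squeezed by the row/column conditions to satisfy $k-1<a<k+1$, forcing $a=k$ and contradicting standardness. That ``fourth cell'' is precisely your obstructing poset element, and the tableau inequalities are exactly your interval membership. What you gain by staying in the heap is that you never invoke $\nu$ and you make the role of the interval criterion explicit; what the paper gains is brevity once $\nu$ is in hand, since the $2\times 2$-square argument is a one-liner in tableau language.
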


\begin{proof}
Observe that under the bijection $\nu$, braid moves $s_i s_{i+1} s_i$ and $s_{i+1} s_i s_{i+1}$
in a reduced word would result in hooks in the corresponding tableau of the form
\begin{equation}
\label{equation.hooks}
	\scalebox{.7}{\tableau[mbY]{k-1 &k\\ \bl &k+1}}
	\qquad \text{and} \qquad 
	\scalebox{.7}{\tableau[mbY]{k-1& \bl\\ k&k+1}}
\end{equation}
respectively. Note that the first hook can sit on the diagonal, whereas the second hook
has to appear inside the tableau. If the hook appears inside the tableau, there is a letter $a$ in
\[
	\scalebox{.7}{\tableau[mbY]{k-1 &k\\ a &k+1}}
	\qquad \text{or} \qquad 
	\scalebox{.7}{\tableau[mbY]{k-1& a\\ k&k+1}}
\]
such that $k-1<a<k+1$ by the tableau conditions. This implies that $a=k$, which contradicts the
fact that the tableau is standard and $k$ already appears. Hence the only possibility is for the first hook
in~\eqref{equation.hooks} to appear on the diagonal. Under the bijection $\nu$ this corresponds precisely to
a braid move $s_1 s_2 s_1$.
\end{proof}

\begin{definition}
\label{definition.braid hook}
Let $t \in \ShSYT(\Delta_n)$. Then we say that $k$ is a \defn{braid hook} of $t$ if there is a sequence of 
consecutive letters $k-1,k,k+1$ in $t$ with no box below the box containing 
$k-1$, as in the first picture in~\eqref{equation.hooks}.
\end{definition}

\begin{example}
The following tableau in $\ShSYT(\Delta_6)$
\[
	\tableau[sY]{1&2&3&7&9&14\\ \bl&\darkblue{4}&\darkblue{5}&8&12&15\\ \bl&\bl&\darkblue{6}&10&13&17\\
	\bl&\bl&\bl&11&16&18\\ \bl&\bl&\bl&\bl&\darkred{19}&\darkred{20}\\ \bl&\bl&\bl&\bl&\bl&\darkred{21}}
\]
has braid hooks $5$ (involving the letters $4,5,6$ on the second position of the diagonal) and
$20$ (involving $19,20,21$ on the fifth position of the diagonal).
\end{example}

\noindent
By the results of this section, and using the bijection $\nu$, we can reformulate
Theorem~\ref{thm:braid_moves_commutation_class} entirely in terms of tableaux.

{
\renewcommand{\thetheorem}{\ref{thm:braid_hooks}}
\begin{theorem}
	 The expected number of braid hooks of elements in  $\ShSYT(\Delta_n)$ is one.
\end{theorem}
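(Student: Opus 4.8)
The plan is to recast the statement as the existence of a bijection. By the reformulation of this section, the expected number of braid hooks equals one precisely when the total number of braid hooks, summed over all $t \in \ShSYT(\Delta_n)$, equals $|\ShSYT(\Delta_n)|$. So I would aim to construct an explicit bijection $\Phi$ between $\ShSYT(\Delta_n)$ and the set $\mathcal{B}$ of pairs $(t,k)$ in which $k$ is a braid hook of $t$. It helps to first unwind Definition~\ref{definition.braid hook} into coordinates: writing $d_1 < d_2 < \cdots < d_n$ for the entries on the diagonal cells $(1,1),\ldots,(n,n)$, a braid hook at diagonal position $i$ is exactly the condition that $d_{i+1} = d_i + 2$ while the super-diagonal cell $(i,i+1)$ carries the value $d_i+1$; equivalently, under $\nu$, an occurrence of the pattern $s_1 s_2 s_1$ in the reduced word. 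Thus an element of $\mathcal{B}$ is a tableau together with a choice of one of its braid hooks.

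For the bijection itself I would use the promotion operator $\partial$ on $\ShSYT(\Delta_n)$, realized as Sch\"utzenberger promotion on linear extensions of the heap poset $P_{\mathbf{w_0}}$ and corresponding to a promotion-type rotation of the reduced words in $\Red(\mathbf{w_0})$. The idea is to let $\partial$ transport a chosen braid hook toward the top corner: I would first show that any braid hook can be carried, after a determined number of promotions, to the corner configuration with labels $1,2,3$ on the cells $(1,1),(1,2),(2,2)$, and that one further promotion then \emph{consumes} it, since the deletion of $1$ removes the hook. Setting $\Phi(t,k)$ to be the tableau obtained at the moment of consumption, the inverse map demotes and watches for the corner hook that demotion re-creates, reading off both the tableau and the number of steps, and hence recovering $k$. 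The number of promotions needed is dictated by $k$, so distinct hooks of the same tableau are sent to distinct images.

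The main obstacle is controlling the action of $\partial$ on a braid hook in transit. Promotion is a jeu de taquin slide, and the sliding path can run straight through the three hook cells and scramble the consecutive triple: a short computation on the displayed tableau of $\ShSYT(\Delta_6)$ already shows the triple $4,5,6$ broken apart after a single promotion. Consequently braid hooks are not merely shifted but are genuinely created and destroyed along the way, and the heart of the argument is a local lemma tracking the three labels $k-1,k,k+1$ through one promotion according to where the slide path enters and leaves the hook. I expect to organize this by cases on the position of the path relative to the hook, with the boundary situations---the hook sitting at the top corner, so its minimum is $1$, or abutting the maximal entry $N$ at $(n,n)$---needing separate and careful treatment, since these are exactly the places where hooks appear and disappear. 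Proving that creation and destruction are inverse to one another, and that no two pairs $(t,k)$ collide, is the crux.

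An alternative route, which also foreshadows the homomesy refinement of Section~\ref{sec:even_odd_homomesy}, is to prove the \emph{global} statement that the number of braid hooks summed over each $\partial$-orbit equals the size of that orbit; summing over orbits then yields the count directly, and the two braid hooks of the sample tableau confirm that $\partial$ must both create and destroy hooks within an orbit rather than preserve their number. As an independent consistency check one can run the Reiner-style excision, deleting the three hook cells and comparing quotients of shifted hook-length formulas, although this produces a summation identity rather than a bijection.
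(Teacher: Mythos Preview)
Your plan has the right target (a bijection between $\ShSYT(\Delta_n)$ and the set of pairs $(t,k)$ with $k$ a braid hook of $t$) and the right toolkit (promotion-type operators), but the specific implementation has a genuine gap that you yourself flag without resolving. You propose to carry a braid hook to the corner by iterating full promotion $\partial$, with the number of iterations ``dictated by $k$''. But as your own computation on the $\Delta_6$ example shows, a single application of $\partial$ can destroy the consecutive triple, so there is no determined number of promotions after which the hook sits at the corner; the hook may simply vanish en route. Your fallback is a local lemma tracking $k-1,k,k+1$ through each slide, but you do not state such a lemma, and in fact no clean statement of this kind is available for full promotion on the shifted staircase: the slide path can enter and leave the hook in several ways, and the case analysis does not close up into an invertible rule. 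Your second alternative, averaging over $\partial$-orbits, is also not established; the homomesy proved later in the paper is for the \emph{dihedral} group $\langle\tau_o,\tau_e\rangle$, not for the cyclic group generated by $\partial$, and the paper explicitly warns that passing to abelian subgroups can break the homomesy.

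The paper's proof avoids exactly this difficulty by replacing full promotion with the \emph{partial} operators $\partial_k=\tau_k\tau_{k+1}\cdots\tau_{N-1}$ and $\partial^*_k=\tau_{k-1}\tau_{k-2}\cdots\tau_1$, setting $\varphi(k,t)=t.\partial^*_k\partial_k$. The point is that these two pieces act on disjoint label intervals, so one never has to track the hook through a long sequence of slides; instead, the inverse is read off from a single picture. Concretely, for $t\in\rSYT(\lambda)$ one draws the promotion path $\mathcal{L}$ and the inverse promotion path $\mathcal{R}$ simultaneously and shows, via two forbidden $2\times2$ configurations, that they cross exactly once, necessarily at a left inner corner; the value $k$ at that corner and the tableau $t'=t.(\partial^*_k\partial_k)^{-1}$ give the unique preimage, with $k$ a braid hook of $t'$. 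This crossing argument is the missing idea in your sketch: it replaces your unproven ``local tracking lemma'' with a global topological statement about two lattice paths that share endpoints, and it is what makes injectivity and surjectivity fall out in one stroke. The argument moreover works uniformly for all right-justified shapes $\lambda$ with $\lambda_1>\lambda_2$ and $\lambda_\ell=1$, of which $\Delta_n$ is the special case.
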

\addtocounter{theorem}{-1}
}
Theorem~\ref{thm:braid_hooks} (and, as a corollary,
Theorem~\ref{thm:braid_moves_commutation_class}) will be proved in the
next section in a more general setting.

\section{Right-justified tableaux}
\label{section.right-justified}

The statement of Theorem~\ref{thm:braid_hooks} regarding the expected number of braid hooks in standard 
shifted tableaux of staircase shape can be generalized to more general shapes. Let 
$\lambda = (\lambda_1,\lambda_2,\ldots,\lambda_\ell)$ be a partition, which means that $\lambda_1,\ldots,\lambda_\ell$
are integers satisfying $\lambda_1\ge \lambda_2 \ge \cdots \ge \lambda_\ell \ge 0$. We define $\rSYT(\lambda)$ to be the 
set of standard tableaux of the diagram given by $\lambda$, where we right-justify all rows. This definition requires 
as usual that all rows and columns are strictly increasing from left to right and top to bottom.
Note that
\begin{equation}
\label{equation.rSYT=shSYT}
	\rSYT(\Delta_n)=\ShSYT(\Delta_n)\;.
\end{equation}
A braid hook for $t\in \rSYT(\lambda)$ is defined in the same way as in Definition~\ref{definition.braid hook}.

\begin{example}
Let $\lambda=(5,2,1)$. Then
\[
	\rSYT(\lambda) = \left\{ \;
	\tableau[sY]{1&2&3&4&5\\ \bl&\bl&\bl&\darkblue{6}&\darkblue{7}\\ \bl&\bl&\bl&\bl&\darkblue{8}} \;,\;
	\tableau[sY]{1&2&\darkblue{3}&\darkblue{4}&6\\ \bl&\bl&\bl&\darkblue{5}&7\\ \bl&\bl&\bl&\bl&8}\;
	\right\},
\]
where the braid hooks are indicated in blue. Note that the expected number of braid hooks is one in this case.
\end{example}

\begin{theorem}
	Let $\lambda = (\lambda_1,\lambda_2,\ldots,\lambda_\ell)$ be a partition such that 
	$\lambda_1 > \lambda_2$ and $\lambda_\ell = 1$.  
	Then the expected number of braid hooks in $\rSYT(\lambda)$ is one.
\label{thm:braid_moves_young_tableaux}
\end{theorem}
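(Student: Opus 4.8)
The plan is to prove the equivalent counting identity
\[
\sum_{t \in \rSYT(\lambda)} \#\{k : k \text{ is a braid hook of } t\} \;=\; |\rSYT(\lambda)|,
\]
by constructing a bijection $\Phi$ between $\rSYT(\lambda)$ and the set of \emph{marked} tableaux $\mathcal{M}(\lambda) := \{(t,k) : t \in \rSYT(\lambda),\ k \text{ a braid hook of } t\}$. This mirrors V.~Reiner's strategy for Theorem~\ref{thm:braid_moves_reiner}, where a braid move is rotated to a canonical position, except that I would run promotion \emph{internally} on the right-justified tableaux rather than passing through Edelman--Greene insertion; by \eqref{equation.rSYT=shSYT} this recovers Theorem~\ref{thm:braid_hooks} (and hence Theorem~\ref{thm:braid_moves_commutation_class} via $\nu$) as the staircase special case.

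The first step is to pin down where a braid hook can sit. If consecutive entries $k-1,k,k+1$ occupy cells $(i,j),(i,j+1),(i+1,j+1)$ with no cell below $k-1$, as in the first diagram of \eqref{equation.hooks} and Definition~\ref{definition.braid hook}, then right-justification forces $(i,j+1)$ to lie directly above the leftmost cell of row $i+1$, so $j=\lambda_1-\lambda_{i+1}$, and the presence of $(i,j)$ forces $\lambda_i>\lambda_{i+1}$. Thus the admissible positions are indexed by the strict descents $1\le i\le \ell-1$ of $\lambda$, and the identity to be proved becomes
\[
\sum_{\substack{1 \le i \le \ell-1 \\ \lambda_i > \lambda_{i+1}}} \#\{t \in \rSYT(\lambda) : t \text{ has a braid hook at junction } i\} \;=\; |\rSYT(\lambda)|.
\]
Here the hypothesis $\lambda_1>\lambda_2$ guarantees that the minimal entry $1$ sits alone at $(1,1)$ with genuine overhang, while $\lambda_\ell=1$ guarantees that the maximal entry $N:=|\lambda|$ occupies the \emph{unique} outer corner $(\ell,\lambda_1)$.

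Next I would introduce the promotion operator $\partial$ on $\rSYT(\lambda)$: delete the entry $1$ at $(1,1)$, perform the jeu-de-taquin slide, which by uniqueness of the outer corner always terminates at $(\ell,\lambda_1)$, and then decrement. The two boundary conditions are exactly what make both the deletion site and the insertion site unique, so $\partial$ is a well-defined shape-preserving bijection. The crux is a \emph{tracking lemma}: whenever the slide path avoids the three cells of a braid hook, that hook at value $k$ is carried to a braid hook at value $k-1$ in $\partial(t)$; the only values $k$ whose hooks are destroyed are those the slide crosses, and each such destruction is compensated by a hook freshly created near the inserted maximal entry. Summing this conservation law around each $\partial$-orbit $O$ gives a one-to-one matching between the marked tableaux supported on $O$ and the tableaux of $O$ itself, so that $\#(\mathcal{M}(\lambda)\cap(O\times\mathbb{Z}))=|O|$; this is precisely the orbit-level statement later refined into the homomesy of Theorem~\ref{thm:braid_hooks_homomesy_even_odd}. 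Assembling over all orbits yields $\Phi$.

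The hard part will be the tracking lemma, i.e.\ controlling the interaction between the globally determined jeu-de-taquin path and an existing braid hook. I must show that, around one full promotion cycle, the slide sweeps through each braid-hook triple \emph{exactly once}, so that creations and destructions of hooks are in exact one-to-one correspondence and the orbit count is balanced rather than merely balanced on average; because the slide path depends on the entire tableau, this demands a careful case analysis at the strict-descent junctions, and it is here that $\lambda_1>\lambda_2$ and $\lambda_\ell=1$ must be used essentially, since without them the deletion or insertion corner ceases to be unique and the pairing breaks. A secondary nuisance is that the most tempting alternative, induction by removing the corner cell $(\ell,\lambda_1)$, leaves a shape $(\lambda_1,\dots,\lambda_{\ell-1})$ that need not satisfy the hypotheses of Theorem~\ref{thm:braid_moves_young_tableaux}; I therefore expect the argument to be cleanest phrased purely in terms of $\partial$-orbits, with the explicit telescoping evaluation of the displayed descent sum kept as a cross-check in small cases.
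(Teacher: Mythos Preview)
Your overall strategy of exhibiting a bijection between $\rSYT(\lambda)$ and the set of marked tableaux is exactly what the paper does, but the mechanism you propose is different from the paper's and, as written, has a real gap.

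The paper does \emph{not} use full promotion $\partial$ and orbit tracking. Instead, given a braid hook $k$ in $t'$, it applies \emph{partial} promotion operators anchored at $k$: it sets $\varphi(k,t')=t'.\partial_k^*\partial_k$ with $\partial_k=\tau_k\tau_{k+1}\cdots\tau_{N-1}$ and $\partial_k^*=\tau_{k-1}\cdots\tau_1$. The inverse is then read off from a single tableau $t$ by superimposing the promotion path $\mathcal{L}$ and the inverse promotion path $\mathcal{R}$ of $t$. A short local analysis of forbidden $2\times 2$ configurations shows that these two paths cross exactly once, necessarily at an inner corner on the left boundary, and the value $k$ sitting at that corner is the unique preimage. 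The hypotheses $\lambda_1>\lambda_2$ and $\lambda_\ell=1$ enter only to pin down that $\mathcal{R}$ starts above $\mathcal{L}$ at the top-left and that $\mathcal{L}$ ends above $\mathcal{R}$ at the bottom-right, forcing a crossing. No orbit bookkeeping is required.

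Your proposal hinges entirely on the ``tracking lemma,'' and this is where the gap lies. First, the compensation claim (``each destruction is compensated by a hook freshly created near the inserted maximal entry'') is not a precise statement: as written it would imply that the number of braid hooks is \emph{constant} along every $\partial$-orbit, which is strictly stronger than homomesy and which you have not justified. Second, even if you weaken this to the orbit-sum identity $\sum_{t\in O}\#\{\text{braid hooks in }t\}=|O|$, that is a homomesy for the \emph{cyclic} group $\langle\partial\rangle$, and you give no argument for it; you also misidentify it with Theorem~\ref{thm:braid_hooks_homomesy_even_odd}, which concerns the \emph{dihedral} group $\langle\tau_o,\tau_e\rangle$---a genuinely different orbit decomposition (the paper emphasizes that homomesy already fails for the cyclic subgroup generated by gyration $\tau_o\tau_e$, so one cannot freely swap group actions). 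Third, the phrase ``around one full promotion cycle the slide sweeps through each braid-hook triple exactly once'' has no clear meaning, since braid hooks are not fixed subsets of cells but depend on the tableau.

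In short: your localisation of braid hooks to strict descents of $\lambda$ and your observation that $\partial$ is a well-defined shape-preserving bijection are correct, but the heart of the argument---the tracking/compensation lemma---is neither formulated precisely nor proved, and your appeal to the later homomesy theorem is to the wrong group. If you want a workable route, replace full promotion by the partial operators $\partial_k$ and $\partial_k^*$ and analyse the single crossing of the two sliding paths; this turns the global orbit argument you were dreading into a short local computation.
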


Note that by~\eqref{equation.rSYT=shSYT}, Theorem~\ref{thm:braid_hooks} is the special case of
Theorem~\ref{thm:braid_moves_young_tableaux} for $\lambda=\Delta_n$.
In this section we prove the latter by constructing a bijection
\begin{equation}
\label{equation.varphi}
	\varphi \colon \{(k,t) \mid \text{$k$ a braid hook in $t\in \rSYT(\lambda)$}\} 
	\to \rSYT(\lambda)\;.
\end{equation}
The map $\varphi$ is defined using certain operators akin to the
promotion operator on tableaux. For $1\le i< |\lambda|$, let
\begin{equation}
\label{equation.tau}
\begin{split}
	\tau_i \colon \rSYT(\lambda) &\to \rSYT(\lambda)\\
	t & \mapsto t. \tau_i
\end{split}
\end{equation}	
be the map that interchanges $i$ and $i+1$ in $t$ if the result is again in $\rSYT(\lambda)$ 
and otherwise leaves $t$ fixed. Define
\[
	\varphi(k,t) := t.\partial^*_k \partial_k\;,
\]
where $\partial_k := \tau_k \tau_{k+1} \cdots \tau_{|\lambda|-1}$ and
$\partial^*_k := \tau_{k-1} \tau_{k-2} \cdots \tau_1$.  Note that the
operators $\partial_k$ and $\partial^*_k$ are partial \defn{promotion} and 
\defn{inverse promotion} operators, respectively. For example, as
explained in~\cite{stanley.2009}, the operator
\[
	\partial = \partial_1 = \tau_1 \tau_2 \cdots \tau_{|\lambda|-1}
\]
coincides with M.P.~Sch\"utzenberger's \defn{promotion} on tableaux.  This promotion operator is more commonly 
defined using \defn{jeu-de-taquin} as follows:
given a tableau, remove the letter 1 and successively slide the smaller of the right and lower neighbor cells
(if they exist) into the empty slot, until the empty slot occupies a cell with no nonempty right or lower neighbor cells.   Now enter $|\lambda|+1$ into the empty cell and subtract one from each entry.
Similarly, the inverse promotion operator $\partial^*=\partial^{-1}$ can be defined using a sliding algorithm
starting from the largest letter in the tableau. The inverse promotion operator may be expressed as 
\[
	\partial^* = \partial^*_{|\lambda|} = \tau_{|\lambda|-1}\tau_{|\lambda|-2}\cdots\tau_1\;.
\]

The sequence of empty slots in the jeu-de-taquin formulation of the promotion operator define the 
\defn{promotion sliding path}, denoted $\mathcal{L}$. 
The \defn{inverse promotion sliding path} is denoted by $\mathcal{R}$. 
Their description might give the impression that $\mathcal{L}$ and $\mathcal{R}$ are oppositely 
directed (since $\mathcal{L}$ is defined by removing the letter 1 and then sliding into the empty slot, whereas
for $\mathcal{R}$ one removes $|\lambda|$). However, we define them only as undirected paths. 
Later in this section we will treat them both as paths  directed from the top left to bottom right.

\begin{example}
\label{example.sliding paths}
We illustrate the promotion sliding path $\mathcal{L}$ by bold cells and the inverse promotion path
$\mathcal{R}$ by shaded blue cells:
\[
	\mathcal{L}:
	\tableau[sY]{\tf 1&\tf 2&4&6&10&12\\
	\bl& \tf 3&\tf 5&\tf 7&11&13\\
	\bl&\bl&8&\tf 9& \tf 14& 17\\
	\bl&\bl&\bl&15&\tf 16& \tf 18\\
	\bl&\bl&\bl&\bl&19&\tf 20\\
	\bl&\bl&\bl&\bl&\bl&\tf 21}
	\qquad \qquad \qquad 
	\mathcal{R}:
	\tableau[sY]{\lb \overlay 1&\lb \overlay 2&\lb \overlay 4&6&10&12\\
	\bl& 3&\lb \overlay5&7&11&13\\
	\bl&\bl&\lb \overlay 8&\lb \overlay 9& 14& 17\\
	\bl&\bl&\bl&\lb \overlay 15&\lb \overlay 16&18\\
	\bl&\bl&\bl&\bl&\lb \overlay 19&\lb \overlay 20\\
	\bl&\bl&\bl&\bl&\bl&\lb \overlay 21}\;.
\]
\end{example}
Throughout this section, we will continue to illustrate the promotion path $\mathcal{L}$ and inverse promotion path $\mathcal{R}$ with bold and shaded cells, respectively.

\begin{lemma}
\label{lemma.phi bijection}	
	Let $\lambda = (\lambda_1,\lambda_2,\ldots,\lambda_\ell)$ be a partition such that 
	$\lambda_1 > \lambda_2$ and $\lambda_\ell = 1$.  
	Then $\varphi$ is a bijection.
\end{lemma}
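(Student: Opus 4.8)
The plan is to exhibit a two-sided inverse for $\varphi$. The starting observation is that each $\tau_i$ is an involution on $\rSYT(\lambda)$: if the swap of $i$ and $i+1$ is legal, then undoing it is legal, and otherwise $\tau_i$ fixes the tableau; either way $\tau_i^2 = \mathrm{id}$. Consequently $\partial_k$, $\partial^*_k$, and hence $\pi_k := \partial^*_k\partial_k$ are bijections of $\rSYT(\lambda)$, with $\pi_k^{-1} = \tau_{|\lambda|-1}\cdots\tau_k\,\tau_1\cdots\tau_{k-1}$. Thus for each fixed $k$ the partial map $t\mapsto t.\pi_k$ is injective, and inverting $\varphi$ reduces to a single task: reading the value $k$ off the image $u = \varphi(k,t)$, after which $t = u.\pi_k^{-1}$ is forced. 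I would therefore define $\psi(u) := (k(u),\, u.\pi_{k(u)}^{-1})$ for an explicit rule $k(\cdot)$ and verify $\psi\circ\varphi=\mathrm{id}$ and $\varphi\circ\psi=\mathrm{id}$.

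The core of the argument is a sliding-path analysis of $\pi_k$ when $k$ is a braid hook of $t$. Writing the braid hook as $k-1$ at a cell $P$, with $k$ immediately to its right and $k+1$ immediately below $k$ (and no cell below $P$), the first letter of each factor acts trivially: $\tau_{k-1}$ would swap the horizontally adjacent $k-1,k$ and $\tau_k$ would swap the vertically adjacent $k,k+1$, both illegal. Hence $t.\partial^*_k$ is inverse promotion restricted to the entries $\le k$ and the subsequent $\partial_k$ is promotion restricted to the entries $\ge k$, both anchored at the braid-hook cell. I would show that $\partial^*_k$ carves out an inverse-promotion path $\R$ running from the NW corner of $\lambda$ into the cell of $k$, and that $\partial_k$ carves out a promotion path $\L$ running from that cell to the SE corner, so that $\varphi(k,t)$ is obtained by transporting entries along the concatenated path $\R\cup\L$ through the braid hook. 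The hypotheses enter precisely here: $\lambda_1>\lambda_2$ guarantees that $(1,1)$ has no cell beneath it, so that $\R$ terminates at a genuine corner, while $\lambda_\ell=1$ guarantees the bottom cell is a corner, so the braid hook sits in the only admissible way (as forced by Lemma~\ref{lemma.up down braid}). The upshot should be that $u=\varphi(k,t)$ carries a distinguished signature along $\R\cup\L$ from which $k(u)$ is read unambiguously.

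With the rule $k(\cdot)$ in hand, the identity $\psi\circ\varphi=\mathrm{id}$ is the statement that the signature of $\varphi(k,t)$ recovers $k$, after which $u.\pi_k^{-1}=t$ by the involution bookkeeping of the first paragraph. The reverse identity $\varphi\circ\psi=\mathrm{id}$ is the delicate direction: one must show that for every $u\in\rSYT(\lambda)$ the tableau $u.\pi_{k(u)}^{-1}$ genuinely has $k(u)$ as a braid hook, that is, that the reconstruction lands in the domain of $\varphi$. I expect this to be the main obstacle, since it requires proving that the sliding-path signature defining $k(u)$ is always present and, after applying $\pi_{k(u)}^{-1}$, always produces the L-shaped pattern of Definition~\ref{definition.braid hook}; this is where the path analysis must be run in reverse and where the boundary hypotheses on $\lambda$ are used a second time. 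As a calibration I would first check the small case $\lambda=(5,2,1)$, where $\rSYT(\lambda)$ has two elements and $\varphi$ simply interchanges them, to fix the precise form of the signature and of the rule $k(\cdot)$ before attempting the general sliding-path bookkeeping.
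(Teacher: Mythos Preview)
Your framework is the same as the paper's: each $\tau_i$ is an involution, so $\pi_k:=\partial^*_k\partial_k$ is a bijection, and inverting $\varphi$ reduces to extracting $k$ from $u=\varphi(k,t)$. The gap is in how you propose to perform that extraction. Your ``distinguished signature'' lives on the concatenated path $\R\cup\L$, where $\R$ and $\L$ are the partial sliding paths carved out by $\partial^*_k$ and $\partial_k$; but those partial paths are anchored at the cell holding $k$, so computing them already presupposes knowing $k$. That circularity is exactly what has to be broken, and your proposal does not say how.

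The paper's device is to look not at partial paths but at the \emph{full} promotion path $\mathcal{L}$ and the \emph{full} inverse promotion path $\mathcal{R}$ of the image tableau $u$, both of which run from the NW cell to the SE cell and depend only on $u$, not on $k$. The remaining work is then purely local: one checks that $k$ is a braid hook of $u.\pi_k^{-1}$ if and only if $\mathcal{L}$ and $\mathcal{R}$ cross at a left inner corner of value $k$, and that two $2\times 2$ configurations in which the paths would cross elsewhere are forbidden by the simultaneous inequalities defining $\mathcal{L}$ and $\mathcal{R}$. The hypotheses enter not as termination-at-corners (as you suggest) but as boundary conditions on the relative position of the two full paths: $\lambda_1>\lambda_2$ forces $\mathcal{L}$ and $\mathcal{R}$ to share the first horizontal step and $\lambda_\ell=1$ forces them to share the last vertical step, so under a convention for which path is ``above'' on shared steps, $\mathcal{R}$ begins above $\mathcal{L}$ and ends below. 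Combined with the forbidden configurations, this yields exactly one crossing, and its position is the rule $k(u)$. The verification you flagged as the delicate direction---that $u.\pi_{k(u)}^{-1}$ really has $k(u)$ as a braid hook---is the same local computation read backwards, so it comes for free once the crossing characterization is in hand.
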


\begin{proof}
  To show that $\varphi$ is a bijection, we explicitly construct its
  inverse. To this end, let $t\in \rSYT(\lambda)$. We want to
  associate to $t$ a pair $(k,t')$, where $k$ is a braid hook in
  $t' \in \rSYT(\lambda)$ and $t=\varphi(k,t')$. Given that each
  $\tau_i$ is a bijection, so is $\partial^*_k\partial_k$, and
  $t'=t.(\partial^*_k\partial_k)^{-1}$ is completely determined by
  $k$. Hence, all that is needed is to prove is that there exists a
  unique $k$ such that $k$ is a braid hook of
  $t.(\partial^*_k\partial_k)^{-1}$.

To achieve this, we use that $\partial_k$ and $\partial^*_k$ are
the partial promotion and inverse promotion operator, respectively, and
study the crossings of the promotion path $\mathcal{L}$ and the
inverse promotion path $\mathcal{R}$ in $t$.
Namely, note that $k$ is a braid hook of $t'=t.(\partial^*_k\partial_k)^{-1}$
if and only if the promotion path $\mathcal{L}$ and inverse promotion path $\mathcal{R}$
of $t$ cross in the left inner corner specified by $k$ according to the following
configuration
\begin{equation}
\label{equation.allowed}
   \tableau[sY]{\tf $x$ & \tf \lb \overlay $k$\\ \bl & \lb \overlay $y$}\;,
\end{equation}
where $x$ and $y$ are any allowed values.
This can be seen as follows: the action of  
$\partial_k^{-1}=\tau_{|\lambda|-1}\cdots\tau_k$ on $t'$ performs
jeu-de-taquin along the suffix of the inverse promotion path $\mathcal{R}$,
down to value $k$. At the end, $y$ is replaced by $k+1$ if and only
if $k$ moves into the cell of $y$ under jeu-de-taquin, that is, if the inverse promotion path 
$\mathcal{R}$ of $t$ is as in~\eqref{equation.allowed}. The same
reasoning relates the replacement of $x$ by $k-1$ in $t'$ with the
position of the promotion path $\mathcal{L}$ of $t$ as in~\eqref{equation.allowed}.

It remains to prove that the paths $\mathcal{L}$ and $\mathcal{R}$ of $t$ admit exactly one such crossing.
First notice that the $2\times 2$ configuration
\begin{equation}
\label{equation.forbidden1}
	\tableau[sY]{\tf $x$ & b \\ \tf \lb \overlay $a$& \lb \overlay $y$}
\end{equation}
is forbidden in $t$. Namely, the conditions for $\mathcal{L}$ impose that
$a<b$ whereas the conditions for $\mathcal{R}$ require that $a>b$, a contradiction.
Symmetrically, the following $2\times 2$ configuration is forbidden:
\begin{equation}
\label{equation.forbidden2}
	\tableau[sY]{\tf $x$ & \tf \lb \overlay $b$\\ $a$ & \lb \overlay $y$}\;.
\end{equation}
If the letter $a$ below $x$ is missing, however, then~\eqref{equation.forbidden2} \emph{is} allowed, and this recovers configuration~\eqref{equation.allowed} with $b=k$.

By the conditions on right-justified tableaux, the letter 1 is in the top leftmost cell of $t$ and the largest letter 
$|\lambda|$ is in the bottom rightmost cell of $t$. Hence, both sliding paths $\mathcal{L}$ and $\mathcal{R}$ reach 
from the top leftmost cell to the bottom rightmost cell of $t$. Whenever the two paths overlap on a horizontal step
$\tableau[sY]{\tf \lb \overlay $a$ & \tf \lb \overlay $b$}$, let us consider $\mathcal{R}$ to be (locally) above 
$\mathcal{L}$. If, on the other hand, they overlap on a vertical step 
$\tableau[sY]{\tf \lb \overlay $a$\\ \tf \lb \overlay $b$}$, then we consider $\mathcal{L}$ to be (locally) above $\mathcal{R}$.
If the two paths do not overlap, the northeastern path is considered to be (locally) above the other. 

Notice that the two paths $\mathcal{L}$ and $\mathcal{R}$ overlap in the two top leftmost horizontal cells
since $\lambda_1>\lambda_2$. Likewise for the last two vertical steps in the bottom right corner the paths overlap since 
$\lambda_\ell = 1$. Hence (according to our conventions) the paths start out with $\mathcal{R}$ 
above $\mathcal{L}$, and finish with $\mathcal{L}$ above $\mathcal{R}$. The forbidden 
configurations~\eqref{equation.forbidden1} and~\eqref{equation.forbidden2}
are exactly those that prevent the two paths from crossing from ($\mathcal{R}$ above $\mathcal{L}$) to
($\mathcal{L}$ above $\mathcal{R}$) or vice versa, with a single
exception: configuration~\eqref{equation.allowed} allows for a crossing
from ($\mathcal{R}$ above $\mathcal{L}$) to ($\mathcal{L}$ above
$\mathcal{R}$) on a left inner corner, and corresponds to an instance of a braid hook (indeed, at this position the paths will 
not share any steps, but rather pass orthogonally through one another).
Because of the initial and final conditions, such a crossing must happen exactly once.
\end{proof}

\begin{example}
\label{example.sliding paths1}
Superimposing the two sliding paths of Example~\ref{example.sliding paths}
\[
	t=
	\tableau[sY]{\tf \lb \overlay 1&\tf \lb \overlay 2&\lb \overlay 4&6&10&12\\
	\bl& \tf 3&\tf \lb \overlay5&\tf 7&11&13\\
	\bl&\bl&\lb \overlay 8&\tf \lb \overlay 9& \tf 14& 17\\
	\bl&\bl&\bl&\lb \overlay 15&\tf \lb \overlay 16&\tf 18\\
	\bl&\bl&\bl&\bl&\lb \overlay 19&\tf \lb \overlay 20\\
	\bl&\bl&\bl&\bl&\bl&\tf \lb \overlay 21}
\]
one notices that there is precisely one configuration of the form~\eqref{equation.allowed}, namely
with $x=3$, $b=5$ and $y=8$. Hence $\varphi^{-1}(t)$ is the braid $k=5$ in
\[
	t'=
	\tableau[sY]{1&2&3&7&11&13\\
	\bl& 4&5&8&12&14\\
	\bl&\bl&6&9&15&18\\
	\bl&\bl&\bl&10&16&19\\
	\bl&\bl&\bl&\bl&17&20\\
	\bl&\bl&\bl&\bl&\bl&21}\;.
\]
\end{example}

\begin{proof}[Proof of Theorem~\ref{thm:braid_moves_young_tableaux}]
Since by Lemma~\ref{lemma.phi bijection} $\varphi$ is a bijection, we have that
\[
	\# \{(k,t) \mid \text{$k$ a braid hook in $t\in \rSYT(\lambda)$}\} = \#\rSYT(\lambda)\;.
\]
This implies immediately that the expected number of braid hooks (which is the quotient of the two
numbers) is one.
\end{proof}

We now study how the two partial (inverse) promotion operators $\partial_k$ and $\partial_k^*$, that are
used in the bijection $\varphi$, interact. This enables us to deduce a variant of
Theorem~\ref{thm:braid_moves_young_tableaux} as a statement on full
promotion paths in right-justified tableaux.
Namely, let $t' \in \rSYT(\lambda)$, $k$ a braid hook in $t'$, and $t=\varphi(k,t')$.
When starting at a braid hook $k$, the operators $\partial_k$ and $\partial^*_k$ commute:
\begin{equation}
  \label{eq:promotion_diagram}
  \begin{tikzpicture}
    \matrix (m)[matrix of math nodes,row sep=3em,column sep=6em,minimum width=2em,text height=1.5ex,text depth=0.25ex]
     {
      t'   & t_r         \\
      t_l & t=\varphi(k,t')   \\
    };
    \path[->] (m-1-1) edge node [above] {$\partial_k$}   (m-1-2);
    \path[->] (m-2-1) edge node [above] {$\partial_k$}   (m-2-2);
    \path[->] (m-1-1) edge node [left ] {$\partial^*_k$} (m-2-1);
    \path[->] (m-1-2) edge node [right ] {$\partial^*_k$} (m-2-2);
  \end{tikzpicture}
\end{equation}
The nice feature of this diagram is that $t_r$ is obtained from $t_l$
by applying a full promotion operator: $t_r=t_l.\partial$.
Hence, on the $t_l$ side, we can focus on the
combinatorics of just the usual promotion path.

\begin{example}
\label{example.sliding paths2}
Continuing Example~\ref{example.sliding paths1} we obtain the commutative diagram:
\newsavebox{\shiftedtableautp}
\sbox{\shiftedtableautp}{
	$t'=$
	\tableau[sY]{1&2&3&7&11&13\\
	\bl& 4&5&8&12&14\\
	\bl&\bl&6&9&15&18\\
	\bl&\bl&\bl&10&16&19\\
	\bl&\bl&\bl&\bl&17&20\\
	\bl&\bl&\bl&\bl&\bl&21}
}
\newsavebox{\shiftedtableautl}
\sbox{\shiftedtableautl}{
$t_l=$
\tableau[sY]{
  \tf 1 & \tf 2 & 4 & 7 & 11 & 13\\
  \bl & \tf 3 & \tf 5 & 8 & 12 & 14\\
  \bl & \bl & \tf 6 & \tf 9 & 15 & 18\\
  \bl & \bl & \bl & \tf 10 & \tf 16 & 19\\
  \bl & \bl & \bl & \bl & \tf 17 & \tf 20\\
  \bl & \bl & \bl & \bl & \bl & \tf 21}
}
\newsavebox{\shiftedtableautr}
\sbox{\shiftedtableautr}{
$t_r=$
\tableau[sY]{
  \lb\overlay 1 & \lb\overlay 2 & 3 & 6 & 10 & 12\\
  \bl & \lb\overlay 4 & \lb\overlay 5 & 7 & 11 & 13\\
  \bl & \bl & \lb\overlay 8 & \lb\overlay 9 & 14 & 17\\
  \bl & \bl & \bl & \lb\overlay 15 & \lb\overlay 16 & 18\\
  \bl & \bl & \bl & \bl & \lb\overlay 19 & \lb\overlay 20\\
  \bl & \bl & \bl & \bl & \bl & \lb\overlay 21}
}
\newsavebox{\shiftedtableaut}
\sbox{\shiftedtableaut}{
	$t=$
	\tableau[sY]{\tf \lb \overlay 1&\tf \lb \overlay 2&\lb \overlay 4&6&10&12\\
	\bl& \tf 3&\tf \lb \overlay5&\tf 7&11&13\\
	\bl&\bl&\lb \overlay 8&\tf \lb \overlay 9& \tf 14& 17\\
	\bl&\bl&\bl&\lb \overlay 15&\tf \lb \overlay 16&\tf 18\\
	\bl&\bl&\bl&\bl&\lb \overlay 19&\tf \lb \overlay 20\\
	\bl&\bl&\bl&\bl&\bl&\tf \lb \overlay 21}
}
\begin{displaymath}
  \begin{tikzpicture}
    \matrix (m) [matrix of math nodes,row sep=3em,column sep=5em,minimum width=2em] {
      \usebox{\shiftedtableautp}
      &
      \usebox{\shiftedtableautr}
      \\
      \usebox{\shiftedtableautl}
      &
      \usebox{\shiftedtableaut}
  \\
    };
    \path[->] (m-1-1) edge node [above] {$\partial_5$}   (m-1-2);
    \path[->] (m-2-1) edge node [above] {$\partial_5$}   (m-2-2);
    \path[->] (m-1-1) edge node [left ] {$\partial^*_5$} (m-2-1);
    \path[->] (m-1-2) edge node [right ] {$\partial^*_5$} (m-2-2);
  \end{tikzpicture}
\end{displaymath}
Note that the promotion path of $t_l$ is made of the first half of the
promotion path of $t$ and the second half of the inverse promotion
path of $t$. Note also that, viewing the promotion path of $t_l$ as a
Dyck path, it has a peak of height one with corresponding values in the
tableau of the form $(*,k,k+1)$ (here $k=5$).
\end{example}

The tableau in Example~\ref{example.sliding paths2} was of shifted staircase shape,
so that the promotion and inverse promotion paths could easily be viewed as Dyck paths.
For a general right-justified tableau $t\in\rSYT(\lambda)$, we define the analogous notion
of a \defn{left partial braid hook} to be an inner corner with values $(*,k,k+1)$ of $t_l$ 
that lies on the promotion path. The symmetric situation appears in $t_r$ and we define a 
\defn{right partial hook} to be an inner corner with values $(k-1,k,*)$ of $t_r$ 
that lies on the inverse promotion path. We thus obtain the following corollary to
Theorem~\ref{thm:braid_moves_young_tableaux}.

\begin{corollary}
  \label{corollary.partial hooks}
  The commutative diagram~\eqref{eq:promotion_diagram} gives bijections between:
  \begin{enumerate}
  \item Pairs $(k,t')$ where $t' \in \rSYT(\lambda)$ and $k$ is a braid hook of $t'$.
  \item Pairs $(k,t_l)$ where $t_l \in \rSYT(\lambda)$ and $k$ is a left partial braid hook of $t_l$.
  \item Pairs $(k,t_r)$ where $t_r \in \rSYT(\lambda)$ and $k$ is a right partial braid hook of $t_r$.
  \item Right-justified tableaux $t \in \rSYT(\lambda)$.
  \end{enumerate}
  In particular, the number of left partial braid hooks in all tableaux in $\rSYT(\lambda)$ has expected
  value one.
\end{corollary}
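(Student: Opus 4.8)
The plan is to extract all four bijections from the single commutative diagram~\eqref{eq:promotion_diagram}, using $\varphi$ as the backbone and transporting the braid-hook condition along the two partial (inverse) promotion operators. The bijection between~(1) and~(4) is $\varphi$ itself, already furnished by Lemma~\ref{lemma.phi bijection}: it sends $(k,t')$ to $t=\varphi(k,t')=t'.\partial^*_k\partial_k$, and $\varphi^{-1}(t)$ recovers a unique braid-hook index $k$ together with $t'$. For the other two bijections I would use that, for each \emph{fixed} $k$, both $\partial_k$ and $\partial^*_k$ are bijections of $\rSYT(\lambda)$, being composites of the involutions $\tau_i$ of~\eqref{equation.tau}. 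Hence $(k,t')\mapsto(k,t'.\partial^*_k)=(k,t_l)$ and $(k,t')\mapsto(k,t'.\partial_k)=(k,t_r)$ are automatically bijections between \emph{all} such pairs; the only thing to check is that they carry the subset where $k$ is a braid hook of $t'$ onto the subsets where $k$ is a left, respectively right, partial braid hook.

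The heart of the matter is the path bookkeeping illustrated in Example~\ref{example.sliding paths2}. Set $t=\varphi(k,t')$. The proof of Lemma~\ref{lemma.phi bijection} shows that $k$ is a braid hook of $t'$ exactly when the promotion path $\mathcal{L}$ and the inverse promotion path $\mathcal{R}$ of $t$ cross once, in the left inner corner of configuration~\eqref{equation.allowed}, the crossing cell carrying the value $k$. Now $t_l=t'.\partial^*_k=t.\partial_k^{-1}$, and $\partial_k^{-1}=\tau_{|\lambda|-1}\cdots\tau_k$ only rearranges the entries $\ge k$, sliding them along the terminal segment of $\mathcal{R}$; in particular $t$ and $t_l$ have identical entries below $k$. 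From this I would deduce the structural claim that the promotion path of $t_l$ is the concatenation of the initial segment of $\mathcal{L}(t)$ up to the crossing cell with the terminal segment of $\mathcal{R}(t)$ beyond it: the entries $<k$ are untouched so the initial segments of the two promotion paths agree up to the value-$k$ cell, while the reverse slide forces the continuation to track $\mathcal{R}(t)$. At the crossing the path of $t_l$ thus arrives along $\mathcal{L}(t)$ and departs along $\mathcal{R}(t)$, making an inner corner (a peak) whose three cells carry the values $(*,k,k+1)$ --- exactly a left partial braid hook at $k$. Combined with the uniqueness of the crossing from Lemma~\ref{lemma.phi bijection}, this shows $k$ is a braid hook of $t'$ if and only if $k$ is a left partial braid hook of $t_l$, yielding the bijections (1)$\leftrightarrow$(2)$\leftrightarrow$(4).

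The bijection with~(3) is the mirror image of this argument under the symmetry of~\eqref{eq:promotion_diagram} that interchanges promotion with inverse promotion, $\partial_k$ with $\partial^*_k$, and left with right. Writing $t_r=t.(\partial^*_k)^{-1}$, one notes that $t$ and $t_r$ share all entries exceeding $k$, and the same bookkeeping shows that the inverse promotion path of $t_r$ turns at the crossing cell with values $(*,k,k+1)$, i.e.\ that $k$ is a right partial braid hook of $t_r$ if and only if $k$ is a braid hook of $t'$. Passing through $\varphi^{-1}$ to recover $k$ from $t$, the four sets are then in mutual bijection. Finally, since~(2) is in bijection with~(4)$=\rSYT(\lambda)$, the number of pairs $(k,t_l)$ with $k$ a left partial braid hook equals $\#\rSYT(\lambda)$; dividing the total count of left partial braid hooks over all tableaux by $\#\rSYT(\lambda)$ gives expected value one, exactly as in the proof of Theorem~\ref{thm:braid_moves_young_tableaux}.

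The step I expect to be the main obstacle is the structural claim of the second paragraph: that partial inverse promotion $\partial_k^{-1}$ genuinely glues the initial part of $\mathcal{L}(t)$ to the terminal part of $\mathcal{R}(t)$ and produces a bona fide inner corner at $k$ with consecutive values $k,k+1$, and no spurious turn elsewhere. Pinning this down requires tracking the jeu-de-taquin slides cell by cell in a neighborhood of the crossing; I expect the forbidden configurations~\eqref{equation.forbidden1} and~\eqref{equation.forbidden2} from the proof of Lemma~\ref{lemma.phi bijection} to be precisely what precludes extra peaks, so that the single crossing guaranteed there translates into a single left partial braid hook.
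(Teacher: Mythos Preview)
Your proposal is correct and follows the paper's own approach. The paper does not supply a separate proof of the corollary; it is stated as an immediate consequence of the commutative diagram~\eqref{eq:promotion_diagram}, the observation (made in Example~\ref{example.sliding paths2}) that the promotion path of $t_l$ is built from the first half of $\mathcal{L}(t)$ and the second half of $\mathcal{R}(t)$, and the ensuing definitions of left and right partial braid hooks. Your write-up simply makes explicit the path bookkeeping that the paper leaves to the reader, and your identification of the ``gluing'' claim as the only nontrivial step is accurate.
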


\section{Half-right-justified tableaux}
\label{sec:half_right_justified}

In this section, we turn our attention to shifted SYT of \defn{half-right-justified shape}. An SYT $t$ is half-right-justified 
of shape $\lambda = (\lambda_1,\lambda_2,\ldots,\lambda_\ell)$ if $\lambda_1>\lambda_2 > \cdots > \lambda_\ell$ 
are strictly decreasing and $t$ is justified so that the rightmost cell of each row is one step below and to the left of the 
rightmost cell of the previous row. We denote the set of half-right-justified SYT of shape $\lambda$
by $\hrSYT(\lambda)$.

This definition is motivated by the fact that tableaux of these shapes can be adjoined to their reflection to create tableaux of right-justified shapes, to which the results of Section \ref{section.right-justified} apply. See Figure \ref{figure.half right} for an example. Braid hooks in half-right-justified tableaux are still defined as in Definition~\ref{definition.braid hook} (with $\ShSYT(\Delta_n)$ replaced by $\hrSYT(\lambda)$).

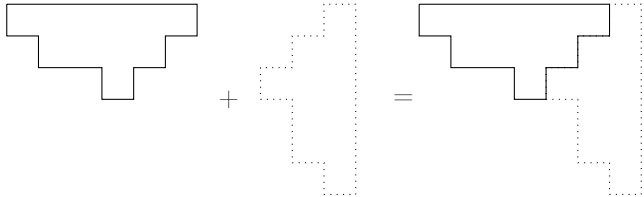
\begin{figure}
\begin{tikzpicture}[scale=\squaresize / 1cm * 1pt]
\draw (0,3) -- (6,3) \stepd \stepl \stepd \stepl \stepd \stepl \stepu \stepl \stepl \stepu \stepl \stepu;
\draw (7,0) node {+}; 
\begin{scope}[xscale=-1, rotate = 90, shift={(-3,8)}, dotted]
\draw (0,3) -- (6,3) \stepd \stepl \stepd \stepl \stepd \stepl \stepu \stepl \stepl \stepu \stepl \stepu;
\end{scope}
\draw (12.5,0) node {=};
\begin{scope}[shift={(13,0)}]
\draw (0,3) -- (6,3) \stepd \stepl \stepd \stepl \stepd \stepl \stepu \stepl \stepl \stepu \stepl \stepu;
\begin{scope}[xscale=-1, rotate = 90, shift={(-3,4)}, dotted]
\draw (0,3) -- (6,3) \stepd \stepl \stepd \stepl \stepd \stepl \stepu \stepl \stepl \stepu \stepl \stepu;
\end{scope}
\end{scope}
\end{tikzpicture}
\caption{A half-right-justified shape is joined to its reflection to create a right-justified shape.}
\label{figure.half right}
\end{figure}

As a specific example, it is natural to look at half-right-justified tableaux of trapezoidal shape, which coincide with shifted tableaux of trapezoidal shape.  These are SYT of shape $\Delta_n^t = (2n+1, 2n-1, \dots, 5,3,1)$, justified so that the center cells of each row are in the same column. Example~\ref{example.trapezoidal} illustrates such an SYT.

By a theorem of M.~Haiman, SYT of shifted trapezoidal shape are in bijection with the set of all reduced words for the longest element in type $B$~\cite{haiman.1992}, although we no longer have the interpretation as braid moves on these words.  Regardless, by the heap construction of Section~\ref{section.heaps}, it is clear that the reduced words in the commutation class of $\prod_{i=1}^{2n-1}s_{i}s_{i-2}\cdots s_{2-(i\mod 2)} \in \mathfrak{S}_{2n}$ are in bijection with such tableaux (and that braid relations in the words correspond to braid hooks in the tableaux).

\begin{example}
\label{example.trapezoidal}
The following trapezoidal tableau is in $\hrSYT(\Delta_2^t)$ 
\[
	\tableau[sY]{1&2&3&4&9\\ \bl&\darkblue{5}&\darkblue{6}&8&\bl \\ \bl&\bl&\darkblue{7}&\bl&\bl \\}\;.
\]
It contains one braid hook, shown in blue. The letters $\{6,7,8\}$ do not form a braid hook --- by definition, braid hook 
configurations can only occur on the lower left boundary.
\end{example}

We prove the following theorem.
\begin{theorem}
	The expected number of braid hooks in $\hrSYT(\lambda)$ is at most one half. 
	If $\lambda_1 \geq \lambda_2 + 2$ and $\lambda_\ell = 1$, then 
	the expected number of braid hooks in $\hrSYT(\lambda)$ is exactly one half.
\label{thm:braid_moves_half_right_young_tableaux}
\end{theorem}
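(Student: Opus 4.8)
The plan is to reduce the half-right-justified case to the already-established right-justified result of Theorem~\ref{thm:braid_moves_young_tableaux} via the reflection construction illustrated in Figure~\ref{figure.half right}. Given $t \in \hrSYT(\lambda)$ with $\lambda_1 > \lambda_2 > \cdots > \lambda_\ell$, I would form the \emph{doubled tableau} $D(t)$ obtained by adjoining $t$ to a reflected copy of itself across the staircase boundary, producing a tableau whose underlying shape is right-justified of some shape $\mu = \mu(\lambda)$. The key point is that the map $t \mapsto D(t)$ should be an injection from $\hrSYT(\lambda)$ into $\rSYT(\mu)$, and I would want to identify precisely the image: those right-justified standard tableaux that are \emph{symmetric} under the reflection that swaps the two halves. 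Braid hooks of $t$, which by Example~\ref{example.trapezoidal} can only occur along the lower-left boundary, should correspond to braid hooks of $D(t)$ lying in the $t$-half, and each such hook is paired with a mirror-image braid hook in the reflected half.

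The main computation I would carry out is to relate the expected number of braid hooks in $\hrSYT(\lambda)$ to the corresponding expectation on the right-justified shape $\mu$. By Theorem~\ref{thm:braid_moves_young_tableaux}, when the hypotheses $\mu_1 > \mu_2$ and $\mu_\ell = 1$ hold, the expected number of braid hooks in $\rSYT(\mu)$ is exactly one. The symmetry of $D(t)$ means that braid hooks come in mirror pairs across the reflection axis, so each hook in $t$ contributes \emph{two} hooks to $D(t)$ (the hook and its reflection); heuristically this halves the count and yields the value one half for the symmetric tableaux. I would make this precise by first checking that the reflection-symmetric tableaux are exactly the doubled tableaux $D(t)$, then arguing that the bijection $\varphi$ of~\eqref{equation.varphi}, or rather the crossing analysis of the sliding paths $\L$ and $\R$ in Lemma~\ref{lemma.phi bijection}, respects the reflection symmetry. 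A symmetric tableau forces the promotion path $\L$ and the inverse promotion path $\R$ to be reflections of one another, so their single guaranteed crossing lies on the axis of symmetry and the hooks are counted in symmetric pairs.

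The ``at most one half'' portion and the sharp ``exactly one half'' portion under the hypotheses $\lambda_1 \geq \lambda_2 + 2$ and $\lambda_\ell = 1$ should be separated. The hypotheses on $\lambda$ are exactly what guarantees that the doubled shape $\mu$ satisfies the hypotheses $\mu_1 > \mu_2$ and $\mu_\ell = 1$ of Theorem~\ref{thm:braid_moves_young_tableaux}: the gap $\lambda_1 \geq \lambda_2 + 2$ ensures strict inequality $\mu_1 > \mu_2$ after reflection (since reflecting a shape with $\lambda_1 = \lambda_2 + 1$ would produce equal top two rows), and $\lambda_\ell = 1$ ensures $\mu_\ell = 1$. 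When these hold, the crossing argument forces the single crossing of $\L$ and $\R$ to sit on the symmetry axis, giving exactly one symmetric braid hook pair, hence expectation one half. When the hypotheses fail, the crossing may drift off the axis or be obstructed at the boundary, which can only \emph{decrease} the count, yielding the inequality.

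The hard part will be controlling the behavior of the sliding paths near the axis of symmetry, and in particular verifying that a braid hook of $t$ maps to a genuine braid-hook \emph{pair} in $D(t)$ rather than to a single hook straddling the axis or to a configuration that the right-justified count records differently. Concretely, I would need to check that the forbidden configurations~\eqref{equation.forbidden1} and~\eqref{equation.forbidden2}, together with the allowed crossing~\eqref{equation.allowed}, interact correctly with the reflection: a crossing exactly on the axis corresponds to a boundary braid hook of $t$, while an off-axis crossing would have to be matched by its mirror, forcing crossings to come in pairs except possibly for one on-axis crossing. Making the parity bookkeeping rigorous—showing that symmetric tableaux always admit an \emph{odd} number of axis-related crossing events that collapse to a single hook in $t$—is where the argument requires the most care, and where the two hypotheses $\lambda_1 \geq \lambda_2 + 2$ and $\lambda_\ell = 1$ do their essential work.
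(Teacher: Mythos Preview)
Your doubling idea is the right instinct, but the specific construction and the deduction have genuine gaps.

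\textbf{The averaging step fails.} You want to pull back the expectation-one result from $\rSYT(\mu)$ via $D$. But $D$ lands only on the \emph{symmetric} tableaux in $\rSYT(\mu)$, and Theorem~\ref{thm:braid_moves_young_tableaux} gives an average over \emph{all} of $\rSYT(\mu)$. There is no reason the average on the symmetric subset agrees with the global one, so the ``halve one to get one half'' heuristic is not a proof. You seem to notice this and retreat to the crossing analysis of Lemma~\ref{lemma.phi bijection}, but then you are no longer using the expectation result at all.

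\textbf{The symmetry argument is internally inconsistent.} In your $D(t)=(t,t^\dagger)$ the reflection swaps $\L$ and $\R$, so the unique crossing must be fixed by the symmetry, i.e.\ lie on the axis. But a crossing on the axis is a single braid hook straddling the boundary between the two halves; it is \emph{not} a mirror pair, and it does not correspond to a braid hook of $t$ (those sit on the lower-left boundary of $t$, away from the axis). So the sentence ``the single guaranteed crossing lies on the axis \emph{and} the hooks are counted in symmetric pairs'' contradicts itself, and neither reading yields the count you want.

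\textbf{What is actually needed.} The paper works directly on $\hrSYT(\lambda)$: the map $\psi(k,t)=t.\partial_k^*\partial_k$ is injective, and $t$ lies in its image iff $\L$ and $\R$ cross in $t$. The halving then comes from an \emph{involution} on $\hrSYT(\lambda)$, namely evacuation $\epsilon$, together with the staircase pair $(t,(t.\epsilon)^\dagger)$ --- not $(t,t^\dagger)$. The point of inserting $\epsilon$ is precisely to make the paths $\L_s,\R_s$ of the pair restrict to $\L_1,\R_1$ on the $t$-half and to $\L_2,\R_2$ on the $(t.\epsilon)^\dagger$-half (this uses the Pon--Wang lemma relating the position of $|\lambda|$ in $t$ to the endpoint of $\L$ in $t.\epsilon$). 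One then shows the unique crossing in the pair lies entirely in one half or the other, hence exactly one of $t$ and $t.\epsilon$ has its paths cross. That is what gives $|\mathrm{image}(\psi)|=\tfrac12|\hrSYT(\lambda)|$. Your construction with $t^\dagger$ in place of $(t.\epsilon)^\dagger$ forces the crossing onto the axis and loses exactly this dichotomy; the missing ingredient is evacuation.
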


Note that half-right-justified tableaux of trapezoidal shape satisfy $\lambda_1=\lambda_2+2$ and
$\lambda_\ell = 1$, so Theorem~\ref{thm:braid_moves_half_right_young_tableaux} implies 
Theorem~\ref{thm:braid_moves_half_young_tableaux}.

\subsection{Proof of Theorem~\ref{thm:braid_moves_half_right_young_tableaux}: Injective Case}
\label{subsection.injective}
We use the techniques of Section~\ref{section.right-justified}. As in that section, we define a map 
\begin{equation}
	\psi \colon \{(k,t) \mid \text{$k$ a braid hook in $t\in \hrSYT(\lambda)$}\} 
	\to \hrSYT(\lambda)
\end{equation}
using the partial promotion and inverse promotion operators $\partial_k := \tau_k \tau_{k+1} \cdots \tau_{|\lambda|-1}$ 
and $\partial^*_k := \tau_{k-1} \tau_{k-2} \cdots \tau_1$ such that
\[
	\psi(k,t) := t.\partial^*_k \partial_k\;.
\]

As before, we seek to understand the image of the map $\psi$. Recall that on right-justified
tableaux of shape $\lambda$, the map $\varphi$ is a bijection, which shows that the expected number of braid 
hooks in a tableau in $\rSYT(\lambda)$ is one. To prove Theorem~\ref{thm:braid_moves_half_right_young_tableaux}, 
we will show that on $\hrSYT(\lambda)$ the map $\psi$ is an injection whose image is at most half of $\hrSYT(\lambda)$, 
and that if $\lambda_1 \ge \lambda_2+2$ and $\lambda_\ell = 1$, then the image of $\psi$ is exactly half of 
$\hrSYT(\lambda)$.

As in the proof of Lemma~\ref{lemma.phi bijection}, we consider an element $t \in \hrSYT(\lambda)$, and examine 
the promotion and inverse promotion paths $\mathcal{L}$ and $\mathcal{R}$. Each appearance of $t$ in the image 
of $\psi$ corresponds to a crossing from ($\mathcal{R}$ above $\mathcal{L}$) to ($\mathcal{L}$ above $\mathcal{R}$) 
on a left inner corner. It is impossible for the reverse crossing to occur, as configuration~\eqref{equation.forbidden1} is 
forbidden, so the paths $\mathcal{L}$ and $\mathcal{R}$ cross at most once in $t$, showing that $\psi$ is injective.

However, for $t \in \hrSYT(\lambda)$, it is no longer true that the paths $\mathcal{L}$ and $\mathcal{R}$ must cross. 
In the top left corner, the two paths overlap, and so by our convention we consider $\mathcal{R}$ to be above 
$\mathcal{L}$. The path $\mathcal{R}$ will end in the cell containing the largest letter $|\lambda|$, while the path 
$\mathcal{L}$ could end in a different lower right cell. Example~\ref{example.trapezoidal no intersection} illustrates 
this behavior.

\begin{example}
\label{example.trapezoidal no intersection}
In
\[
	\tableau[sY]{\tf \lb \overlay 1&\tf \lb \overlay 2&\lb \overlay 4&\lb \overlay 5&\lb \overlay 7&\lb \overlay 12&13 \\ 
	\bl&\tf 3&\tf 6&\tf 8&11&\lb \overlay 16&\bl\\ \bl&\bl&9&\tf 10&\tf 14&\bl&\bl\\
	\bl&\bl&\bl&15&\bl&\bl&\bl\\}
	\in \hrSYT(\Delta_3^t)
\]
the path $\mathcal{R}$ is always above the path $\mathcal{L}$.
\end{example}

A tableau $t$ appears in the image of $\psi$ if and only if the paths $\mathcal{L}$ and $\mathcal{R}$ of $t$ cross. 
Hence, to prove Theorem~\ref{thm:braid_moves_half_right_young_tableaux}, it suffices to show that $\mathcal{L}$ 
and $\mathcal{R}$ cross in at most half of the tableaux in $\hrSYT(\lambda)$, and exactly half when 
$\lambda_1 \ge \lambda_2+2$ and $\lambda_\ell = 1$. 

We will now work towards pairing elements of $\hrSYT(\lambda)$ in which the paths $\mathcal{L}$ and $\mathcal{R}$ 
cross with those in which the two paths do not cross. For general shapes $\lambda$, some tableaux in which the paths 
do not cross may remain unpaired, while if $\lambda_1\ge \lambda_2+2$ and $\lambda_\ell = 1$, every such tableau 
is paired. We will use the \defn{evacuation} and \defn{dual evacuation} maps, defined as 
\begin{equation*}
\begin{split}
	\epsilon &= (\tau_1\tau_2\cdots\tau_{|\lambda|-1})(\tau_1\tau_2\cdots\tau_{|\lambda|-2})\cdots(\tau_1\tau_2)(\tau_1)\;,\\
	\epsilon^* &= (\tau_{|\lambda|-1}\tau_{|\lambda|-2}\cdots\tau_1)(\tau_{|\lambda|-1}\tau_{|\lambda|-2}\cdots
	\tau_2)\cdots(\tau_{|\lambda|-1}\tau_{|\lambda|-2})(\tau_{|\lambda|-1})\;.
\end{split}
\end{equation*}
Here the $\tau_i$ are as defined in~\eqref{equation.tau} with $\rSYT(\lambda)$ replaced by $\hrSYT(\lambda)$.

In order to prove Theorem~\ref{thm:braid_moves_half_right_young_tableaux}, it suffices to show the following proposition.

\begin{proposition}
\label{proposition.cross pairing}
If in the tableau $t \in \hrSYT(\lambda)$, the paths $\mathcal{L}$ and $\mathcal{R}$ cross, then in the tableau 
$t.\epsilon$, the paths $\mathcal{L}$ and $\mathcal{R}$ do not cross. If $\lambda_1 \geq \lambda_2 + 2$ and 
$\lambda_\ell = 1$, then the converse is also true.
\end{proposition}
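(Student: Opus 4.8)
The plan is to show that evacuation $\epsilon$ flips the crossing status of the pair $(\mathcal{L},\mathcal{R})$, so that $\epsilon$ injects the crossing tableaux into the non-crossing ones, with the extra hypotheses upgrading this to a bijection. First I would record the reduction already implicit in the injective case treated in Section~\ref{subsection.injective}: since the configurations \eqref{equation.forbidden1} and \eqref{equation.forbidden2} are forbidden, the two paths can only cross from ($\mathcal{R}$ above $\mathcal{L}$) to ($\mathcal{L}$ above $\mathcal{R}$) and never back, while the start-state is pinned to $\mathcal{R}$ above $\mathcal{L}$ by the horizontal overlap at the top-left (present because $\lambda_1>\lambda_2$). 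Consequently, whether $t$ crosses is governed entirely by the \emph{end-state}, i.e.\ by the relative position of the endpoint of $\mathcal{L}$ (the cell where the promotion hole settles) and the endpoint of $\mathcal{R}$ (the cell of $|\lambda|$): the paths cross precisely when $\mathcal{L}$ finishes above $\mathcal{R}$. The whole proposition thus reduces to showing that $\epsilon$ reverses this end-state comparison.

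Next I would establish the key lemma describing how $\epsilon$ acts on the two paths. The engine is the standard fact that evacuation conjugates the Bender--Knuth involutions by index reversal, $\epsilon\,\tau_i\,\epsilon=\tau_{|\lambda|-i}$, which immediately yields $\epsilon\,\partial\,\epsilon=\partial^*$ and $\epsilon\,\partial^*\,\epsilon=\partial$, and more generally intertwines each partial operator $\partial_k$ with a partial inverse promotion operator $\partial^*_{k'}$. Translating these identities back into jeu-de-taquin trajectories, I would argue that $\epsilon$ interchanges the promotion and inverse promotion paths, carrying the undirected path $\mathcal{L}(t.\epsilon)$ to $\mathcal{R}(t)$ and $\mathcal{R}(t.\epsilon)$ to $\mathcal{L}(t)$, with the roles of the two labels exchanged. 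Feeding this into the convention that fixes the start-state at $\mathcal{R}$ above $\mathcal{L}$, a forward crossing in $t$ becomes a reverse transition in $t.\epsilon$; since reverse transitions cannot be completed to a forward crossing (again by \eqref{equation.forbidden1} and \eqref{equation.forbidden2}), $t.\epsilon$ does not cross. This proves the first, unconditional assertion, and because $\epsilon$ is an involution it shows that $t$ and $t.\epsilon$ are never simultaneously crossing; hence $\epsilon$ embeds the crossing tableaux into the non-crossing ones, which is exactly the injective case of Theorem~\ref{thm:braid_moves_half_right_young_tableaux}.

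For the converse I would use the hypotheses $\lambda_1\ge\lambda_2+2$ and $\lambda_\ell=1$ to rule out the degenerate tableaux that remain unpaired for general shapes. The condition $\lambda_\ell=1$ forces the bottom of the shape to be a single cell, so both paths end with genuine vertical overlap steps at the cell of $|\lambda|$, making the end-state comparison unambiguous; the condition $\lambda_1\ge\lambda_2+2$ plays the symmetric role at the top, guaranteeing enough horizontal overlap that the start-state is rigidly $\mathcal{R}$ above $\mathcal{L}$ even after applying $\epsilon$. Under these constraints I would show that if $t$ does \emph{not} cross, then its end-state has $\mathcal{R}$ strictly above $\mathcal{L}$, and the path-interchange of the previous paragraph turns this into $\mathcal{L}$ above $\mathcal{R}$ in $t.\epsilon$, i.e.\ a genuine forward crossing. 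Thus $\epsilon$ also maps non-crossing tableaux to crossing ones, so it restricts to a bijection between the two classes and the image of $\psi$ is exactly half of $\hrSYT(\lambda)$.

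The main obstacle is the middle step: unlike the rectangular or self-conjugate case, a half-right-justified shape is not symmetric, so $\epsilon$ has no simple ``rotate and complement'' description, and one cannot read off the transformed paths purely geometrically. The careful work is to promote the algebraic identities $\epsilon\,\tau_i\,\epsilon=\tau_{|\lambda|-i}$ into a statement about the actual cells traversed by $\mathcal{L}$ and $\mathcal{R}$, and to keep the orientation-dependent ``above'' conventions (horizontal overlaps favoring $\mathcal{R}$, vertical overlaps favoring $\mathcal{L}$, as in Lemma~\ref{lemma.phi bijection}) consistent through this interchange. Once the path-interchange lemma is in hand, both directions of the proposition follow from the endpoint bookkeeping described above, and since the crossing tableaux are exactly the image of $\psi$, Theorem~\ref{thm:braid_moves_half_right_young_tableaux} follows.
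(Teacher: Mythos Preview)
Your central lemma is false. You claim that evacuation interchanges the promotion and inverse promotion paths, i.e.\ that $\mathcal{L}(t.\epsilon)=\mathcal{R}(t)$ and $\mathcal{R}(t.\epsilon)=\mathcal{L}(t)$ as sets of cells. Take the tableau $t$ of Example~\ref{example.trapezoidal} (shape $(5,3,1)$); using the paper's own computation of $t.\epsilon$ in Example~\ref{example.staircase pair} one finds
\[
\mathcal{L}(t)=\{(1,1),(1,2),(1,3),(1,4),(2,4)\},\qquad
\mathcal{R}(t.\epsilon)=\{(1,1),(1,2),(2,2),(2,3),(2,4)\},
\]
which are distinct. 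So the path-interchange statement cannot be promoted from the algebraic identity $\epsilon\partial\epsilon=\partial^*$; a half-right-justified shape carries no cell-level involution realizing $\epsilon$, and the identity $\epsilon\tau_i\epsilon=\tau_{|\lambda|-i}$ (even when it holds) constrains only the \emph{values} that get swapped, not the cells traversed by jeu-de-taquin.

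What \emph{is} true, and is all your endpoint reduction actually needs, is that the \emph{endpoints} swap: $\mathcal{L}(t.\epsilon)$ terminates at the cell of $|\lambda|$ in $t$, and dually. This is exactly Lemma~\ref{lemma.pw1} (Pon--Wang), and together with your first-paragraph observation that crossing is detected by the end-state it gives a viable alternative argument---but you would still have to treat the degenerate case in which the two endpoints coincide, and you have not isolated this. The paper sidesteps all of this with a genuinely different construction: it reflects-and-reverses $t.\epsilon$ to a tableau $(t.\epsilon)^\dagger$ of the complementary shape, glues it to $t$ to form a single right-justified ``staircase pair,'' and proves (via Lemmas~\ref{lemma.pw1}--\ref{lemma.mid overlap}) that the big $\mathcal{L}$ and $\mathcal{R}$ restrict to the small ones on each half. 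Since the glued shape satisfies the hypotheses of Lemma~\ref{lemma.phi bijection}, its paths cross exactly once, hence in exactly one half; pulling back through $\dagger$ gives the proposition. The hypotheses $\lambda_1\ge\lambda_2+2$ and $\lambda_\ell=1$ enter not to rigidify the start-state as you suggest, but to ensure the glued shape is right-justified with strict first row and that no braid hook straddles the seam.
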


Given an element $t \in \hrSYT(\lambda)$, we define the \defn{conjugate} of $t$, denoted by $t^\dagger$, as the 
tableau obtained by reflecting $t$ in the diagonal from bottom left to top right and then reversing the order of the entries. 
The tableau thus obtained has rows and columns in increasing order because the reflection takes rows and columns 
in increasing order to columns and rows in decreasing order respectively, and then reversing the entries produces 
columns and rows in increasing order. Hence $t^\dagger$ is an SYT, although not of the same shape as $t$.
Example~\ref{example.conjugation} illustrates this operation.

\begin{example}
\label{example.conjugation}
\[
	t = \tableau[sY]{1&2&3&4&9\\ \bl&5&6&8&\bl \\ \bl&\bl&7&\bl&\bl\\} \longrightarrow 
	\tableau[sY]{\bl&\bl&9\\ \bl&8&4 \\ 7&6&3\\ \bl&5&2\\ \bl&\bl&1\\} \longrightarrow 
	\tableau[sY]{\bl&\bl&1\\ \bl&2&6 \\ 3&4&7\\ \bl&5&8\\ \bl&\bl&9\\} = t^\dagger \;.
\]
\end{example}

We will need the following relations between promotion, evacuation and conjugation.

\begin{lemma}
\label{lemma.identities}
The operators $\partial, \partial^*, \epsilon, \epsilon^*$ and $\dagger$ obey the following relations:
\begin{align*}
\partial^* & = \partial^{-1} \\
\dagger^2 & = 1 \\
\dagger\partial\dagger & = \partial^* \\
\dagger\epsilon\dagger & = \epsilon^* \\
\epsilon^2 & = (\epsilon^*)^2 = 1 \\
\epsilon\partial & = \partial^*\epsilon \\
\epsilon^*\partial & = \partial^*\epsilon^* 
\end{align*}
\end{lemma}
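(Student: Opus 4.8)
The plan is to verify each of the seven identities in Lemma~\ref{lemma.identities} by working at the level of the generators $\tau_i$, exploiting the fact that the $\tau_i$ satisfy braid-like relations on $\rSYT(\lambda)$ (and on $\hrSYT(\lambda)$). The first identity $\partial^* = \partial^{-1}$ is already asserted in the text surrounding the definitions, so I would simply cite that $\partial = \tau_1\tau_2\cdots\tau_{|\lambda|-1}$ is Sch\"utzenberger promotion and $\partial^* = \tau_{|\lambda|-1}\cdots\tau_1$ is its inverse. For $\dagger^2 = 1$, I would argue directly from the geometric definition of conjugation: reflecting in the anti-diagonal and reversing entries is visibly an involution, since reflecting twice is the identity and reversing entries twice (the reversal $i \mapsto |\lambda|+1-i$) is an involution, and the two operations commute in their effect on cells and labels.

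\textbf{Conjugation conjugacy identities.} For $\dagger\partial\dagger = \partial^*$ and $\dagger\epsilon\dagger = \epsilon^*$, I would use the standard fact from the theory of jeu-de-taquin that conjugating a tableau interchanges the roles of ``smallest letter'' and ``largest letter'' and reverses the sliding directions. Concretely, $\dagger$ sends the generator $\tau_i$ to $\tau_{|\lambda|-i}$: swapping entries $i, i+1$ in $t$ corresponds, after the reversal $j \mapsto |\lambda|+1-j$, to swapping entries $|\lambda|-i, |\lambda|-i+1$, and the anti-diagonal reflection preserves the validity condition (a swap is legal in $t$ iff the conjugate swap is legal in $t^\dagger$, since the shape constraints transform correctly). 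Thus conjugation by $\dagger$ implements the index reversal $i \mapsto |\lambda|-i$ on the generators, which immediately sends $\partial = \tau_1\cdots\tau_{|\lambda|-1}$ to $\tau_{|\lambda|-1}\cdots\tau_1 = \partial^*$ and sends the nested product defining $\epsilon$ to that defining $\epsilon^*$.

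\textbf{Evacuation relations.} The identities $\epsilon^2 = (\epsilon^*)^2 = 1$ and the intertwining relations $\epsilon\partial = \partial^*\epsilon$, $\epsilon^*\partial = \partial^*\epsilon^*$ are the classical relations satisfied by evacuation and promotion, originally due to Sch\"utzenberger. The cleanest route is to appeal to the realization of the symmetric-group-like action: the operators $\tau_1, \ldots, \tau_{|\lambda|-1}$ generate an action of the $0$-Hecke monoid (or, where they are involutions on the relevant tableaux, of $\mathfrak{S}_{|\lambda|}$ in the Bender--Knuth / Lascoux--Sch\"utzenberger sense), and $\epsilon$ is exactly the image of the longest element $w_0$ written in the reduced form $(s_1\cdots s_{|\lambda|-1})(s_1\cdots s_{|\lambda|-2})\cdots(s_1)$. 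Then $\epsilon^2 = 1$ reflects $w_0^2 = 1$, and $\epsilon\partial = \partial^*\epsilon$ reflects the conjugation relation $w_0 (s_1\cdots s_{|\lambda|-1}) w_0^{-1} = s_{|\lambda|-1}\cdots s_1$ under the automorphism $s_i \mapsto s_{|\lambda|-i}$, which is precisely $\dagger$-conjugation. I would derive $\epsilon\partial = \partial^*\epsilon$ by combining $\dagger\partial\dagger = \partial^*$ with $\dagger\epsilon\dagger = \epsilon^*$ and the evacuation-composition identity, or alternatively cite~\cite{stanley.2009}.

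\textbf{Main obstacle.} The subtle point is that the $\tau_i$ are not literally a group action: each $\tau_i$ fixes tableaux where the swap is illegal, so they are idempotent involutions rather than free generators, and braid relations among them must be checked to hold \emph{as maps on the specific tableau sets} rather than abstractly. The hard part will therefore be confirming that $\dagger$ genuinely intertwines $\tau_i$ with $\tau_{|\lambda|-i}$ including on the fixed points (i.e. that the swap $i,i+1$ is legal in $t$ exactly when $|\lambda|-i, |\lambda|-i+1$ is legal in $t^\dagger$), since the half-right-justified shapes are not symmetric under conjugation. Once that generator-level intertwining is established, all the evacuation identities follow formally from the known $\mathfrak{S}_{|\lambda|}$-relations, so I expect the bulk of the work to be this single compatibility check between $\dagger$ and the legality conditions for the elementary transpositions.
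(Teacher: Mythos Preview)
Your approach is essentially the same as the paper's: the paper also argues the first two identities directly from the definitions, deduces the conjugation identities from the generator-level relation $\dagger\tau_i\dagger=\tau_{|\lambda|-i}$, cites~\cite[Theorem~2.1]{stanley.2009} for $\epsilon^2=1$ and $\epsilon\partial=\partial^*\epsilon$, and obtains the $\epsilon^*$ versions by conjugating with~$\dagger$. Your ``main obstacle'' is dispatched by the paper in a single sentence---since $\dagger$ is a label-reversing poset anti-isomorphism, the swap of $i,i+1$ is legal in $t$ iff the swap of $|\lambda|-i,|\lambda|-i+1$ is legal in $t^\dagger$, regardless of whether the two shapes coincide---so this is not the difficulty you anticipate; and your suggested derivation of $\epsilon\partial=\partial^*\epsilon$ from the two $\dagger$-conjugacy identities alone does not go through (those only relate starred to unstarred operators, not promotion to its inverse through $\epsilon$), so the citation is needed.
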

\begin{proof}
That $\partial^* = \partial^{-1}$ is immediate from their definitions in terms of the involutions $\tau_i$. The conjugation 
map $\dagger$ is self-inverse because both reflecting the tableau and reversing the entries are self-inverse, and 
commute with one another. The map $\dagger$ reverses labels but otherwise preserves the poset structure, so 
we have that $\dagger\tau_i\dagger = \tau_{|\lambda|-i}$. Hence $\dagger\partial\dagger = \partial^*$ and 
$\dagger\epsilon\dagger = \epsilon^*$.

It is a result of Sch\"{u}tzenberger that $\epsilon^2 = 1$, see for example~\cite[Theorem 2.1]{stanley.2009}. 
The dual evacuation operator $\epsilon^*$ is the conjugate of $\epsilon$ by $\dagger$, so it is also an involution. 
That $\epsilon\partial = \partial^*\epsilon$ is also stated in~\cite[Theorem 2.1]{stanley.2009}. The dual statement, 
that $\epsilon^*\partial = \partial^*\epsilon^*$, may be obtained by conjugating the previous identity by $\dagger$.  
\end{proof}

Given $t \in \hrSYT(\lambda)$, let us define the \defn{staircase pair} $(t, (t.\epsilon)^\dagger)$ as follows. Take 
$t$ and $t^\dagger$, and add $|\lambda|$ to each entry in $t^\dagger$. As in Figure \ref{figure.half right}, align 
the two tableaux so that the top cell of $t^\dagger$ is to the right of the rightmost cell of $t$, and consider the union 
of these two tableaux as a larger tableau. Because $t$ and $t^\dagger$ are SYT, the staircase pair 
$(t, (t.\epsilon)^\dagger)$ is an SYT. This construction is illustrated in the next example.

\begin{example}
\label{example.staircase pair}
With $t$ as in Example~\ref{example.conjugation}, we have
\[
	t.\epsilon = \tableau[sY]{1&2&3&4&6\\ \bl&5&7&9&\bl \\ \bl&\bl&8&\bl&\bl\\}, 
	\quad (t.\epsilon)^\dagger = \tableau[sY]{\bl&\bl&4\\ \bl&1&6 \\ 2&3&7\\ \bl&5&8\\ \bl&\bl&9\\}\;, 
	\quad (t.\epsilon)^\dagger+9 = \tableau[sY]{\bl&\bl&13\\ \bl&10&15 \\ 11&12&16\\ \bl&14&17\\ \bl&\bl&18\\}\;,
\]
so that
\[
	(t, (t.\epsilon)^\dagger) = \tableau[sY]{1&2&3&4&9&13\\ \bl&5&6&8&10&15 \\ \bl&\bl&7&11&12&16\\ 
	\bl&\bl&\bl&\bl&14&17\\ \bl&\bl&\bl&\bl&\bl&18\\}\;.
\]
\end{example}

\begin{remark}
We could also have defined staircase pairs using dual evacuation, because $(t.\epsilon)^\dagger = t^\dagger.\epsilon^*$.
\end{remark}

Note that a staircase pair $(t, (t.\epsilon)^\dagger)$ is an SYT of right-justified shape, so that by the results of 
Section~\ref{section.right-justified} the paths $\mathcal{L}$ and $\mathcal{R}$ cross exactly once. For $t$ of general 
shape $\lambda$, this crossing might take place within the subtableau $t$, within the subtableau $(t.\epsilon)^\dagger$, 
or overlapping each of the two. For $t \in \hrSYT(\Delta_n^t)$, though, the crossing must be either entirely within the 
subtableau $t$, or entirely within the subtableau $(t.\epsilon)^\dagger$, because there are no braid hooks crossing 
the boundary between the two subtableaux.

Let $t \in \hrSYT(\lambda)$. We will now examine the relation between the paths $\L$ and $\R$ of a staircase pair 
$(t, (t.\epsilon)^\dagger)$ and the paths $\L$ and $\R$ of the subtableaux $t$ and $(t.\epsilon)^\dagger$. 
Let the promotion and inverse promotion paths of the staircase pair $(t, (t.\epsilon)^\dagger)$ be denoted by 
$\mathcal{L}_s$ and $\mathcal{R}_s$, while the promotion and inverse promotion paths in the subtableau $t$ are 
denoted by $\mathcal{L}_1$ and $\mathcal{R}_1$ and the promotion and inverse promotion paths in the subtableau 
$(t.\epsilon)^\dagger$ are denoted by $\mathcal{L}_2$ and $\mathcal{R}_2$.

We prove Proposition~\ref{proposition.cross pairing} via the following sequence of lemmas.

\begin{lemma}
The restriction of $\mathcal{L}_s$ to the subtableau $t$ is exactly $\mathcal{L}_1$. Likewise, the restriction of 
$\mathcal{R}_s$ to the subtableau $(t.\epsilon)^\dagger$ is exactly $\mathcal{R}_2$.
\end{lemma}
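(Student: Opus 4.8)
The plan is to prove the first assertion (about $\mathcal{L}$) directly and then obtain the second (about $\mathcal{R}$) by the evident left--right/up--down symmetry, since inverse promotion is the mirror image of promotion. The whole argument rests on one structural feature of a staircase pair: the subtableau $t$ carries the entries $\{1,\dots,|\lambda|\}$ while $(t.\epsilon)^\dagger$ (after adding $|\lambda|$) carries $\{|\lambda|+1,\dots,2|\lambda|\}$, so \emph{every} entry of $t$ is strictly smaller than \emph{every} entry of $(t.\epsilon)^\dagger$.

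First I would record the following ``monotone boundary'' observation. Since the staircase pair is a genuine SYT, in which right and lower neighbors carry strictly larger values, no cell of $t$ can occur directly to the right of or directly below a cell of $(t.\epsilon)^\dagger$: such a cell would be forced to carry a value larger than its $(t.\epsilon)^\dagger$-neighbor, contradicting that the entries of $t$ are the $|\lambda|$ smallest values. The consequence I need is that once the promotion sliding in the staircase pair enters $(t.\epsilon)^\dagger$ it can never return to $t$, so the cells of $\mathcal{L}_s$ lying in $t$ form a contiguous initial segment of the path.

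The heart of the argument is a step-by-step comparison of $\mathcal{L}_s$ with $\mathcal{L}_1$. The promotion path $\mathcal{L}_s$ starts by vacating the entry $1$, which sits in the top-left cell of $t$, so both paths begin at the same cell. At a generic step the empty cell slides in the smaller of its right and lower neighbors, and I would argue that as long as at least one of these neighbors lies in $t$, the slide chosen by $\mathcal{L}_s$ agrees with the one chosen by $\mathcal{L}_1$. If both neighbors lie in $t$ the comparison is literally the same as inside $t$ alone; and if exactly one neighbor lies in $t$ while the other lies in $(t.\epsilon)^\dagger$, then the $t$-neighbor, having value at most $|\lambda|$, is automatically the smaller one and is selected by $\mathcal{L}_s$, whereas in $t$ alone only that neighbor exists and is selected by $\mathcal{L}_1$. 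Thus $\mathcal{L}_s$ tracks $\mathcal{L}_1$ cell for cell until it reaches a cell with no right or lower neighbor inside $t$, which is precisely the cell where $\mathcal{L}_1$ terminates. Beyond that point $\mathcal{L}_s$ either halts or passes into $(t.\epsilon)^\dagger$ for good, so its restriction to $t$ is exactly $\mathcal{L}_1$.

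Finally I would note that the $\mathcal{R}$ statement follows from the mirror of this argument: inverse promotion removes the largest entry $2|\lambda|$, which lies in $(t.\epsilon)^\dagger$, and slides the \emph{larger} of the left and upper neighbors. Since the entries of $(t.\epsilon)^\dagger$ now dominate all entries of $t$, the path prefers $(t.\epsilon)^\dagger$-cells whenever they are available, and by the mirror-image monotone boundary (no cell of $(t.\epsilon)^\dagger$ lies to the left of or above a cell of $t$) it never re-enters $(t.\epsilon)^\dagger$ once it leaves. Hence the restriction of $\mathcal{R}_s$ to $(t.\epsilon)^\dagger$ equals $\mathcal{R}_2$. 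I expect the only place demanding care is the boundary bookkeeping in the main comparison: one must check that the path neither escapes $t$ prematurely (ruled out by the value comparison) nor re-enters it (ruled out by the monotone boundary), after which everything is a routine unwinding of the jeu-de-taquin definitions.
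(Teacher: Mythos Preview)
Your proposal is correct and follows essentially the same approach as the paper: both argue that since every entry of $t$ is smaller than every entry of $(t.\epsilon)^\dagger$, the promotion path in the staircase pair tracks the promotion path of $t$ step for step until it leaves $t$, and symmetrically for inverse promotion. Your write-up is in fact more careful than the paper's, explicitly separating the ``both neighbors in $t$'' and ``one neighbor in each subtableau'' cases and recording the monotone-boundary observation that rules out re-entry into $t$; the paper leaves these points implicit.
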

\begin{proof}
Both of the paths $\mathcal{L}_s$ and $\L_1$ can be constructed by starting at the cell containing 1 and 
continually moving down or to the right, to whichever cell has the smaller entry. Because every entry in the 
subtableau $t$ is smaller than every other entry of the staircase pair tableau, these two paths will overlap 
until $\L_s$ leaves the subtableau $t$, at which point $\L_1$ terminates. Hence the restriction of 
$\mathcal{L}_s$ to $t$ is exactly $\mathcal{L}_1$.

Similarly, the paths $\mathcal{R}_s$ and $\R_2$ are both obtained by starting at the cell containing $2|\lambda|$ and repeatedly moving up or to the left, to whichever cell has the larger entry. These two paths will overlap until $\R_s$ leaves the subtableau $(t.\epsilon)^\dagger$, at which point $\R_2$ terminates. Therefore the restriction of $\mathcal{R}_s$ to $(t.\epsilon)^\dagger$ is exactly 
$\mathcal{R}_2$.
\end{proof}

We now state a lemma of~\cite{pon.wang.2011}, and deduce a very similar dual statement in Lemma~\ref{lemma.pw2}. 
Note that with respect to~\cite{pon.wang.2011}, we have interchanged the definitions of promotion and inverse promotion, 
and those of evacuation and dual evacuation, following~\cite{stanley.2009} rather than~\cite{edelmann.greene.1987}.

\begin{lemma}{\cite[Lemma~3.4]{pon.wang.2011}}
\label{lemma.pw1}
If the letter $|\lambda|$ is in cell $(i,j)$ of $t$, then the promotion path $\L$ of $t.\epsilon$ ends on cell $(i,j)$ of 
$t.\epsilon$. 
\end{lemma}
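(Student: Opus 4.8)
The plan is to prove that if $|\lambda|$ is at cell $(i,j)$ of $t$, then the promotion path $\mathcal{L}$ of $t.\epsilon$ ends at $(i,j)$. The natural approach is to reduce this to the known interaction between promotion, evacuation, and inverse promotion encoded in Lemma~\ref{lemma.identities}, together with an understanding of where the sliding paths begin and end in terms of the positions of the extremal letters $1$ and $|\lambda|$. The key observation is that the endpoint of a promotion path $\mathcal{L}$ in a tableau $u$ is precisely the cell that becomes empty at the end of the jeu-de-taquin slide, which is the cell where the entry $|\lambda|$ gets inserted---equivalently, it is the cell occupied by the entry $1$ in $u.\partial^*$ (since inverse promotion places $|\lambda|$ and slides it down to where $1$ originated). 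So I would first reformulate the claim as a statement about the location of a distinguished letter under a composite of promotion-type operators.

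First I would make precise the dictionary between sliding paths and cell positions: the promotion path $\mathcal{L}$ of $u$ starts at the cell of the letter $1$ and terminates at the cell into which $|\lambda|$ is inserted, and symmetrically the inverse promotion path $\mathcal{R}$ of $u$ starts at the cell of $|\lambda|$ and terminates at the cell vacated by $1$. Next I would use the evacuation identity $\epsilon \partial = \partial^* \epsilon$ from Lemma~\ref{lemma.identities}, which says that promotion conjugated by evacuation is inverse promotion. Applying this, the endpoint of the promotion path on $t.\epsilon$ should correspond, after undoing $\epsilon$, to the starting point of an inverse-promotion-type path on $t$, which is governed by the position of the maximal letter $|\lambda|$ in $t$. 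The cell $(i,j)$ holding $|\lambda|$ in $t$ is exactly the data the lemma references, so the identity should convert the assertion into a tautology about where the extremal letters sit.

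The main obstacle will be handling the geometry of evacuation correctly: evacuation is not simply a transpose or a rotation, and one must carefully track how $\epsilon$ permutes the cells, not merely the entries. In particular, I would need to verify that the cell-level action of $\epsilon$ fixes or suitably relocates the cell of the largest letter in the way required, and that the conjugation identity $\epsilon\partial = \partial^*\epsilon$ translates into the correct statement about \emph{endpoints of paths} rather than merely about the resulting tableaux. A clean way to avoid an intricate cell-tracking argument is to appeal directly to the cited result, since this is stated as Lemma~\ref{lemma.pw1} quoting \cite[Lemma~3.4]{pon.wang.2011}; the proof in the present paper can reasonably be deferred to that reference. If a self-contained argument is desired, I would instead induct on $|\lambda|$, peeling off the letter $1$ via a single promotion slide and matching the recursive structure of $\epsilon$, using that $\epsilon$ restricted to the subtableau on $\{2,\ldots,|\lambda|\}$ agrees with evacuation of that smaller tableau; the inductive step would then reduce to verifying the base relationship between the slide path endpoint and the position of the maximum, which is the heart of the matter.
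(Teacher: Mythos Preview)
You have the right starting point: the identity $\epsilon\partial = \partial^*\epsilon$ from Lemma~\ref{lemma.identities}, together with the observation that the endpoint of the promotion path $\mathcal{L}$ of $t.\epsilon$ is precisely the cell containing $|\lambda|$ in $(t.\epsilon).\partial$. However, you then get stuck on exactly the point the paper's argument dissolves in one line. Rather than trying to track how $\epsilon$ moves cells, or setting up an induction, simply expand $\partial^*\epsilon$ in the $\tau_i$'s: since $\partial^* = \tau_{|\lambda|-1}\tau_{|\lambda|-2}\cdots\tau_1$ cancels the leading factor of $\epsilon$, one gets
\[
\partial^*\epsilon = (\tau_1\tau_2\cdots\tau_{|\lambda|-2})(\tau_1\tau_2\cdots\tau_{|\lambda|-3})\cdots(\tau_1\tau_2)(\tau_1),
\]
in which $\tau_{|\lambda|-1}$ never appears. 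Hence the letter $|\lambda|$ is fixed by $\partial^*\epsilon$, so its cell in $t.(\partial^*\epsilon)=t.(\epsilon\partial)$ is the same as in $t$, namely $(i,j)$. That is the entire argument; no cell-tracking through $\epsilon$ and no induction is needed.

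Two smaller points. First, your parenthetical ``equivalently, it is the cell occupied by the entry $1$ in $u.\partial^*$'' is not correct: the cell of $1$ in $u.\partial^*$ is the endpoint of the \emph{inverse} promotion path $\mathcal{R}$ of $u$, not of $\mathcal{L}$. Second, the worry that ``evacuation is not simply a transpose or a rotation'' and that one must track its action on cells is a red herring here; the paper's proof never needs to know what $\epsilon$ does to cells, only that $\partial^*\epsilon$ (as a word in the $\tau_i$) omits $\tau_{|\lambda|-1}$.
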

\begin{proof}
From Lemma~\ref{lemma.identities}, we know that $t.(\epsilon\partial) = t.(\partial^*\epsilon)$. Working first with 
the right hand side, we see that 
\[
\partial^*\epsilon = (\tau_1\tau_2\cdots\tau_{|\lambda|-2})(\tau_1\tau_2\cdots\tau_{|\lambda|-3})\cdots(\tau_1\tau_2)(\tau_1)\;.
\]
Note that the operator $\partial^*\epsilon$ does not move the letter $|\lambda|$, as $\tau_{|\lambda|-1}$ does not appear. 
Therefore the position of $|\lambda|$ is the same in $t.(\partial^*\epsilon)$ as in $t$. But 
$t.(\epsilon\partial) = t.(\partial^*\epsilon)$, so the position of $|\lambda|$ must be the same in $t.(\epsilon\partial)$ as in $t$.

The position of $|\lambda|$ in $t.(\epsilon\partial)$ is the lower right endpoint of the path $\L$ in $t.\epsilon$, by the 
sliding definition of promotion. This completes the proof.
\end{proof}

\begin{lemma}
\label{lemma.pw2}
If the letter $1$ is in cell $(i,j)$ of $(t.\epsilon)^\dagger$, then the inverse promotion path $\R$ of 
$(t.\epsilon)^\dagger.\epsilon^*$ ends on cell $(i,j)$ of $(t.\epsilon)^\dagger.\epsilon^*$. 
\end{lemma}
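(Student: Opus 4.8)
The plan is to prove Lemma~\ref{lemma.pw2} by conjugating Lemma~\ref{lemma.pw1} with the operator $\dagger$, exactly in the spirit of how the dual identities were obtained in the proof of Lemma~\ref{lemma.identities}. The key observation is that Lemma~\ref{lemma.pw1} is a statement relating the position of $|\lambda|$ in a tableau to where the promotion path of its evacuation ends, and that $\dagger$ converts promotion into inverse promotion, evacuation into dual evacuation, and the largest letter into the smallest letter. So the expectation is that applying $\dagger$ to the whole setup of Lemma~\ref{lemma.pw1} yields precisely Lemma~\ref{lemma.pw2}.

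First I would set up the algebraic skeleton in parallel with the proof of Lemma~\ref{lemma.pw1}. Write $s := (t.\epsilon)^\dagger$ for brevity. Using Lemma~\ref{lemma.identities}, I have $\epsilon^* \partial = \partial^* \epsilon^*$, so $s.(\epsilon^*\partial) = s.(\partial^*\epsilon^*)$. Expanding $\partial^*\epsilon^*$ in terms of the $\tau_i$, I would check that the factor $\tau_1$ does not appear, so that $\partial^*\epsilon^*$ does not move the letter $1$. (This is the exact mirror of the observation in Lemma~\ref{lemma.pw1} that $\partial^*\epsilon$ omits $\tau_{|\lambda|-1}$ and hence fixes $|\lambda|$; here the roles of $1$ and $|\lambda|$ are swapped and the roles of $\tau_1$ and $\tau_{|\lambda|-1}$ are swapped.) Therefore the position of $1$ is the same in $s.(\partial^*\epsilon^*)$ as in $s$, and hence the same in $s.(\epsilon^*\partial)$ as in $s$.

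Next I would translate this back into a statement about a sliding path. In Lemma~\ref{lemma.pw1}, the position of $|\lambda|$ in $t.(\epsilon\partial)$ is the endpoint of the promotion path $\L$ of $t.\epsilon$, by the sliding definition of promotion. The dual fact I need is that the position of $1$ in $s.(\epsilon^*\partial)$ records the endpoint of the inverse-promotion path $\R$ of $s.\epsilon^*$. Concretely, applying $\partial$ (full promotion) to $s.\epsilon^*$ slides the empty cell along $\L$ and inserts $|\lambda|+1$ at its end, subtracting one from every entry; tracking instead where the smallest letter $1$ settles identifies it with the endpoint of the inverse-promotion path $\R$ of $s.\epsilon^*$. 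Combining this with the conclusion of the previous paragraph gives that $\R$ of $s.\epsilon^* = (t.\epsilon)^\dagger.\epsilon^*$ ends on the cell $(i,j)$ that originally held the $1$ in $s = (t.\epsilon)^\dagger$, which is exactly the claim.

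The main obstacle I anticipate is not the algebra — which mirrors Lemma~\ref{lemma.pw1} step for step — but getting the bookkeeping right in the final translation, namely confirming that tracking the smallest letter under a full promotion $\partial$ genuinely traces out the undirected inverse-promotion path $\R$ (as opposed to $\L$). This requires being careful with the paper's convention that $\L$ and $\R$ are defined as undirected paths and that the naming of promotion versus inverse promotion here follows~\cite{stanley.2009} rather than~\cite{edelmann.greene.1987}. An alternative and perhaps cleaner route, which I would use as a cross-check, is to apply $\dagger$ directly: since $\dagger$ is a shape-preserving-up-to-transpose involution that sends $\partial\mapsto\partial^*$, $\epsilon\mapsto\epsilon^*$, and reverses labels, feeding $t.\epsilon$ into Lemma~\ref{lemma.pw1} and then conjugating the entire conclusion by $\dagger$ should produce Lemma~\ref{lemma.pw2} formally. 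Either way, once the correspondence between ``position of the extreme letter'' and ``endpoint of the sliding path'' is pinned down in both directions, the lemma follows immediately.
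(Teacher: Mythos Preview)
Your overall plan—dualize Lemma~\ref{lemma.pw1} by swapping the roles of $1$ and $|\lambda|$, of $\tau_1$ and $\tau_{|\lambda|-1}$, and of $\partial$ and $\partial^*$—is exactly right, and is what the paper does. But you picked the wrong identity to dualize with, and this breaks the argument.

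You use $\epsilon^*\partial = \partial^*\epsilon^*$ and then try to expand $\partial^*\epsilon^*$ to show $\tau_1$ is absent. It is not. Write $N=|\lambda|$; then $\partial^* = \tau_{N-1}\cdots\tau_1$ and the first factor of $\epsilon^*$ is again $\tau_{N-1}\cdots\tau_1$, so there is no telescoping cancellation of $\tau_1$. Concretely, on the four–element poset with relations $a<c$, $a<d$, $b<d$ one checks that $\partial^*\epsilon^*$ sends the linear extension $(a,b,d,c)$ to $(b,a,c,d)$, moving the label $1$ from $a$ to $b$. So $\partial^*\epsilon^*$ does \emph{not} fix the position of $1$ in general, and your ``I would check that $\tau_1$ does not appear'' step fails. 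The true $\dagger$-dual of the identity $\epsilon\partial=\partial^*\epsilon$ used in Lemma~\ref{lemma.pw1} is $\epsilon^*\partial^*=\partial\epsilon^*$ (conjugate both sides by $\epsilon^*$, or by $\dagger$); expanding $\partial\epsilon^*$ one sees that $\partial=\tau_1\cdots\tau_{N-1}$ cancels the first factor of $\epsilon^*$ entirely, leaving $(\tau_{N-1}\cdots\tau_2)(\tau_{N-1}\cdots\tau_3)\cdots(\tau_{N-1})$, which genuinely omits $\tau_1$.

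Your final translation step inherits the same slip. You try to read the position of $1$ in $(s.\epsilon^*).\partial$ as the endpoint of $\R$ in $s.\epsilon^*$, but under promotion $\partial$ the old $1$ is deleted and the new $1$ is simply where the old $2$ sat—this is the second cell of $\L$, not anything to do with $\R$. The correct reading uses $\partial^*$: by the sliding definition of inverse promotion, the position of $1$ in $(s.\epsilon^*).\partial^*$ is the upper-left endpoint of $\R$ in $s.\epsilon^*$. With the corrected identity $\epsilon^*\partial^*=\partial\epsilon^*$ this gives the lemma immediately, and this is precisely the paper's proof. Your ``alternative route'' of conjugating Lemma~\ref{lemma.pw1} by $\dagger$ is a valid cross-check and in fact produces this corrected argument automatically.
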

\begin{proof}
Note that the conjugation map $\dagger$ reverses the labels and interchanges the notions of (below or to the left) and (above or to the right), so it takes the path $\L$ of $t.\epsilon$ to the path $\R$ of $(t.\epsilon)^\dagger$. 

Applying Lemma~\ref{lemma.pw1} to the tableau $(t.\epsilon)$ gives that if the letter $|\lambda|$ is in cell $(i,j)$ of $(t.\epsilon)$, then the promotion path $\L$ of $t$ ends on cell $(i,j)$ of $t$, because $\epsilon^2 = 1$.

Application of the map $\dagger$ to this statement completes the proof, using from Lemma~\ref{lemma.identities} that $(t.\epsilon)^\dagger.\epsilon^* = t^\dagger$. 
\end{proof}

\begin{lemma}
\label{lemma.mid overlap}
The path $\R_s$ passes through the cell containing $|\lambda|$, the maximal entry in $t$. Likewise, the path 
$\L_s$ passes through the cell containing $|\lambda| + 1$, the minimal entry in $(t.\epsilon)^\dagger$.
\end{lemma}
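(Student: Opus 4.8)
The plan is to prove the statement for $\R_s$ by localizing where its initial arc inside $(t.\epsilon)^\dagger$ ends, and then to obtain the statement for $\L_s$ by the mirror argument.

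Write $c^*$ for the cell of $|\lambda|$ in $t$, the maximum of $t$, and recall that every entry of $(t.\epsilon)^\dagger$ exceeds every entry of $t$. Since inverse promotion always slides the hole into the larger of its upper and left neighbors, and upper or left neighbors in an SYT carry smaller entries, the entries along $\R_s$ strictly decrease from $2|\lambda|$ toward $1$; hence $\R_s$ first traverses cells of $(t.\epsilon)^\dagger$ and, once it drops to an entry $\le |\lambda|$, only cells of $t$. By the preceding lemma this initial arc is exactly $\R_2$, which terminates at an inner corner $d$ of $(t.\epsilon)^\dagger$; from $d$ the hole steps into the larger of its upper or left neighbors, all of which lie in $t$. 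As $c^*$ is the maximal entry of $t$, the whole statement reduces to showing that $c^*$ is an upper or left neighbor of $d$, for then the hole necessarily moves into $c^*$.

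To locate $d$ I would run the argument of Lemma~\ref{lemma.pw2} with $t^\dagger$ in place of $(t.\epsilon)^\dagger$: since $t^\dagger.\epsilon^* = (t.\epsilon)^\dagger$ by Lemma~\ref{lemma.identities}, the operator $\partial\epsilon^* = \epsilon^*\partial^*$ fixes the cell of $1$, so $\R_2$ terminates exactly at the cell occupied by $1$ in $t^\dagger$. That cell is the $\dagger$-image of the cell of $|\lambda|$ in $t$, namely of $c^*$. It then remains to check the purely combinatorial fact that, in the staircase pair, this reflected cell is glued immediately to the right of (or below) $c^*$: because $t$ is half-right-justified, its outer corners all lie on a single anti-diagonal, and the diagonal reflection defining $\dagger$ together with the gluing of Figure~\ref{figure.half right} carries that anti-diagonal onto the neighboring anti-diagonal of cells just to the right of those corners. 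Hence $d$ is a right or lower neighbor of $c^*$, so $c^*$ is an upper or left neighbor of $d$ and, being the maximum of $t$, is the cell the hole enters; thus $\R_s$ passes through $c^*$.

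I expect this geometric step---verifying that the cell of $1$ in $t^\dagger$ is glued adjacent to $c^*$---to be the main obstacle, since it is precisely here that the half-right-justified hypothesis is used; for a generic skew shape the inverse promotion hole need not even reach the minimal cell, so the conclusion genuinely relies on the join of Figure~\ref{figure.half right}. The statement for $\L_s$ is then proved by the symmetric argument, with promotion in place of inverse promotion and Lemma~\ref{lemma.pw1} in place of Lemma~\ref{lemma.pw2}: the terminus of $\L_1$ sits at the cell of the maximal entry of $t.\epsilon$, glued immediately above or to the left of the cell of $|\lambda|+1$, into which the promotion hole then steps. The two arguments are exchanged by conjugation with $\dagger$, which by Lemma~\ref{lemma.identities} swaps $\partial\leftrightarrow\partial^*$ and reverses the entries $i\mapsto 2|\lambda|+1-i$, hence the roles of $|\lambda|$ and $|\lambda|+1$.
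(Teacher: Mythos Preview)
Your proof is correct and follows essentially the same route as the paper's. The only cosmetic difference is which of the two dual lemmas you invoke for each half: you apply Lemma~\ref{lemma.pw2} (with $t^\dagger$ in place of $(t.\epsilon)^\dagger$, using $t^\dagger.\epsilon^*=(t.\epsilon)^\dagger$) to locate the terminus of $\R_2$, whereas the paper applies Lemma~\ref{lemma.pw1} and then transports $\L$ of $t.\epsilon$ to $\R$ of $(t.\epsilon)^\dagger$ via the conjugation $\dagger$; since Lemmas~\ref{lemma.pw1} and~\ref{lemma.pw2} are themselves exchanged by $\dagger$, these are two phrasings of the same argument. The paper is slightly more precise on the geometric step you flagged: it states outright that in the staircase pair the cell $(i,j)^\dagger$ sits immediately to the \emph{right} of $(i,j)$ (your ``or below'' does not occur for half-right-justified shapes), so both of its upper and left neighbors, if they exist, lie in $t$.
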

\begin{proof}
Let the letter $|\lambda|$ be in cell $(i,j)$ of $t$. From Lemma~\ref{lemma.pw1}, we have that the promotion path 
$\L$ of $t.\epsilon$ ends on cell $(i,j)$ of $t.\epsilon$. As noted in the proof of Lemma~\ref{lemma.pw2}, the conjugation map $\dagger$ takes the path $\L$ of $t.\epsilon$ to the path $\R$ of $(t.\epsilon)^\dagger$. 

Hence, the path $\R_2$ of $(t.\epsilon)^\dagger$ passes through the image under $\dagger$ of the cell $(i,j)$ in 
$(t.\epsilon)^\dagger$. As $\R_s$ agrees with $\R_2$ up to this point, we have that $\R_s$ passes through the cell 
$(i,j)^\dagger$. By the construction of the staircase pair, this cell is immediately to the right of the cell $(i,j)$, 
which is in $t$. The path $\R_s$ moves from the cell $(i,j)^\dagger$ to the cell above or to the left, whichever 
has the larger entry. But both of these cells (if they exist), are in the subtableau $t$, and $|\lambda|$ is the 
largest entry in $t$. Therefore $\R_s$ passes through $(i,j)$, the cell containing $|\lambda|$.

The second part of the lemma is given by applying this result to the `transposed' staircase pair $(t.\epsilon,t^\dagger)$, noting that this operation swaps the paths $\L_s,\L_1$ and $\L_2$ with $\R_s,\R_2 $ and $\R_1$ respectively.
\end{proof}

\begin{lemma}
The restriction of $\R_s$ to $t$ is exactly $\R_1$, and the restriction of $\L_s$ to $(t.\epsilon)^\dagger$ is 
exactly $\L_2$.
\end{lemma}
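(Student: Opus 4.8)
The plan is to trace $\R_s$ and $\L_s$ from the precise cells at which Lemma~\ref{lemma.mid overlap} locates them inside the relevant subtableau, and then to show that each path can neither leave that subtableau nor deviate from the local rule that defines the standalone path there. The two assertions are mirror images of one another under the conjugation $\dagger$, which reverses labels and interchanges the notions of (down or left) and (up or right), and hence swaps $\L$ with $\R$; I would therefore prove the statement for $\R_s$ in full and obtain the statement for $\L_s$ by reading the same argument upside down.

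Before doing so I would record two structural facts about the staircase pair $(t,(t.\epsilon)^\dagger)$. First, its entries split as $1,\dots,|\lambda|$ in the subtableau $t$ and $|\lambda|+1,\dots,2|\lambda|$ in the subtableau $(t.\epsilon)^\dagger$, so $|\lambda|$ is the largest entry of $t$ and $|\lambda|+1$ the smallest entry of $(t.\epsilon)^\dagger$. Second, $t$ occupies the upper-left region: for any of its cells, the up- and left-neighbors that exist in the pair again lie in $t$ (only right- or down-steps can cross the boundary), and dually $(t.\epsilon)^\dagger$ is closed under down- and right-steps. Both facts follow immediately from the half-right-justified shape of $t$ together with the way $(t.\epsilon)^\dagger$ is adjoined below and to the right, as in Figure~\ref{figure.half right}.

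For $\R_s$, recall that this path is built from the maximal entry $2|\lambda|$ by repeatedly stepping to the up- or left-neighbor carrying the larger label, so the labels it visits strictly decrease. By Lemma~\ref{lemma.mid overlap}, $\R_s$ passes through the cell $c$ containing $|\lambda|$, the largest entry of $t$. Since every label visited after $c$ is smaller than $|\lambda|$, every such cell lies in $t$, and by the closure property $\R_s$ thereafter takes only up/left steps inside $t$. At each such cell the choice between the up- and left-neighbor is decided by the entries of $t$, which are unchanged in the pair, so from $c$ onward $\R_s$ obeys exactly the rule defining the inverse promotion path of the standalone subtableau $t$ started at the cell of its largest entry, namely $\R_1$. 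As $\R_1$ also starts at $c$, the restriction of $\R_s$ to $t$ equals $\R_1$.

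The statement for $\L_s$ is the mirror image: $\L_s$ increases along its steps, Lemma~\ref{lemma.mid overlap} places it at the cell $c'$ containing the smallest entry $|\lambda|+1$ of $(t.\epsilon)^\dagger$, every subsequent label exceeds $|\lambda|$ and hence lies in $(t.\epsilon)^\dagger$, and the down/right-closure confines the path there, so that it coincides with $\L_2$. The one delicate point, which I expect to be the main obstacle, is justifying that each path enters its target subtableau exactly at the distinguished cell ($c$ for $\R_s$, $c'$ for $\L_s$) and never returns to the other subtableau; this is precisely where strict monotonicity of the labels along each path combines with the entry points supplied by Lemma~\ref{lemma.mid overlap} and with the up-left / down-right closure of the two regions. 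Everything else is bookkeeping.
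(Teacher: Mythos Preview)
Your proposal is correct and follows essentially the same route as the paper's own proof: invoke Lemma~\ref{lemma.mid overlap} to place $\R_s$ (resp.\ $\L_s$) at the cell of $|\lambda|$ in $t$ (resp.\ $|\lambda|+1$ in $(t.\epsilon)^\dagger$), and then argue that from that cell onward the path is confined to the subtableau and obeys the same local rule as the standalone path. The paper phrases the confinement step geometrically (``$t$ has no cells below or to the right of the cell containing $|\lambda|$''), whereas you phrase it via strict label monotonicity together with the up/left-closure of $t$; these are equivalent observations, and your version is if anything slightly more explicit about why the ``delicate point'' causes no trouble.
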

\begin{proof}
The path $\R_1$ starts at $|\lambda|$ and moves up or to the left, to whichever cell has the larger entry. 
The path $\R_s$ moves in the same way, and passes through the cell containing $|\lambda|$ by 
Lemma~\ref{lemma.mid overlap}. The subtableau $t$ has no cells below or to the right of the cell 
containing $|\lambda|$, so the restriction of $\R_s$ to $t$ is exactly $\R_1$.

Similarly, the path $\L_2$ starts at $|\lambda|+1$ and moves down or to the right, to whichever cell has 
the smaller entry. The path $\L_s$ moves in the same way, and passes through the cell containing 
$|\lambda|+1$ by Lemma~\ref{lemma.mid overlap}. The subtableau $(t.\epsilon)^\dagger$ has no cells 
above or to the left of the cell containing $|\lambda|+1$, so the restriction of $\L_s$ to $(t.\epsilon)^\dagger$ 
is exactly $\L_2$.
\end{proof}

\begin{corollary}
Given $t \in \hrSYT(\lambda)$, the path $\mathcal{L}$ of the staircase pair $(t, (t.\epsilon)^\dagger)$ is 
the concatenation of the paths $\mathcal{L}$ in the subtableaux $t$ and $(t.\epsilon)^\dagger$. 
The same is true when each $\L$ is replaced by an $\R$. 
\end{corollary}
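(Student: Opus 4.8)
The plan is to read this corollary off directly from the three preceding lemmas, which together pin down the behavior of $\mathcal{L}_s$ and $\mathcal{R}_s$ on each half of the staircase pair. First I would recall that $\mathcal{L}_s$ is a single monotone lattice path through $(t,(t.\epsilon)^\dagger)$, starting at the cell containing $1$ (which lies in the subtableau $t$) and taking only down and right steps, always moving toward the smaller of the right and lower neighbor. The first of the preceding lemmas identifies the portion of $\mathcal{L}_s$ lying in $t$ with $\mathcal{L}_1$, and the last identifies the portion lying in $(t.\epsilon)^\dagger$ with $\mathcal{L}_2$. Meanwhile Lemma~\ref{lemma.mid overlap} guarantees that $\mathcal{L}_s$ passes through the cell containing $|\lambda|+1$, the minimal entry of $(t.\epsilon)^\dagger$, which is exactly the starting cell of the promotion path $\mathcal{L}_2$. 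Thus $\mathcal{L}_s$ agrees with $\mathcal{L}_1$ until it reaches the boundary, then continues as $\mathcal{L}_2$.

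Next I would argue that this yields a genuine concatenation rather than an overlap or a gap. Since every entry of $t$ is strictly smaller than every entry of $(t.\epsilon)^\dagger$, and $t$ occupies the upper-left region of the staircase pair while $(t.\epsilon)^\dagger$ occupies the lower-right region, a path using only down and right steps cannot return to $t$ once it has entered $(t.\epsilon)^\dagger$. Hence there is exactly one transition between the two subtableaux, and $\mathcal{L}_s$ is precisely $\mathcal{L}_1$ followed by $\mathcal{L}_2$. The statement for $\mathcal{R}$ follows by the mirror-image argument: $\mathcal{R}_s$ is the monotone up-left path starting at the maximal entry $2|\lambda|$ (in $(t.\epsilon)^\dagger$); its restriction to $(t.\epsilon)^\dagger$ is $\mathcal{R}_2$ and its restriction to $t$ is $\mathcal{R}_1$ by the same two lemmas, and Lemma~\ref{lemma.mid overlap} routes it through the cell containing $|\lambda|$, the maximal entry of $t$ and the starting cell of $\mathcal{R}_1$.

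I expect the only point requiring any care to be continuity at the boundary, namely that $\mathcal{L}_1$ terminates in a cell adjacent (across the dividing edge) to the cell containing $|\lambda|+1$, so that the two restricted paths join into one connected path with a single crossing rather than meeting with a jump. This is forced by the alignment in the staircase pair construction (the top cell of $(t.\epsilon)^\dagger$ sits immediately to the right of the rightmost cell of $t$) together with the monotone step directions established above; once this is observed, the corollary is immediate, and the $\mathcal{R}$ case is entirely symmetric.
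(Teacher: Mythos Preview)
Your proposal is correct and matches the paper's approach: the paper states this corollary without proof, treating it as immediate from the two restriction lemmas together with Lemma~\ref{lemma.mid overlap}. You have simply spelled out the details the paper leaves implicit, including the monotonicity argument that prevents the path from re-entering $t$ after crossing into $(t.\epsilon)^\dagger$.
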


\begin{corollary}
\label{corollary.crossings}
Given $t \in \hrSYT(\lambda)$, the paths $\mathcal{L}$ and $\mathcal{R}$ of $t$ cross if and only if in 
the staircase pair $(t, (t.\epsilon)^\dagger)$, the paths $\mathcal{L}$ and $\mathcal{R}$ of $(t, (t.\epsilon)^\dagger)$ 
cross in the subtableau $t$. 

Likewise, the paths $\mathcal{L}$ and $\mathcal{R}$ of the staircase pair $(t, (t.\epsilon)^\dagger)$ cross in the 
subtableau $(t.\epsilon)^\dagger$ if and only if the paths $\mathcal{L}$ and $\mathcal{R}$ of $(t.\epsilon)^\dagger$ cross.
\end{corollary}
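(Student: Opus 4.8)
The plan is to read this corollary off the immediately preceding corollary, which decomposes each staircase-pair path into the corresponding paths of the two subtableaux. That result records that inside the subtableau $t$ the staircase-pair paths $\mathcal{L}_s,\mathcal{R}_s$ restrict to exactly $\mathcal{L}_1,\mathcal{R}_1$ (the promotion and inverse promotion paths of the standalone tableau $t$), and inside $(t.\epsilon)^\dagger$ they restrict to $\mathcal{L}_2,\mathcal{R}_2$ (the paths of $(t.\epsilon)^\dagger$). The whole corollary then rests on the single observation that whether the two paths \emph{cross} is detected by a purely local rule, so that it can be read off from the restriction of the paths to each subtableau.

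First I would recall, from the proof of Lemma~\ref{lemma.phi bijection} and the injective-case discussion, that a crossing from ($\mathcal{R}$ above $\mathcal{L}$) to ($\mathcal{L}$ above $\mathcal{R}$) occurs precisely at a left inner corner carrying the configuration~\eqref{equation.allowed}, and that the reverse switch is impossible because~\eqref{equation.forbidden1} and~\eqref{equation.forbidden2} are forbidden. Thus a crossing is nothing more than the presence of the pattern~\eqref{equation.allowed} at some left inner corner, and this is entirely a local condition on three or four adjacent cells. Since inside a given subtableau the staircase-pair paths occupy exactly the same cells as that subtableau's own paths, the pattern~\eqref{equation.allowed} occurs at a cell of the subtableau in the staircase pair if and only if it occurs at the corresponding cell of the standalone subtableau.

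Granting this, the first equivalence is immediate: inside $t$ the paths $\mathcal{L}_s,\mathcal{R}_s$ agree cell for cell with $\mathcal{L}_1,\mathcal{R}_1$, they enter $t$ at its top left cell (containing $1$) with $\mathcal{R}$ above $\mathcal{L}$ in both settings (a horizontal overlap, as $\lambda_1>\lambda_2$), and a left inner corner of $t$ remains a left inner corner in the staircase pair because the cells adjoined by $(t.\epsilon)^\dagger$ lie to the lower right and never below a left-boundary cell of $t$; hence a crossing inside $t$ of the staircase pair happens exactly when $\mathcal{L}(t),\mathcal{R}(t)$ cross. For the second equivalence I would argue symmetrically on $(t.\epsilon)^\dagger$, here invoking Lemma~\ref{lemma.mid overlap}: it guarantees that $\mathcal{L}_s$ enters $(t.\epsilon)^\dagger$ at the cell of its minimal entry $|\lambda|+1$ and that $\mathcal{R}_s$ reaches the cell of the maximal entry $|\lambda|$ of $t$ across the shared boundary, so that inside $(t.\epsilon)^\dagger$ the two restricted paths run from its top left corner to its bottom right corner exactly as the standalone paths $\mathcal{L}_2,\mathcal{R}_2$ do. Locality of the pattern~\eqref{equation.allowed} then again makes the crossings coincide.

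The one delicate point, and the step I expect to demand the most care, is precisely the matching of the ``above/below'' \emph{convention} across the shared boundary: the endpoints of a restricted sub-path are not the global extreme cells of the staircase pair, so one must check that each restricted path genuinely begins and ends at the correct corner cells of its subtableau and that no braid-hook configuration is created or destroyed where the two subtableaux meet. Lemma~\ref{lemma.mid overlap} is exactly the tool that controls this, by pinning down how $\mathcal{L}_s$ and $\mathcal{R}_s$ straddle the boundary through the cells carrying $|\lambda|$ and $|\lambda|+1$. Once this is settled, both equivalences follow formally. Finally, I would observe that combining them with the fact (from Section~\ref{section.right-justified}) that the paths of the right-justified staircase pair cross exactly once shows that the unique crossing lies in at most one of the two subtableaux, which is the complementarity needed to feed into Proposition~\ref{proposition.cross pairing}.
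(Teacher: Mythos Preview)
Your proposal is correct and follows exactly the approach implicit in the paper: the corollary is stated there with no proof, as an immediate consequence of the preceding (unlabeled) corollary that $\mathcal{L}_s$ and $\mathcal{R}_s$ are the concatenations of $\mathcal{L}_1,\mathcal{L}_2$ and $\mathcal{R}_1,\mathcal{R}_2$ respectively. Your fleshed-out argument---that crossings are detected by the local pattern~\eqref{equation.allowed} and hence are preserved under restriction once the paths agree cell for cell---is precisely the reasoning the paper leaves to the reader, and your care about the boundary via Lemma~\ref{lemma.mid overlap} is appropriate though arguably already absorbed into the concatenation statement.
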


\begin{lemma}
For any $t \in \hrSYT(\lambda)$, if the paths $\L$ and $\R$ cross in $t$ then in $t.\epsilon$, the paths $\L$ and $\R$ 
do not cross. 

If $\lambda_1 \geq \lambda_2 + 2$ and $\lambda_\ell = 1$, then the converse is also true. That is, exactly one of 
$t$ and $t.\epsilon$ has its paths $\L$ and $\R$ cross.
\end{lemma}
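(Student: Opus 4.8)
The plan is to deduce both halves of the statement from the single right-justified tableau that packages $t$ and $t.\epsilon$ together, namely the staircase pair $S := (t, (t.\epsilon)^\dagger)$. As already observed, $S$ is of right-justified shape, so the results of Section~\ref{section.right-justified} (Lemma~\ref{lemma.phi bijection}) apply and its paths $\mathcal{L}$ and $\mathcal{R}$ cross exactly once. By Corollary~\ref{corollary.crossings}, the paths of $t$ cross if and only if the crossing of $S$ lies inside the subtableau $t$, while the paths of $(t.\epsilon)^\dagger$ cross if and only if the crossing of $S$ lies inside the subtableau $(t.\epsilon)^\dagger$. Thus everything reduces to locating the unique crossing of $S$ among its two halves.

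Before doing so, I would record the auxiliary fact that conjugation preserves crossings, converting the statement about $(t.\epsilon)^\dagger$ into one about $t.\epsilon$. As in the proof of Lemma~\ref{lemma.mid overlap}, $\dagger$ reverses labels and interchanges ``below or to the left'' with ``above or to the right,'' and hence carries the unordered pair of sliding paths $\{\mathcal{L},\mathcal{R}\}$ of any tableau $u$ to the pair of sliding paths of $u^\dagger$, merely exchanging their names. Since $\dagger$ acts as a reflection of the diagram, it preserves the number of transversal crossings of the two paths; as that number is $0$ or $1$ for $u$, it follows that $u$ crosses if and only if $u^\dagger$ crosses. Taking $u = t.\epsilon$ shows that the subtableau $(t.\epsilon)^\dagger$ of $S$ crosses exactly when $t.\epsilon$ does.

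The forward implication now holds for every $\lambda$ and is essentially immediate: if the paths of both $t$ and $t.\epsilon$ crossed, then $S$ would cross in both of its subtableaux, giving at least two crossings and contradicting that $S$ crosses exactly once. (That ``crosses at most once'' is legitimate follows, as in Section~\ref{subsection.injective}, from the fact that the reverse crossing is forbidden by configuration~\eqref{equation.forbidden1}.) Hence a crossing in $t$ forces no crossing in $t.\epsilon$.

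For the converse under $\lambda_1 \ge \lambda_2 + 2$ and $\lambda_\ell = 1$, I must promote ``at most one of $t, t.\epsilon$ crosses'' to ``exactly one,'' which amounts to showing that the unique crossing of $S$ cannot straddle the interface between $t$ and $(t.\epsilon)^\dagger$, so that it is forced entirely into one half. The crossing is a braid hook, an inner-corner configuration~\eqref{equation.allowed} on three consecutive values $k-1,k,k+1$. Since the entries of $t$ are exactly $1,\dots,|\lambda|$ and those of $(t.\epsilon)^\dagger$ are $|\lambda|+1,\dots,2|\lambda|$, a straddling hook would have to use consecutive values lying on both sides of the cut, so its values must be $\{|\lambda|-1,|\lambda|,|\lambda|+1\}$ or $\{|\lambda|,|\lambda|+1,|\lambda|+2\}$; in particular it must involve the cell of $|\lambda|$ (the maximum of $t$) together with the cell of $|\lambda|+1$ (the minimum of $(t.\epsilon)^\dagger$), whose relative position near the cut is pinned down by Lemma~\ref{lemma.mid overlap} and the construction of the staircase pair. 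The heart of the argument, and the step I expect to be the main obstacle, is the resulting local shape analysis at the two ends of the cut: because the cells of $(t.\epsilon)^\dagger$ fill in directly below the cut, the cell playing the role of $k-1$ acquires a box beneath it and configuration~\eqref{equation.allowed} is destroyed, and I would verify that $\lambda_1 \ge \lambda_2 + 2$ is exactly the condition guaranteeing this filling at the top end of the interface, while $\lambda_\ell = 1$ guarantees it at the bottom end. Once straddling is excluded, the unique crossing of $S$ sits entirely in $t$ or entirely in $(t.\epsilon)^\dagger$, so exactly one of $t$ and $t.\epsilon$ crosses; this proves the lemma and, with it, Proposition~\ref{proposition.cross pairing} and Theorem~\ref{thm:braid_moves_half_right_young_tableaux}.
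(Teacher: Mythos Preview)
Your approach matches the paper's: both pass to the staircase pair $S=(t,(t.\epsilon)^\dagger)$, use Corollary~\ref{corollary.crossings} together with the $\dagger$-equivariance of the path pair $\{\L,\R\}$ to translate between $t.\epsilon$ and $(t.\epsilon)^\dagger$, obtain the forward implication from ``at most one crossing in $S$,'' and for the converse argue that the unique crossing of $S$ cannot straddle the seam, so it lands entirely in one half.

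There is one genuine slip in how you allocate the two hypotheses in the converse. You invoke Lemma~\ref{lemma.phi bijection} at the outset to say the paths of $S$ cross \emph{exactly} once for all $\lambda$, but that lemma needs the right-justified shape to have first row strictly longer than its second and last row a single cell. For the staircase pair this holds precisely when $\lambda_1\ge\lambda_2+2$: a short computation with the shape of $(t.\epsilon)^\dagger$ shows the first two rows of $S$ have lengths $\lambda_1+1$ and $\lambda_2+2$, and the last row of $S$ (the bottom of $(t.\epsilon)^\dagger$, i.e.\ the leftmost column of the half-right-justified shape) is a single cell iff $\lambda_1\ge\lambda_2+2$. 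So this hypothesis is what guarantees \emph{existence} of the crossing in $S$, not what rules out straddling ``at the top end of the interface.'' The no-straddling step is carried by $\lambda_\ell=1$ alone, and more simply than you suggest: a braid hook lives on the lower-left boundary, so one that straddles would use only cells in the bottom row of $t$ and the leftmost column of $(t.\epsilon)^\dagger$; when $\lambda_\ell=1$ these total just two cells, too few for a three-cell hook. With this reallocation of roles your plan goes through and coincides with the paper's proof.
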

\begin{proof}
If the paths $\L$ and $\R$ did cross in $t.\epsilon$, then in $(t.\epsilon)^\dagger$ the paths $\R$ and $\L$ would 
cross, as the map $\dagger$ takes the paths $\L$ and $\R$ in $t.\epsilon$ to the paths $\R$ and $\L$ in 
$(t.\epsilon)^\dagger$.

But then by Corollary~\ref{corollary.crossings}, in the staircase pair $(t, (t.\epsilon)^\dagger)$ the paths $\L$ and 
$\R$ would cross at least twice, which contradicts the fact that in a right-justified tableau, $\L$ and $\R$ may cross 
at most once. This completes the proof of the first part of the lemma.

If $\lambda_\ell = 1$, then every braid hook in the staircase pair $(t, (t.\epsilon)^\dagger)$ is entirely contained 
within one of the subtableaux $t$ and $(t.\epsilon)^\dagger$. This is because a braid hook spanning both 
subtableaux would be formed only of cells in the bottom row of $t$ and in the left column of $(t.\epsilon)^\dagger$. 
If $\lambda_\ell = 1$, then there are only two such cells. 

If $\lambda_1 \geq \lambda_2 + 2$ then the staircase pair $(t, (t.\epsilon)^\dagger)$ is a right-justified tableau 
whose first row is longer that its second, and with a last row of a single cell. As in the proof of 
Lemma~\ref{lemma.phi bijection}, these are the conditions under which we know that the paths $\L$ and $\R$ 
of $(t, (t.\epsilon)^\dagger)$ cross exactly once. Because they must cross on a braid hook, this crossing must 
happen entirely within one of the subtableaux $t$ and $(t.\epsilon)^\dagger$. By Corollary~\ref{corollary.crossings}, 
in one of the tableaux $t$ and $(t.\epsilon)^\dagger$, the paths $\L$ and $\R$ cross.

Finally, the paths $\L$ and $\R$ of $(t.\epsilon)^\dagger$ cross if and only if the paths $\L$ and $\R$ of 
$t.\epsilon$ cross, completing the proof.
\end{proof}

We have shown that the paths $\mathcal{L}$ and $\mathcal{R}$ cannot cross in both $t$ and $t.\epsilon$, and 
that if $\lambda_1 \geq \lambda_2 + 2$ and $\lambda_\ell = 1$, then they cross in exactly one of those tableaux. 
This completes the proof of Proposition~\ref{proposition.cross pairing} and thus of 
Theorem~\ref{thm:braid_moves_half_right_young_tableaux}.

\subsection{Surjective Case}

The map $\varphi$ of Section~\ref{section.right-justified} is a bijection, because for the tableau shapes under 
consideration in that section, the paths $\L$ and $\R$ always cross exactly once. In 
Section~\ref{subsection.injective}, the map $\psi$ is an injection, because for the relevant shapes, the paths 
$\L$ and $\R$ may cross either 0 or 1 times. Understanding the image of $\psi$ allows us to determine the 
expected number of braid hooks for a tableau of, for example, trapezoidal shape.

In this section, we consider tableaux of \defn{skew right-justified shape}.
Let $\mu \subset \lambda=(\lambda_1,\ldots,\lambda_\ell)$ be two partitions.
Then we may consider standard tableaux of skew right-justified shape $\lambda/\mu$,
denoted by $\rSYT(\lambda/\mu)$.
If the skew shape $\lambda/\mu$ is connected (i.e., for each pair of consecutive rows, there are at least two cells 
(one in each row) which have a common edge), $\lambda_1>\lambda_2$ and $\lambda_\ell=1$, 
then the paths $\L$ and $\R$ in a tableau $t \in \rSYT(\lambda/\mu)$
must cross at least once and potentially cross more than once. In this case, the corresponding map $\psi$ is surjective. 

An example of a connected skew right-justified shape and a skew right-justified tableau
with paths $\L$ and $\R$ that cross more than once is given in Figure~\ref{figure.top.diagonal}. 
In general, the path $\R$ is above $\L$ in the top left corner if $\lambda_1>\lambda_2$. In the bottom right, 
$\L$ is above $\R$ if $\lambda_\ell=1$, so the paths cross at least once. Unlike the shapes we have previously 
considered, it is possible for the second hook of~\eqref{equation.hooks} to appear, in the top right corner. If this 
happens, then the paths cross in the other direction --- from ($\L$ above $\R$) to ($\R$ above $\L$). 

\begin{figure}
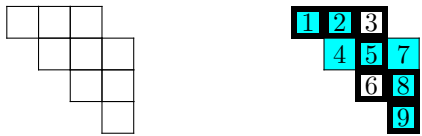

\[
	\tableau[sY]{&&&\bl\\ \bl&&&\\ \bl&\bl&& \\ \bl&\bl&\bl&\\} \qquad \qquad \qquad
	\tableau[sY]{\tf \lb \overlay 1& \tf \lb \overlay 2& \tf 3&\bl\\ \bl&\lb \overlay 4&\tf \lb \overlay 5
	& \lb \overlay 7\\ \bl&\bl&\tf 6 & \tf \lb \overlay 8 \\ \bl&\bl&\bl& \tf \lb \overlay 9\\}
\]
\caption{Left: a connected skew right-justified shape $\lambda/\mu=(4,3,2,1)/(1)$.
Right: a tableau in $\rSYT(\lambda/\mu)$ in which the paths $\L$ and $\R$ cross more than once due to the 
jagged top right boundary.}
\label{figure.top.diagonal}
\end{figure}

While it is possible for there to be more than one crossing, the difference between the number of crossings of each 
type must be exactly one. That is, there is exactly one more crossing on the lower left boundary than on the upper 
right boundary. The precise statement is given in the following proposition.

\begin{proposition}
Let $\mu\subset \lambda=(\lambda_1,\ldots,\lambda_\ell)$ be two partitions such that $\lambda/\mu$ is connected,
$\lambda_1>\lambda_2$, and $\lambda_\ell=1$. Then the promotion and inverse promotion paths
$\L$ and $\R$ in a tableau $t\in \rSYT(\lambda/\mu)$ cross at least once, and the difference between
the number of crossings ($\R$ above $\L$) to ($\L$ above $\R$) minus the number of the opposite crossings, is one.
\end{proposition}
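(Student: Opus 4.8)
The plan is to run the same ``two monotone paths crossing'' analysis as in the proof of Lemma~\ref{lemma.phi bijection}, but now to allow crossings in both directions and to extract a \emph{signed} count. The argument is essentially topological: $\mathcal{L}$ and $\mathcal{R}$ are two down-right lattice paths sharing the same pair of endpoints, and the signed number of times they swap relative position is forced by their behavior at the two ends. The forbidden configurations~\eqref{equation.forbidden1} and~\eqref{equation.forbidden2} again localize all swaps to inner corners, and a parity argument then pins the signed count to one.

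First I would pin down the endpoints and the initial and final relative order. Writing the right-justified skew shape so that row $i$ occupies the columns $[\lambda_1-\lambda_i+1,\lambda_1-\mu_i]$, the connectedness hypothesis says precisely that consecutive rows share a column, i.e. $\lambda_{i+1}>\mu_i$ for all $i$. From this one checks that $(1,1)$ is the unique cell with no neighbor above or to the left, and $(\ell,\lambda_1)$ is the unique cell with no neighbor below or to the right; hence the entry $1$ sits in $(1,1)$, the entry $|\lambda/\mu|$ sits in $(\ell,\lambda_1)$, and both $\mathcal{L}$ and $\mathcal{R}$ run from $(1,1)$ to $(\ell,\lambda_1)$. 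Because $\lambda_1>\lambda_2$ the cell $(2,1)$ is absent, so both paths leave $(1,1)$ by the horizontal step to $(1,2)$; by our convention this means we start with $\mathcal{R}$ above $\mathcal{L}$. Because $\lambda_\ell=1$, connectedness forces $\mu_{\ell-1}=0$, so $(\ell-1,\lambda_1)$ is present and both paths enter $(\ell,\lambda_1)$ by the vertical step from above; by our convention we finish with $\mathcal{L}$ above $\mathcal{R}$.

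Next I would argue that the relative order of the two paths is well defined throughout and changes only at inner corners. Exactly as in Lemma~\ref{lemma.phi bijection}, configurations~\eqref{equation.forbidden1} and~\eqref{equation.forbidden2} are forbidden, so no swap can occur in the interior of a full $2\times 2$ block. The only places where the paths can exchange their relative position are therefore inner corners: configuration~\eqref{equation.allowed}, a left inner corner (a braid hook), produces a crossing from ($\mathcal{R}$ above) to ($\mathcal{L}$ above), while the mirror configuration at a top-right inner corner --- the second hook of~\eqref{equation.hooks}, which is now permitted by the jagged boundary --- produces a crossing in the opposite direction. Finally I would conclude by parity: tracing from $(1,1)$ to $(\ell,\lambda_1)$, the relative order starts at ($\mathcal{R}$ above $\mathcal{L}$) and ends at ($\mathcal{L}$ above $\mathcal{R}$), and each crossing toggles between these two states. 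Writing $f$ for the number of ($\mathcal{R}$ above)$\to$($\mathcal{L}$ above) crossings and $b$ for the number of opposite crossings, the states alternate and a net single toggle is achieved exactly when $f-b=1$; in particular $f\ge 1$, so the paths cross at least once, which is the assertion.

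I expect the main obstacle to be making the notion of ``relative order'' of two monotone paths that may share steps fully rigorous --- that it is globally well defined and flips exactly once per crossing --- since this is where the horizontal/vertical overlap conventions do the real work and where one must be careful not to miscount a swap that happens along an overlapping segment. The corner bookkeeping that identifies the common endpoints and the forced first and last steps is the secondary technical point, and it is exactly where the connectedness hypothesis is used.
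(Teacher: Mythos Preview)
Your proposal is correct and follows essentially the same approach as the paper. The paper does not give a formal proof of this proposition; it is stated as a consequence of the discussion immediately preceding it, which sketches exactly the argument you have written out: the paths share endpoints and forced first/last steps under the hypotheses $\lambda_1>\lambda_2$ and $\lambda_\ell=1$, the forbidden configurations~\eqref{equation.forbidden1} and~\eqref{equation.forbidden2} localize all swaps to inner corners, the left inner corners give crossings in one direction while the new top-right inner corners (the second hook of~\eqref{equation.hooks}, now allowed by the skew boundary) give crossings in the other, and a parity count of toggles yields $f-b=1$. Your added bookkeeping---writing out the column ranges, using connectedness to verify uniqueness of the extreme cells and to deduce $\mu_{\ell-1}=0$---is the right way to make rigorous what the paper leaves implicit.
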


Translating this back via X.~Viennot's heap map $\nu^{-1}$ to commutation classes in $\mathfrak{S}_n$, states that in
commutation classes corresponding to connected skew shapes the expected difference between the number of 
braid moves of the form $s_is_{i+1}s_i$ and the number of braid moves of the form $s_{i+1}s_is_{i+1}$ is one.
Note that, since the shapes are skew, the words in $\mathfrak{S}_n$ are not necessarily reduced.

\begin{example}
The statement corresponding to the shape of Figure \ref{figure.top.diagonal} is that the expected difference between `up' 
and `down' braid moves in the commutation class of the word $\mathbf{w} := (s_1s_2s_3)(s_1s_2s_3)(s_1s_2)(s_1)$ 
is one. Note that $\mathbf{w}$ is not reduced. We may verify this by listing the four words in this commutation class, 
{\color{red}121}321321, {\color{red}121}{\color{blue}323}{\color{red}121}, 123{\color{red}121}321 and 
123123{\color{red}121}. Observe that there are five `up' braid moves, colored red, and one `down' braid move, 
colored blue.
\end{example}

\section{Homomesy}
\label{sec:homomesy}

In Section~\ref{sec:even_odd_homomesy}, we prove a refinement of Theorem~\ref{thm:braid_moves_young_tableaux} 
by showing that the number of braid hooks is homomesic with respect to the action of the dihedral group
$\langle \tau_o, \tau_e \rangle$, where  $\tau_o=\prod_{i \text{ odd}}\tau_i$ and $\tau_e=\prod_{i \text{ even}}\tau_i$ 
are the odd and even promotion operators, respectively. We reformulate the result in terms of reduced words
in Theorem~\ref{theorem.homomesy words} and provide a bijective proof in this setting. 
In Section~\ref{subsection.homomesy poset}
we prove an analogous result for more general posets, where the statistic of descents is proven to be homomesic
with respect to an even-odd action.

\subsection{Homomesy with respect to even-odd--promotion}
\label{sec:even_odd_homomesy}

Consider the group $G$ generated by the \defn{odd} and \defn{even promotion operators} 
$\tau_o=\prod_{i \text{ odd}}\tau_i$ and $\tau_e=\prod_{i \text{ even}}\tau_i$, respectively. 
Note that, within each operator, the $\tau_i$'s commute. Hence their relative order is
not relevant, which implies that $\tau_o$ and $\tau_e$ are
involutions. In particular, $G$ is a dihedral group.
In this section, we prove the following generalization of 
Theorem~\ref{thm:braid_hooks_homomesy_even_odd}, which corresponds to the special
case $\lambda=\Delta_n$.

\begin{theorem}
\label{theorem.homomesy}
   The number of braid hooks is homomesic with respect to the action of the dihedral
   group $\langle \tau_o,\tau_e\rangle$ on $\rSYT(\lambda)$ if and only if $\lambda_1>\lambda_2$ 
   and $\lambda_\ell=1$ for a partition $\lambda$ with $\ell$ parts.
\end{theorem}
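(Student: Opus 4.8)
The plan is to prove both implications, treating the substantive ``if'' direction by passing to the reduced-word model of Section~\ref{section.heaps} and proving the equivalent statement Theorem~\ref{theorem.homomesy words} orbit by orbit, and the ``only if'' direction by exhibiting orbits with distinct averages whenever a boundary condition fails. For the forward direction, assume $\lambda_1 > \lambda_2$ and $\lambda_\ell = 1$. By Theorem~\ref{thm:braid_moves_young_tableaux} the global average number of braid hooks on $\rSYT(\lambda)$ is one, so it suffices to prove that the average on each $\langle \tau_o,\tau_e\rangle$-orbit is also one; equivalently, that on every orbit $O$ one has $\sum_{t\in O}(\#\text{braid hooks of } t) = |O|$. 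As already noted, $\tau_o$ and $\tau_e$ are involutions and $G=\langle\tau_o,\tau_e\rangle$ is dihedral, so every orbit is a single cycle $\cdots \xrightarrow{\tau_o} t \xrightarrow{\tau_e} t.\tau_e \xrightarrow{\tau_o} \cdots$ traced by alternately applying the two generators. The goal is then a bijection, internal to each orbit, between the braid hooks occurring among the tableaux of $O$ and the tableaux of $O$ themselves.

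The mechanism I would use is the path-crossing description from the proof of Lemma~\ref{lemma.phi bijection}: a braid hook of $t$ at $k$ is exactly a crossing of the promotion path $\L$ and the inverse promotion path $\R$ on a left inner corner, in configuration~\eqref{equation.allowed}. Crucially, the global bijection $\varphi$ cannot be reused directly, since $\partial^*_k\partial_k$ need not lie in $G$, so $\varphi$ is not $G$-equivariant. Instead I would track, as we step around the orbit by alternately applying $\tau_o$ and $\tau_e$, how such crossings are created and destroyed. Passing to reduced words makes this local: $\tau_o$ and $\tau_e$ are the even and odd commutation toggles, a braid hook is an available $s_1 s_2 s_1 \to s_2 s_1 s_2$ move (Lemma~\ref{lemma.up down braid}), and a single toggle alters the relevant configuration only in a position-bounded way. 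The heart of the argument is to assign to each braid-move occurrence in each word of the orbit a distinct step of the alternating cycle, so that the braid hooks biject with the $|O|$ words of the orbit; the hypotheses $\lambda_1>\lambda_2$ and $\lambda_\ell=1$ enter exactly as in Lemma~\ref{lemma.phi bijection}, forcing the two paths to begin with $\R$ above $\L$ and end with $\L$ above $\R$ so that crossings balance.

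I expect this last step to be the main obstacle, and it is precisely where the nonabelian structure is unavoidable: the paper observes that the statistic is \emph{not} homomesic under the cyclic subgroup $\langle \tau_o\tau_e\rangle$ nor under $\langle\tau_o\rangle,\langle\tau_e\rangle$, so no $\langle\tau_o\tau_e\rangle$-equivariant matching can exist, and the bijection must exploit the full reflection structure of the dihedral orbit. Concretely, a dihedral orbit is either a single cyclic suborbit or a union of two cyclic suborbits interchanged by $\tau_o$; in the latter case the two suborbits need \emph{not} individually have average one, and the content is that they balance jointly. Controlling the creation and annihilation of the factor $s_1 s_2 s_1$ across the two distinct toggle types, and verifying that the resulting assignment is a genuine bijection around the whole cycle, is the real work; I would organize it by following one ``active'' braid around the orbit and closing it up using the evacuation identities of Lemma~\ref{lemma.identities}, which tie $\tau_o,\tau_e$ to $\partial,\partial^*,\epsilon$.

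For the converse, suppose $\lambda_1=\lambda_2$ or $\lambda_\ell>1$, restricting to shapes that admit braid hooks at all (the degenerate single-row and single-column families, where the statistic vanishes identically and the action is trivially homomesic with constant zero, are excluded from the nondegenerate statement). I would break homomesy by producing two $G$-orbits with different average numbers of braid hooks. When $\lambda_\ell>1$ the forced ending ($\L$ above $\R$) of Lemma~\ref{lemma.phi bijection} is lost, so there are tableaux — and, I claim, entire orbits — in which $\L$ remains above $\R$ throughout and no braid hook ever occurs; comparing such an orbit (average zero) with one containing a crossing yields distinct averages. The case $\lambda_1=\lambda_2$ is symmetric, with the failure occurring at the top-left where the paths no longer start with $\R$ above $\L$. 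It suffices to exhibit one minimal offending shape together with an explicit pair of orbits in each case, and to check that these offending configurations persist in every larger shape violating the same inequality.
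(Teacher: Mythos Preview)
Your proposal has the right outline---an orbit-preserving bijection between braid occurrences and words---but it does not actually supply the bijection, and the tools you reach for are the wrong ones. The paper explicitly warns that the $\varphi$/path-crossing machinery of Lemma~\ref{lemma.phi bijection} does not preserve $\langle\tau_o,\tau_e\rangle$-orbits and so cannot be recycled here; similarly, the evacuation identities of Lemma~\ref{lemma.identities} play no role in the homomesy proof. What the paper does instead is entirely elementary and local: define $\Phi(k,\w):=\w.\tau_{o(k-2)}\cdots\tau_{o(1)}$, which manifestly stays in the orbit, and show $\Phi$ is a bijection by running the inverse. For a target $\w$, set $\w^{(j)}:=\w.\tau_{o(1)}\cdots\tau_{o(j)}$ and watch the sliding two-letter window $a_i:=\w^{(i-2)}_{i-1}$, $c_i:=\w^{(i-2)}_{i+1}$. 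The entire argument rests on a monotonicity lemma: $c_i-a_i>0$ iff $c_{i+1}-a_{i+1}\ge 0$, proved by a short case analysis using only the quadratic and braid rules. This forces the sequence $c_i-a_i$ to cross zero at most once, and the boundary hypotheses $\lambda_1>\lambda_2$ and $\lambda_\ell=1$ are exactly what makes it start positive and end negative, hence cross zero exactly once---at which unique $k$ there is a braid in $\w^{(k-2)}$. Your description (``track how crossings are created and destroyed'', ``follow one active braid around the orbit and close it up using evacuation'') never isolates this monotone quantity, which is the real content.

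For the ``only if'' direction, your plan to exhibit two orbits with different averages is more than you need and you do not verify it. The paper's argument is that $\Phi$ is always injective (same monotonicity), so the average on every orbit is at most one; when a boundary condition fails it produces an explicit word (e.g.\ $\w=120\cdots$ when $\lambda_1=\lambda_2$) on whose orbit $\Phi$ is strictly non-surjective, giving average strictly below one there. Your alternative route---finding an orbit with average zero and another with a crossing---would require an additional argument that such orbits coexist, which you have not supplied.
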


We reformulate this result in terms of reduced words.
Define $\rW(\lambda)$ to be the commutation class of reduced words which under Viennot's 
bijection correspond to $\rSYT(\lambda)$:
\[
	\nu \colon \rW(\lambda) \to \rSYT(\lambda)\;.
\]
For $\lambda=\Delta_n$ we recover the commutation class of the reduced word
$\w_0$ for $w_0$, that is, $\rW(\Delta_n)=\Red(\w_0)$.

\begin{theorem}
  \label{theorem.homomesy words}
  The number of braid moves in $\rW(\lambda)$ has expected value at
  most one. Furthermore, the expected number of braid moves is one if
  and only if $\lambda$ satisfies
  \begin{equation}
    \label{eq.lambda_condition}
    \lambda_1>\lambda_2 \qquad \text{and} \qquad \lambda_\ell=1
  \end{equation}
  for a partition $\lambda$ with $\ell$ parts
  or, equivalently, if every word $\w \in \rW(\lambda)$ satisfies
  $\w_1\leq \w_3$ and $\w_{N-2}\geq \w_N$ where $N=|\lambda|$.
  In this case, the number of braid moves is homomesic with respect to
  $\langle \tau_o, \tau_e \rangle$-orbits.
\end{theorem}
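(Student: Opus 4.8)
The plan is to work entirely in the language of reduced words, translating the tableau operators through Viennot's bijection $\nu$. First I would make the action of $\tau_o$ and $\tau_e$ explicit on a word $\w = \w_1 \cdots \w_N \in \rW(\lambda)$: the toggle $\tau_i$ swaps the letters in positions $i$ and $i+1$ when they commute (that is, when $|\w_i - \w_{i+1}| \ge 2$) and fixes $\w$ otherwise. Since $\tau_o = \prod_{i \text{ odd}} \tau_i$ acts on the disjoint adjacent pairs $(\w_1,\w_2),(\w_3,\w_4),\dots$ and $\tau_e = \prod_{i \text{ even}} \tau_i$ on the pairs $(\w_2,\w_3),(\w_4,\w_5),\dots$, the toggles within each operator commute, so $\tau_o$ and $\tau_e$ are well-defined involutions and $G = \langle \tau_o, \tau_e\rangle$ is dihedral. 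In this picture a braid move is a braid triple $\w_j\w_{j+1}\w_{j+2} = a\,(a\pm1)\,a$ that cannot be removed by commutations, and under $\nu$ these are exactly the braid hooks of Definition~\ref{definition.braid hook}; the two internal pairs of such a triple are both ``stuck'' (they differ by $1$), so a braid triple is locally a fixed configuration for both $\tau_o$ and $\tau_e$.

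Next I would establish the ``at most one'' statement and the equivalence of the two descriptions in \eqref{eq.lambda_condition}. I would rerun the path-crossing analysis of Lemma~\ref{lemma.phi bijection} \emph{without} assuming the boundary hypotheses: the forbidden configurations \eqref{equation.forbidden1} and \eqref{equation.forbidden2} still prevent any reverse crossing from ($\L$ above $\R$) back to ($\R$ above $\L$), so the only admissible crossing is the one of \eqref{equation.allowed}. Hence $\L$ and $\R$ cross at most once, the map $\varphi$ is always injective, and the expected number of braid moves is at most one. The crossing is \emph{forced} — giving equality — precisely when the paths begin with $\R$ above $\L$ and end with $\L$ above $\R$, which happens iff $\lambda_1 > \lambda_2$ and $\lambda_\ell = 1$. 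Translating these two endpoint conditions through the heap dictionary (the letter $1$ sits in the top-left cell and $N$ in the bottom-right cell) yields the equivalent word conditions $\w_1 \le \w_3$ and $\w_{N-2} \ge \w_N$ for every $\w \in \rW(\lambda)$.

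The heart of the argument is the homomesy refinement, for which I would construct, for each $G$-orbit $O$, a bijection between the set $\{(\w, \beta) : \w \in O,\ \beta \text{ a braid move of } \w\}$ and $O$ itself; this immediately gives $\sum_{\w \in O} b(\w) = |O|$, where $b(\w)$ denotes the number of braid moves of $\w$, so that the average is one on every orbit. The strategy is to follow a braid triple as the word is pushed forward under the alternating dynamics $\dots \tau_o \tau_e \tau_o \tau_e \dots$: because the internal pairs of a braid triple are stuck, the triple behaves as a coherent object transported by the surrounding brick-wall of commutations, and one records the phase at which it occupies a boundary position. Crucially, both reflections $\tau_o$ and $\tau_e$ must be used, since the statistic is genuinely \emph{not} homomesic for the cyclic subgroup $\langle \tau_o\tau_e\rangle$ (nor for $\langle\tau_o\rangle$ or $\langle\tau_e\rangle$); the pairing therefore has to exploit the full dihedral symmetry, matching a braid move recorded along the forward rotation with the corresponding word met along the reflected trajectory.

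The main obstacle is precisely this orbit-wise bijection. The global bijection $\varphi(k,t) = t.\partial^*_k\partial_k$ of Lemma~\ref{lemma.phi bijection} does \emph{not} respect $G$-orbits — the operator $\partial^*_k\partial_k$ is a product of all toggles in an order that does not lie in $\langle \tau_o,\tau_e\rangle$ — so one cannot simply restrict $\varphi$ to a single orbit, and a genuinely new construction is required. I expect the difficulty to lie in showing that, over one full period of the dihedral action, each element of $O$ is hit exactly once as the carrier of a braid move, with no double counting and no omission; the endpoint conditions $\lambda_1 > \lambda_2$ and $\lambda_\ell = 1$ are what guarantee that the boundary phases line up, forcing the count per orbit to be exactly $|O|$ rather than strictly smaller. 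Verifying this will require a careful case analysis of how the local braid-triple and wall configurations transform under $\tau_o$ and $\tau_e$ near the two ends of the word.
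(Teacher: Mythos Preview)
Your plan for the first two assertions (expected value at most one, with equality under \eqref{eq.lambda_condition}) is fine and matches the path-crossing argument of Lemma~\ref{lemma.phi bijection}; indeed, that argument shows $\varphi$ is always injective and surjective exactly under the boundary hypotheses, and the translation to the word conditions $\w_1\le \w_3$, $\w_{N-2}\ge \w_N$ is routine.

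The gap is in the homomesy refinement. You have correctly diagnosed that $\varphi$ does not respect $\langle\tau_o,\tau_e\rangle$-orbits and that a new orbit-preserving bijection is required, but your proposed mechanism --- ``follow a braid triple as it is transported to a boundary position'' --- is not the right picture and will not lead to a proof. The braid triple does \emph{not} persist as a coherent object under the alternating dynamics; applying $\tau_o$ or $\tau_e$ typically destroys it (one of the two outer letters gets swapped away with its neighbour on the other side), so there is nothing to ``follow''. Nor does the boundary play the role you suggest.

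What the paper actually does is the following. Define $\Phi(k,\w) := \w.\tau_{o(k-2)}\tau_{o(k-3)}\cdots\tau_{o(1)}$, i.e.\ apply $k-2$ alternating even/odd promotions. This visibly stays in the $\langle\tau_o,\tau_e\rangle$-orbit. The inverse is found not by tracking the braid but by a \emph{moving window} argument: from $\w$ form $\w^{(j)}=\w.\tau_{o(1)}\cdots\tau_{o(j)}$ and set $a_i=\w^{(i-2)}_{i-1}$, $c_i=\w^{(i-2)}_{i+1}$. The key technical lemma is that the sign of $c_i-a_i$ can only change from positive to nonpositive, never back (a short case analysis using the quadratic and braid rules on a length-$4$ subword). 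Under \eqref{eq.lambda_condition} one has $c_2-a_2>0$ and $c_{N-1}-a_{N-1}<0$, so there is a unique $k$ with $a_k=c_k$; by the braid rule this is exactly a braid at position $k$ in $\w^{(k-2)}$, giving $\Phi^{-1}(\w)=(k,\w^{(k-2)})$. When \eqref{eq.lambda_condition} fails one exhibits an explicit $\w$ (of the form $120\cdots$ or $\cdots021$) outside the image. The missing idea in your proposal is precisely this monotone sign-change of $c_i-a_i$ along the alternating trajectory; once you have it, the ``careful case analysis near the two ends'' you anticipate collapses to a one-line boundary check.
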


Note that the analogous statement fails for $n=7$ if one replaces the group
$\langle \tau_o, \tau_e \rangle$ by the cyclic group generated by the gyration 
operator $\tau_o\tau_e$ or any order two subgroup. Hence,
this theorem provides an example of a homomesy under a dihedral group
action, which in general is not homomesic under the cyclic subgroup
generated by $\tau_o \tau_e$ or order two subgroups (and hence any
abelian subgroup by~\cite[Lemma 1]{roby.2015}).
We also note that the bijection $\varphi$ of \eqref{equation.varphi}
does not preserve $\langle \tau_o,\tau_e\rangle$-orbits, so one cannot use it to prove
Theorem~\ref{theorem.homomesy words} (or equivalently, Theorem~\ref{theorem.homomesy}). 

To prove Theorem~\ref{theorem.homomesy words}, we define a $\langle \tau_e, \tau_o \rangle$-orbit preserving map
\begin{equation}
	\Phi \colon \{ (k,\w) \mid \w \in \rW(\lambda),
        \text{$k$ a braid in $\w$} \}
	\to \rW(\lambda)
\end{equation}
by
\begin{equation}
  \Phi(k,\w) := \w. \tau_{o(k-2)}\cdots \tau_{o(1)}\;,
\end{equation}
where for convenience:
\begin{equation}
\label{equation.tau oi}
  \tau_{o(i)} :=
  \begin{cases}
    \tau_o & \text{if $i$ is odd},\\
    \tau_e & \text{if $i$ is even.}
  \end{cases}
\end{equation}

Theorem~\ref{theorem.homomesy words} is then a direct consequence of the
following lemma.
\begin{lemma}
  \label{lemma.gyration}
  $\Phi$ is injective. Furthermore $\Phi$ is a bijection if and only
  if $\lambda$ satisfies Equation~\eqref{eq.lambda_condition}.
\end{lemma}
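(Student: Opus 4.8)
The plan is to prove both assertions by reducing, for each target word $v \in \rW(\lambda)$, to a count of the values $k$ for which $v$ lies in the image of the braid-pair $(k,\cdot)$. Since each of $\tau_o,\tau_e$ is an involution, the operator $\tau_{o(k-2)}\cdots\tau_{o(1)}$ is invertible with inverse $\tau_{o(1)}\cdots\tau_{o(k-2)}$; in particular $\Phi$ sends $(k,\w)$ into the $\langle\tau_o,\tau_e\rangle$-orbit of $\w$, so $\Phi$ is orbit-preserving, and for each fixed $k$ right-multiplication by $\tau_{o(k-2)}\cdots\tau_{o(1)}$ is a bijection of $\rW(\lambda)$, whence $\w\mapsto\Phi(k,\w)$ is injective on its domain. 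Consequently, given $v$, each candidate $k$ determines a unique preimage word $\w^{(k)} := v.\tau_{o(1)}\tau_{o(2)}\cdots\tau_{o(k-2)}$, and $(k,\w^{(k)})$ lies in the domain of $\Phi$ precisely when $k$ is a braid of $\w^{(k)}$. Thus injectivity of $\Phi$ is equivalent to: for every $v$, at most one $k$ makes $k$ a braid of $\w^{(k)}$; and surjectivity is equivalent to: for every $v$ at least one such $k$ exists. Once bijectivity is established, the homomesy refinement is immediate, since $\Phi$ then restricts to a bijection on each orbit $O$ from $\{(k,\w):\w\in O,\ k \text{ a braid of } \w\}$ onto $O$, forcing the number of braids in $O$ to equal $|O|$.

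Next I would translate the even and odd operators into local moves on the reduced word read in the linear-extension order used by $\nu$. Recall that $\tau_i$ exchanges the letters in positions $i$ and $i+1$ exactly when they commute (non-adjacent columns), so $\tau_o$ (resp.\ $\tau_e$) performs all such exchanges on the odd (resp.\ even) bricks at once; the alternating product $\tau_{o(k-2)}\cdots\tau_{o(1)}$ is therefore $k-2$ layers of a brick-wall commutation sort. By Lemma~\ref{lemma.up down braid}, every braid has the form $s_1 s_2 s_1$, so a braid at $k$ is a rigid block occupying the three consecutive positions indexed by $k-1,k,k+1$. Passing from $k$ to $k+1$ appends one further layer, $\w^{(k+1)}=\w^{(k)}.\tau_{o(k-1)}$, while shifting the tested positions by one; I would show that these two effects conspire so that a single distinguished window slides once through the word, and that $k$ is a braid of $\w^{(k)}$ exactly when this window rests on an $s_1 s_2 s_1$ configuration. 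This is the word-theoretic avatar of the crossing of the promotion and inverse-promotion paths $\mathcal{L}$ and $\mathcal{R}$ in Lemma~\ref{lemma.phi bijection}: a valid $k$ is the analogue of the unique left-inner-corner crossing of type~\eqref{equation.allowed}, while the forbidden patterns~\eqref{equation.forbidden1} and~\eqref{equation.forbidden2} are precisely what block the window from switching sides more than once.

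For injectivity I would argue that these forbidden configurations prevent two distinct valid $k$ from coexisting: once the window has crossed and produced a braid, the commutation constraints forbid it from crossing back, so at most one $k$ works for any $v$, with no hypothesis on $\lambda$. For the surjectivity characterization the boundary hypotheses enter exactly as in Lemma~\ref{lemma.phi bijection}: the equivalent boundary conditions $\w_1\le\w_3$ and $\w_{N-2}\ge\w_N$ (coming from $\lambda_1>\lambda_2$ and $\lambda_\ell=1$, respectively) pin down the starting and ending side of the window, so at least one crossing — hence at least one valid $k$ — must occur, giving surjectivity. Conversely, if $\lambda_1=\lambda_2$ or $\lambda_\ell>1$, I would exhibit a word $v$ whose boundary letters violate $\w_1\le\w_3$ or $\w_{N-2}\ge\w_N$ and for which the window never crosses; such a $v$ has no preimage, so $\Phi$ fails to be surjective, establishing the ``only if'' direction.

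The main obstacle is making the distinguished window precise and proving that the full even-odd layers, which move every letter of the word at once rather than a single letter as in the snake $\partial^*_k\partial_k$ defining $\varphi$ in~\eqref{equation.varphi}, nonetheless create or destroy exactly one braid along the lower-left boundary. In other words, the delicate step is to verify that all the collateral commutation swaps away from the window neither spawn nor annihilate braids, so that the clean single-crossing count survives. This is exactly the point at which the reduced-word formulation is indispensable, since the $\varphi$-bijection itself does not preserve $\langle\tau_o,\tau_e\rangle$-orbits and so cannot be used to deduce Theorem~\ref{theorem.homomesy words}.
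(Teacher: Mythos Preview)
Your overall strategy matches the paper's exactly: reduce to showing that for every target word $v$ the sequence $\w^{(k)}:=v.\tau_{o(1)}\cdots\tau_{o(k-2)}$ has at most one $k$ with $k$ a braid of $\w^{(k)}$, and at least one such $k$ precisely under~\eqref{eq.lambda_condition}. You also correctly isolate the ``only if'' witnesses (words beginning $120\cdots$ or ending $\cdots 021$). So the architecture is right.

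The genuine gap is the step you yourself flag as the main obstacle: you never actually prove that the window cannot cross twice. Appealing to the forbidden $2\times 2$ configurations~\eqref{equation.forbidden1} and~\eqref{equation.forbidden2} from the $\varphi$-argument does not suffice, because those are statements about the promotion and inverse-promotion sliding paths of a single tableau, whereas here each step applies a full layer $\tau_{o(i)}$ that moves \emph{every} pair of letters simultaneously; there is no single $\mathcal{L}$ or $\mathcal{R}$ to speak of. What the paper does instead is purely word-theoretic: setting $a_i:=\w^{(i-2)}_{i-1}$ and $c_i:=\w^{(i-2)}_{i+1}$, it proves a monotonicity lemma asserting that $c_i-a_i>0$ if and only if $c_{i+1}-a_{i+1}\ge 0$. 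This is established by writing the four consecutive letters $abcd$ of $\w^{(i-2)}$ at positions $i-1,\ldots,i+2$, computing their images $xyzt$ under one layer $\tau_{o(i+1)}$, observing that $t-y$ differs from $c-a$ by at most $2$, and then checking the three near-zero cases $c-a\in\{-2,0,\pm 1\}$ using the quadratic and braid rules. That case analysis is the entire technical content of the lemma; once you have it, uniqueness and (under~\eqref{eq.lambda_condition}) existence of the crossing follow immediately from the boundary inequalities $a_2<c_2$ and $a_{N-1}>c_{N-1}$, exactly as you outlined. Without it, the injectivity claim remains an unproved assertion.
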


To prove Lemma~\ref{lemma.gyration}, we need some preliminary
notation and results. For simplicity, we write all reduced words
$s_{i_1} \cdots s_{i_k}$ simply as a word $i_1 \ldots i_k$.
Take $\w\in \rW(\lambda)$. Recall that $\w$ cannot contain a factor
of the form $aa$ (which we call the \defn{quadratic rule}) and that,
if it contains a factor of the form $aba$, then $aba=a(a+1)a$ (which
we call the \defn{braid rule}) by a slight extension of Lemma~\ref{lemma.up down braid}. 
We say that $1<k<N$ is a \defn{braid} in $\w$ if there is a braid $a(a+1)a$ with the $a+1$ 
in position $k$ of $\w$.

For $j\ge 0$, define
\begin{equation}
\label{equation.wj}
    \w^{(j)} := \w. \tau_{o(1)} \cdots \tau_{o(j)}\;.
\end{equation}
Note that $\w^{(j)}$ runs through the $\langle \tau_o, \tau_e \rangle$-orbit of $\w$. As it moves 
through the first half of the orbit, we follow what happens in a moving
window of length $2$, setting $a_i:=\w^{(i-2)}_{i-1}$ and
$c_i:=\w^{(i-2)}_{i+1}$. Here is an example for $\w=1231423121 \in \Red(\mathbf{w}_0)$:
\renewcommand{\r}[1]{{\color{red}#1}}
\begin{displaymath}
\begin{array}{|c|c|c|c|c|}
\hline
i & \w^{(i-2)} & a_i & c_i & c_i-a_i \\\hline
2 & \r12\r31423121 & 1 & 3 & 2\\
3 & 1\r21\r3241321 & 2 & 3 & 1\\
4 & 12\r13\r214321 & 1 & 2 & 1\\
5 & 123\r12\r14321 & 1 & 1 & 0\\
6 & 1231\r24\r1321 & 2 & 1 & -1\\
7 & 12134\r23\r121 & 2 & 1 & -1\\
8 & 121342\r31\r21 & 3 & 2 & -1\\
9 & 1231241\r32\r1 & 3 & 1 & -2\\\hline
\end{array}
\end{displaymath}
Note that there exists a unique position $k$ where $a_k=c_k$, namely
$k=5$; for $i<k$, $a_i<c_i$ while for $i>k$, $a_i>c_i$. In fact, $k$
is the position of a braid in $\w^{(k-2)}$. This implies that $\w$
admits exactly one preimage by $\Phi$, namely
$\Phi^{-1}(\w)=(k, \w^{(k-2)})$.

We now move on to proving that this is a general feature whenever
$\lambda$ satisfies Equation~\eqref{eq.lambda_condition}; this implies
that $\Phi$ is indeed a bijection. When the conditions are not
satisfied, uniqueness still holds but existence fails for at least one
word $\w\in \rW(\lambda)$, and surjectivity will be lost.

\begin{lemma}
\label{lemma.comparison}
Let $\w \in \rW(\lambda)$, $1<i<N-1$ and define $\w'=\w.\tau_{o(i+1)}$.
Then, $\w_{i-1}<\w_{i+1}$ if and only if $\w'_{i}\le \w'_{i+2}$.
\end{lemma}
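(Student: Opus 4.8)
The plan is to reduce the claim to a purely local analysis of the four consecutive letters $\w_{i-1},\w_i,\w_{i+1},\w_{i+2}$, governed by the quadratic and braid rules recalled before the lemma. First I would record which elementary operators inside $\tau_{o(i+1)}$ can move the letters occupying positions $i$ and $i+2$. Since $\tau_{o(i+1)}$ is the product of those $\tau_j$ whose index $j$ has the same parity as $i+1$, and since $1<i<N-1$ guarantees that $\tau_{i-1}$ and $\tau_{i+1}$ are legitimate operators, the factors $\tau_{i-1}$ and $\tau_{i+1}$ both occur in $\tau_{o(i+1)}$, while $\tau_i$ and $\tau_{i+2}$ do \emph{not} (their indices are of the parity opposite to $i+1$). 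As $\tau_{i-1}$ and $\tau_{i+1}$ act on the disjoint pairs $(i-1,i)$ and $(i+1,i+2)$, they are the only factors touching positions $i$ or $i+2$. Recalling that in the word model $\tau_j$ exchanges $\w_j$ and $\w_{j+1}$ exactly when $|\w_j-\w_{j+1}|>1$ (the letters commute) and fixes $\w$ otherwise, I obtain
\[
  \w'_i=\begin{cases}\w_{i-1}&\text{if }|\w_{i-1}-\w_i|>1,\\ \w_i&\text{if }|\w_{i-1}-\w_i|=1,\end{cases}
  \qquad
  \w'_{i+2}=\begin{cases}\w_{i+1}&\text{if }|\w_{i+1}-\w_{i+2}|>1,\\ \w_{i+2}&\text{if }|\w_{i+1}-\w_{i+2}|=1.\end{cases}
\]

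Writing $a=\w_{i-1}$, $b=\w_i$, $c=\w_{i+1}$, $d=\w_{i+2}$, the claim becomes $a<c\iff\w'_i\le\w'_{i+2}$, with $\w'_i\in\{a,b\}$ and $\w'_{i+2}\in\{c,d\}$. I would split into the four cases determined by whether each of the two swaps fires. In every case the target equivalence is an elementary inequality between integers, and the only way it can fail is through a collision among $a,b,c,d$ at the boundary. These collisions are eliminated by the quadratic rule $\w_j\neq\w_{j+1}$ together with the braid rule, which in $\rW(\lambda)$ forces every palindromic factor $xyx$ to be $x(x+1)x$, so that a ``down'' triple $x(x-1)x$ never appears. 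For example, if $a=c$ then $aba$ is palindromic, forcing $b=a+1$; and a factor such as $(c{+}1)\,c\,(c{+}1)$ is outright forbidden. Running through the cases, almost all delicate collisions disappear because the offending triple $x(x-1)x$ is literally present as a length-three factor (e.g.\ the configuration $(c,c+1,c,c+1)$ contains $\w_i\w_{i+1}\w_{i+2}=(c{+}1)\,c\,(c{+}1)$), or because an equality $\w_i=\w_{i+1}$ contradicts the quadratic rule.

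The single genuinely delicate case, which I expect to be the main obstacle, is when exactly $\tau_{i-1}$ fires (not $\tau_{i+1}$) with $d=c+1$ and $a=c+1$: here the configuration is $(\w_{i-1},\w_i,\w_{i+1},\w_{i+2})=(c+1,b,c,c+1)$ with $|b-(c+1)|>1$. It contains no palindromic triple, yet it would give $\w'_i=\w'_{i+2}=c+1$ while $a=c+1>c$, breaking the equivalence. To exclude it I would use that $\rW(\lambda)$ is a \emph{full} commutation class: since $|b-(c+1)|>1$, the letters $b$ and $c+1$ commute, so swapping positions $i-1$ and $i$ yields a word $\tilde{\w}\in\rW(\lambda)$ with $\tilde{\w}_i\tilde{\w}_{i+1}\tilde{\w}_{i+2}=(c+1)\,c\,(c+1)$, a forbidden ``down'' triple. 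Hence this configuration cannot occur, and the commutation argument is needed only this once. With it dispatched, every remaining case closes by the integer inequalities recorded above, completing the equivalence.
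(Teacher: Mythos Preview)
Your proof is correct and follows essentially the same approach as the paper's: a local case analysis on the four letters $a,b,c,d=\w_{i-1},\w_i,\w_{i+1},\w_{i+2}$, using the quadratic and braid rules to eliminate the borderline configurations. The paper organizes its cases by the value of $c-a$ (after observing that $\w'_{i+2}-\w'_i$ differs from $c-a$ by at most $2$), while you organize by which of $\tau_{i-1},\tau_{i+1}$ fires; the underlying computations coincide.

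For your one ``genuinely delicate'' case $(c{+}1,b,c,c{+}1)$ with $|b-(c{+}1)|>1$, you need not introduce a separate commutation neighbor $\tilde\w$: since each $\tau_j$ preserves $\rW(\lambda)$, the word $\w'$ itself lies in $\rW(\lambda)$, and $\w'_i\w'_{i+1}\w'_{i+2}=(c{+}1)\,c\,(c{+}1)$ already exhibits the forbidden down-triple. This is exactly how the paper handles the same configuration (applying the braid rule to $\w'$ rather than to $\w$), and it streamlines the argument slightly.
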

\begin{proof}
  Let $abcd$ and $xyzt$ be the subwords of $\w$ and $\w'$ at positions
  $i-1,\ldots,i+2$. With this notation, we want to prove that
  \begin{equation}
    \label{equation.sign}
    c-a > 0 \Longleftrightarrow t-y \geq 0\;.
  \end{equation}

  From the action of $\tau_{o(i+1)}$, we have $xy=ab$ if $b=a\pm 1$ and $xy=ba$ otherwise.
  Similarly, $zt=cd$ if $d=c\pm 1$ and $zt=cd$ otherwise. It follows that $t-y$ differs from 
  $c-a$ by at most $\pm 2$. A counterexample to Equation~\eqref{equation.sign} can therefore only 
  occur if $c-a$ is close to zero, namely in one of the following three cases:
  
  \smallskip

  \noindent
  \textbf{Case 1:} $c-a=-2$ and $t-y=0$; from the action of $\tau_{o(i+1)}$, one
  necessarily has $xyzt=abcd=ab(a-2)b$ with $b=a-1$; this is forbidden
  by the braid rule.
  
  \smallskip

  \noindent
  \textbf{Case 2:} $c-a=0$; then by the braid rule $abcd=a(a+1)ad$;
  since $\w$ is reduced $d\ne a+1$; if $d=a-1$ then
  $xyzt=a(a+1)a(a-1)$ and $t-z=-2<0$; otherwise $xyzt=a(a+1)da$ and
  $t-z=-1$; in both cases Equation~\eqref{equation.sign} is satisfied.

  \smallskip

  \noindent
  \textbf{Case 3:} $c-a=\epsilon$ with $\epsilon=\pm 1$; from the action of
  $\tau_{o(i+1)}$, $xyzt$ takes one of the following forms:
  \begin{equation}
    xyzt = \begin{cases}
      a (a\pm 1)(a+\epsilon)(a+\epsilon\pm 1),\\
      a (a\pm 1)d(a+\epsilon),\\
      ba(a+\epsilon)(a+\epsilon\pm 1),\\
      bad(a+\epsilon).\\
    \end{cases}
  \end{equation}
  If the third form is $ba(a+1)a$, then $\epsilon=1$, $t-y=0$, and
  Equation~\eqref{equation.sign} is satisfied. Otherwise, using the
  quadratic and braid rules one further deduces that $y=a-\epsilon$ in
  the two first forms and that $t=a+2\epsilon$ in the third form; it
  follows that, in all forms, $t-y$ has the same sign as $\epsilon$
  and Equation~\eqref{equation.sign} is satisfied.
\end{proof}

\begin{lemma}
\label{lemma.unique}
Let $\mathbf{w}\in \rW(\lambda)$ and define $\mathbf{w}^{(j)}$ as in~\eqref{equation.wj}. Then there exists
at most one $1<k<N$ such that
\begin{equation}
\label{equation.equality}
    \mathbf{w}_{k-1}^{(k-2)} = \mathbf{w}_{k+1}^{(k-2)}.
\end{equation}
If $\lambda$ further satisfies Equation~\eqref{eq.lambda_condition},
then existence is guaranteed.
\end{lemma}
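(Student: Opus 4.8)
The plan is to reduce the whole statement to a sign-monotonicity property of a single integer sequence. Using the moving-window notation introduced just before Lemma~\ref{lemma.comparison}, set $a_i := \w^{(i-2)}_{i-1}$ and $c_i := \w^{(i-2)}_{i+1}$ for $2 \le i \le N-1$, and let $d_i := c_i - a_i$. With this notation, condition~\eqref{equation.equality} at position $k$ is exactly the statement $a_k = c_k$, i.e. $d_k = 0$. So everything comes down to controlling where the sequence $(d_i)$ vanishes.

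The first and crucial step is to extract from Lemma~\ref{lemma.comparison} the monotonicity
\[
  d_i > 0 \iff d_{i+1} \ge 0 \qquad (2 \le i \le N-2).
\]
To see this, I would observe that $\w^{(i-1)} = \w^{(i-2)}.\tau_{o(i-1)}$ by~\eqref{equation.wj}, and that $\tau_{o(i-1)} = \tau_{o(i+1)}$ because $i-1$ and $i+1$ have the same parity. Hence $\w^{(i-1)} = \w^{(i-2)}.\tau_{o(i+1)}$, and Lemma~\ref{lemma.comparison} applied to $\w^{(i-2)}$ at index $i$ reads off precisely the equivalence $a_i < c_i \iff a_{i+1} \le c_{i+1}$ once the four relevant positions are matched: its left-hand side is $a_i < c_i$ and its right-hand side is $\w^{(i-1)}_i \le \w^{(i-1)}_{i+2}$, that is $a_{i+1} \le c_{i+1}$. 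This operator identification through parity is the one delicate bookkeeping step, and I expect it to be the main obstacle: everything afterward is a mechanical propagation, but getting the indices and parities to line up so that Lemma~\ref{lemma.comparison} applies verbatim is where the argument could go wrong.

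Uniqueness then follows formally from the monotonicity. Suppose $d_k = 0$. Reading the equivalence in its contrapositive form $d_i \le 0 \Rightarrow d_{i+1} < 0$, an immediate induction starting from $d_k = 0$ gives $d_j < 0$ for all $j > k$; reading it in the form $d_{i+1} \ge 0 \Rightarrow d_i > 0$ and working downward from $d_k \ge 0$ gives $d_j > 0$ for all $j < k$. In particular $d_j \ne 0$ for every $j \ne k$, so at most one index satisfies~\eqref{equation.equality}.

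Finally, for existence under Equation~\eqref{eq.lambda_condition}, I would feed in two boundary inequalities. The hypotheses $\lambda_1 > \lambda_2$ and $\lambda_\ell = 1$ translate, via Viennot's bijection, into the condition that every word in the commutation class $\rW(\lambda)$ satisfies $\w_1 \le \w_3$ and $\w_{N-2} \ge \w_N$ (this is the equivalence recorded in Theorem~\ref{theorem.homomesy words}, which I would verify directly from the heap description). Applying the first to $\w = \w^{(0)}$ yields $a_2 \le c_2$, i.e. $d_2 \ge 0$; applying the second to $\w^{(N-3)} \in \rW(\lambda)$ yields $a_{N-1} \ge c_{N-1}$, i.e. $d_{N-1} \le 0$. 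Now suppose no index satisfied~\eqref{equation.equality}, so every $d_i \ne 0$. Then $d_2 \ge 0$ forces $d_2 > 0$, and propagating $d_i > 0 \Rightarrow d_{i+1} \ge 0 \Rightarrow d_{i+1} > 0$ up to $i = N-2$ gives $d_{N-1} > 0$, contradicting $d_{N-1} \le 0$. Hence some $k$ with $d_k = 0$ must exist, which together with the uniqueness above completes the proof.
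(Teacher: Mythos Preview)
Your proposal is correct and follows essentially the same approach as the paper: both arguments reformulate Lemma~\ref{lemma.comparison} as a one-step sign-monotonicity for the moving-window differences, propagate it forward and backward to get uniqueness, and then combine the two boundary inequalities coming from $\lambda_1>\lambda_2$ and $\lambda_\ell=1$ to force a zero by contradiction. Your write-up is in fact a bit more explicit than the paper's about the parity bookkeeping (the identification $\tau_{o(i-1)}=\tau_{o(i+1)}$) needed to apply Lemma~\ref{lemma.comparison} verbatim.
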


\begin{proof}
  The statement of Lemma~\ref{lemma.comparison} can be reformulated as
  $\mathbf{w}^{(i)}_{i-1}\ge \mathbf{w}^{(i)}_{i+1}$ if and only if
  $\mathbf{w}^{(i+1)}_{i}>\mathbf{w}^{(i+1)}_{i+2}$.  Hence, if
  $\mathbf{w}^{(k)}_{k-1}=\mathbf{w}^{(k)}_{k+1}$, then
  $\mathbf{w}^{(j)}_{j-1}> \mathbf{w}^{(j)}_{j+1}$ for all $j>k$.
  This implies uniqueness.

  Suppose now that $\lambda$ satisfies
  Equation~\eqref{eq.lambda_condition} and that there is no $k$ such
  that~\eqref{equation.equality} holds. Using that
  $\lambda_1>\lambda_2$ and $\lambda_\ell=1$, it follows that
  $\mathbf{w}^{(0)}_1<\mathbf{w}^{(0)}_3$ and
  $\mathbf{w}^{(N-3)}_{N-2}>\mathbf{w}^{(N-3)}_N$.  Note that
  $\mathbf{w}^{(i+1)} = \mathbf{w}^{(i)}.\tau_{o(i+1)}$, so that we
  can move from $\mathbf{w}^{(0)}$ to $\mathbf{w}^{(N-3)}$ by
  successive applications of the operator $\tau_{o(i+1)}$ for
  $1<i<N-1$.  By Lemma~\ref{lemma.comparison}, it is not possible to
  move directly from
  $\mathbf{w}_{i-1}^{(i-2)} < \mathbf{w}_{i+1}^{(i-2)}$ to
  $\mathbf{w}_{i}^{(i-1)} > \mathbf{w}_{i+2}^{(i-1)}$. This proves the
  existence of a $k$ such that~\eqref{equation.equality} holds.
\end{proof}

\begin{proof}[Proof of Lemma~\ref{lemma.gyration}]
  Recall that, by the braid rule, for any $\w\in \rW(\lambda)$ and any
  position $i$, the equality $\w_{i-1}=\w_{i+1}$ occurs if and only if
  $i$ is a braid of $\w$.

  Assume first that $\lambda$ satisfies
  Equation~\eqref{eq.lambda_condition}. Take
  $\w \in \rW(\lambda)$. By Lemma~\ref{lemma.unique}, there exists a
  unique $k$ with $1<k<N$ such that $k$ is a braid of
  $\w^{(k-2)}$. Hence $(k, \w^{(k-2)})$ is the unique preimage of $\w$
  by $\Phi$. Therefore, $\Phi$ is a bijection, as desired.

  Otherwise Lemma~\ref{lemma.unique} still guarantees that there
  exists at most one preimage of $\w$ by $\Phi$; hence $\Phi$ is still
  an injection. However, if $\lambda_1=\lambda_2$, there exists a word
  of the form $\w=120\cdots$ in $\rW(\lambda)$; for this word,
  $a_2>c_2$ and therefore $a_i>c_i$ for $2\leq i<N$; hence $k$ is
  never a braid of $\w^{(k-2)}$, and $\w$ is not in the image of
  $\Phi$. When instead $\lambda_\ell=1$ there exists some word of the
  form $\w=\cdots021$ in $\rW(\lambda)$, and the same argument
  applies. Therefore, in both cases, $\Phi$ is not surjective.
\end{proof}

\begin{remark}
It would be interesting to explain the homomesy property stated in this section
by finding an equivariant bijection from right-justified tableaux (equipped with the action of
the even and odd promotion operators) to some other combinatorial model
equipped with a natural dihedral action.
\end{remark}

\subsection{Homomesy for posets}
\label{subsection.homomesy poset}

As discussed in Section~\ref{section.heaps}, the set $\rSYT(\lambda)$ can be viewed as the set of linear extensions 
of a poset with a unique minimal and maximal element. In this section,
we provide a homomesy result of similar nature
for posets, where the statistic is descents with respect to order ideals.

Let $P$ be a finite poset with $n:=|P|$. Denote by $\mathcal{L}(P)$ the set of linear extensions of $P$
and by $J(P)$ the set of order ideals of $P$. 
For $L \in \mathcal{P}$ and $I \in \mathcal{J}(P)$, let
\[
	\des_I(L):=\{ p \in I \mid p \lessdot L^{-1}(L(p)+1) \not \in I \}
\]
be the set of elements $p$ of $I$ that are covered by an element not in $I$ whose labeling under $L$ is 
exactly one greater than the label of $p$.  We call an element $p \in \des_I(L)$ a \defn{descent} of $L$.
We can define operators $\tau_i$ for $1\le i < n$ on a linear extension $L$ by interchanging $i$ and $i+1$ in $L$
if the result is a linear extension of $P$, and $L$ otherwise. As before, 
$\tau_o=\prod_{i \text{ odd}}\tau_i$, $\tau_e=\prod_{i \text{ even}}\tau_i$, and $\tau_{o(i)}$ as
in~\eqref{equation.tau oi}.

\begin{theorem}
	Let $P$ be a poset with minimal element $\hat{0}$ and maximal element $\hat{1}$, and fix 
	$I \in \mathcal{J}(P)\setminus \{ \emptyset,P\}$.  Then there is a $\langle \tau_o,\tau_e\rangle$-orbit-preserving 
	bijection between $\{(p,L) \mid L \in \mathcal{L}(P), p \in \des_I(L)\}$ and $\mathcal{L}(P)$.  In particular, the number 
	of descents in $\mathcal{L}(P)$ is homomesic with respect to $\langle \tau_o,\tau_e\rangle$-orbits, with 
	expected value one.
\label{thm:edges}
\end{theorem}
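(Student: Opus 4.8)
The plan is to mirror the reduced-word argument of Lemma~\ref{lemma.gyration}, replacing braids by descents and the braid/quadratic rules by the order-ideal structure. The first step is to record a clean local characterization of a descent in terms of the operators $\tau_i$: recall that $\tau_j$ fixes a linear extension $L$ exactly when $L^{-1}(j) \lessdot L^{-1}(j+1)$ (the two consecutively labelled elements are comparable, hence form a cover) and swaps their labels otherwise. Writing $\chi_L(j) := 1$ if $L^{-1}(j) \in I$ and $0$ otherwise, an element $p$ with $L(p)=j$ lies in $\des_I(L)$ if and only if $\tau_j L = L$ and $\chi_L(j) = 1 > 0 = \chi_L(j+1)$; that is, a descent is a ``$10$''-step of the membership word $\chi_L$ occurring at a cover. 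I would then define the orbit-preserving map $\Phi(p,L) := L.\tau_{o(j-2)}\cdots\tau_{o(1)}$ (with the indexing fixed to match $\Phi$ of Theorem~\ref{theorem.homomesy words}), which is orbit-preserving since it is a word in $\tau_o,\tau_e$, and show it is a bijection by exhibiting for each $L$ a unique preimage $(p, L^{(k-2)})$ along the half-orbit $L^{(j)} := L.\tau_{o(1)}\cdots\tau_{o(j)}$.

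The boundary conditions that were imposed by hand in the word setting ($\lambda_1 > \lambda_2$ and $\lambda_\ell = 1$) are now automatic: since $I$ is a nonempty order ideal it contains every minimal element, so $\hat{0}\in I$, and since $I \neq P$ is downward closed the top element satisfies $\hat{1}\notin I$. As $\hat{0}$ and $\hat{1}$ receive labels $1$ and $n$ in every $L \in \mathcal{L}(P)$, the membership word $\chi_L$ always begins with $1$ and ends with $0$. This is exactly the input needed to guarantee \emph{existence} of a detected descent along the sweep, so the theorem holds with expected value \emph{exactly} one (rather than ``at most one'') for every proper nonempty $I$.

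The heart of the argument, and the step I expect to be the main obstacle, is the poset analogue of the comparison Lemma~\ref{lemma.comparison}: tracking, through the sliding length-$2$ window on $L^{(i-2)}$, the relative position of the $I$/complement interface, and proving that under one application of $\tau_{o(i+1)}$ this interface can advance past the window only monotonically. Concretely I would perform a local case analysis on the four consecutively labelled elements in the window, split according to which adjacent pairs are covers (so $\tau$ fixes them) versus incomparable (so $\tau$ swaps their labels) and according to their $\chi$-values. The role played by the quadratic and braid rules in Lemma~\ref{lemma.comparison} is here played by the \emph{downward-closure of $I$} together with the linear-extension condition: these forbid the configurations that would otherwise permit a non-monotone crossing, precisely as configurations~\eqref{equation.forbidden1} and~\eqref{equation.forbidden2} were forbidden in Lemma~\ref{lemma.phi bijection}. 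Granting this comparison lemma, monotonicity yields uniqueness of the transition point $k$ exactly as in Lemma~\ref{lemma.unique}, the boundary word ``$1\cdots 0$'' yields its existence, and the resulting unique preimage makes $\Phi$ a bijection. Counting then gives $\sum_{L} |\des_I(L)| = |\mathcal{L}(P)|$ orbit by orbit, which is the homomesy statement with average one.
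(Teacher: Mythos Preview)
Your overall architecture is right and matches the paper: define an orbit-preserving map $\Phi$ by pushing a descent to the front with a word in $\tau_o,\tau_e$, and invert it by sweeping through the half-orbit $L^{(j)}$ until a unique ``crossing point'' is found; the boundary conditions $\hat 0\in I$, $\hat 1\notin I$ replace $\lambda_1>\lambda_2$, $\lambda_\ell=1$ exactly as you say.

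Where you diverge from the paper is in the core step. You propose to reprove an analogue of Lemma~\ref{lemma.comparison} by a four-letter local case analysis on covers versus incomparable pairs and on the $\chi$-values. This can be made to work, but it is much heavier than necessary. The paper bypasses any comparison lemma by making a single structural observation: along the sweep $L_1=L,\ L_{i+1}=L_i.\tau_{o(i)}$, the sequence of poset elements $p_i:=L_i^{-1}(i)$ is a \emph{lazy saturated chain} from $\hat 0$ to $\hat 1$. Indeed, only $\tau_i$ inside $\tau_{o(i)}$ touches the label $i$; if it swaps, then $p_{i+1}=p_i$, and if it does not swap then the elements labelled $i,i+1$ are comparable with consecutive labels, hence $p_i\lessdot p_{i+1}$. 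Since a weakly increasing chain in $P$ leaves a downward-closed set at most once and must leave it (it goes from $\hat 0\in I$ to $\hat 1\notin I$), there is a \emph{unique} $k$ with $p_{k-1}\in I$ and $p_k\notin I$; at that step $\tau_{k-1}$ did not swap, so $p_{k-1}\lessdot p_k$ and $p_{k-1}\in\des_I(L_{k-1})$, giving the unique preimage. No window bookkeeping or case split is needed.

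Two small corrections. First, your indexing $\Phi(p,L)=L.\tau_{o(j-2)}\cdots\tau_{o(1)}$ is off; the paper (and the computation above) uses $\Phi(p,L)=L.\tau_{o(L(p))}\tau_{o(L(p)-1)}\cdots\tau_{o(1)}$, starting at $L(p)$ rather than $L(p)-2$. Second, in your local characterization you should note that $\tau_jL=L$ forces $L^{-1}(j)\lessdot L^{-1}(j+1)$ (not merely $<$): an intermediate element would receive a label strictly between $j$ and $j+1$. This is exactly the fact that makes the tracked sequence a saturated chain and is what replaces the quadratic/braid rules in this setting.
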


\begin{proof}
  Given $L \in \mathcal{L}(P)$, consider the sequence of linear extensions $L_1,L_2,\ldots,L_n$ defined as
  $L_1:=L$ and $L_{i+1}:=L_i.\tau_{o(i)}.$
  As $i$ increases, the sequence of elements of $P$ labeled by $i$ in $L_i$ form a path from $\hat{0}$ to $\hat{1}$ as follows.  At each step 
  from $L_i$ to $L_{i+1}$, there are two choices:
  \begin{itemize}
    \item if $\tau_{o(i)}$ swaps the labels $i$ and $i+1$, then our path remains constant;
    \item otherwise, $i+1$ covers $i$ and so we have extended the path.
  \end{itemize}

  Since this is a path from $\hat{0}$ to $\hat{1}$, there is a unique position $k:=k(L)$ in the sequence 
  $L_1,L_2,\ldots,L_n$ such that $L_{k-1}^{-1}(k-1) \in I$ but $L_{k}^{-1}(k) \not \in I$.

  We may therefore define the $\langle \tau_o,\tau_e\rangle$-orbit-preserving bijection 
  \[
  	\Phi: \{(p,L) \mid L \in \mathcal{L}(P), p \in \des_I(L)\} \to \mathcal{L}(P)
  \]
  by
  \[
  	\Phi(p,L) := L.\tau_{o(L(p))}.\tau_{o(L(p)-1)}.\ldots.\tau_{o(1)}\;.\qedhere
  \]
\end{proof}

\begin{corollary}
	Let $P$ be a poset with $\hat{0}$ and $\hat{1}$, and fix $I \in \mathcal{J}(P)\setminus \{ \emptyset,P\}.$
	Then 
	\[
		|\mathcal{L}(P)| = \sum_{L \in \mathcal{L}(P)} \mid \des_I(L)|\;.
	\]
\label{cor:edges}
\end{corollary}

H.~Thomas has kindly provided a beautiful geometric proof of Corollary~\ref{cor:edges}.  We recall that the
\defn{order polytope} $\mathcal{O}(P)$ of $P$ is the $n$-dimensional polytope in $\mathbb{R}^P$, whose vertices are 
given by the points $\{\mathbbm{1}_I \mid I \in \mathcal{J}(P)\}.$  The volume of $\mathcal{O}(P)$ is equal to 
$|\mathcal{L}(P)|/n!$, and the facets of $\mathcal{O}(P)$ are indexed by covers $e:=p \lessdot q$ of $P$; restricting to 
a facet $F_e$, we see that its volume is given by $|\mathcal{L}(P_e)|/(n-1)!$, where $P_e$ is $P$ with the edge $e$ contracted.  In other words, the volume of the facet $F_e$ counts the number of linear extensions of $P$ such that $L(p)+1=L(q)$.
For more details, see~\cite{stanley.1986}.

\begin{proof}[Proof of Corollary~\ref{cor:edges} (H.~Thomas)]
  Let $E$ be the set of covers $\{ p \lessdot q \mid p \in I, q \not \in I\}.$  Then the order polytope $\mathcal{O}(P)$ 
  decomposes as the union of the cones with apex given by the vertex $\mathbbm{1}_I$ over the facet $F_e$, for $e \in E$:
  \[ 
  	\mathcal{O}(P) = \bigcup_{e \in E} \mathsf{Conv}(\mathbbm{1}_I,F_e)\;.
  \]
  Since the volume of the order polytope is given by the number of linear extensions, and the cones all have height 
  one, taking volumes of the decomposition above gives:
  \begin{align*} 
  \frac{|\mathcal{L}(P)|}{n!} & = \mathsf{Vol}(\mathcal{O}(P)) 
  = \sum_{e \in E} \mathsf{Vol}(\mathsf{Conv}(\mathbbm{1}_I,F_e)) \\
  & = \sum_{e \in E} \frac{1}{n} \cdot \frac{|\mathcal{L}(P_e)|}{(n-1)!} 
  = \frac{1}{n!}\sum_{p \lessdot q \in E} \sum_{L \in \mathcal{L}(P)} \mathbbm{1}_{L(p)+1=L(q)}
  = \frac{1}{n!} \sum_{L \in \mathcal{L}(P)} |\des_I(L)|\;.
  \end{align*}
\end{proof}

\begin{remark}
It would be interesting if the previous proof could be refined to a bijection.
\end{remark}

It would be desirable to extend this geometric viewpoint to the previous parts of this paper. 

\begin{remark}
Is there a geometric proof of Theorem~\ref{theorem.homomesy}?  It is natural to interpret a braid hook as the 
codimension 2 face in the order polytope coming from the intersection of the two facets corresponding to the 
relevant edges.  The problem is to again come up with a decomposition of the order polytope by coning 
(now twice!) over all such faces.
\end{remark}

\bibliographystyle{alpha}
\bibliography{paper_reduced_words}

\end{document}